\newtheorem{thm}{Theorem}[section]
\newtheorem{cor}[thm]{Corollary}
\newtheorem{lem}[thm]{Lemma}
\newtheorem{prop}[thm]{Proposition}
\theoremstyle{definition}
\newtheorem{defn}[thm]{Definition}
\theoremstyle{remark}
\newtheorem{rem}[thm]{Remark}
\numberwithin{equation}{section}
\newcommand{\be}{\begin{equation}}
\newcommand{\ee}{\end{equation}}
\newcommand{\R}{\mathbb R}
\newcommand{\eps}{\varepsilon}
\newcommand{\p}{\partial}
\newcommand{\pa}{\partial}
\newcommand{\comment}[1]{}
\newenvironment{proof1}[1]{\begin{trivlist} \item[] {\em Proof of #1:}}{\newline \textcolor{white}{.}\hfill $\Box$
                      \end{trivlist}}
\begin{document}

\title[Two-phase problems]{Two-phase problems: Perron solutions and regularity  of the Neumann problem in convex cones}
\author{T. Beck}
\address{Mathematics Department, Fordham University, New York, NY }
\email{\tt  tbeck7@fordham.edu}
\author{D. De Silva}
\address{Department of Mathematics, Barnard College, Columbia University, New York, NY 10027}
\email{\tt  desilva@math.columbia.edu}
\author{O. Savin}
\address{Department of Mathematics, Columbia University, New York, NY 10027}\email{\tt  savin@math.columbia.edu}
\begin{abstract} We investigate a fully nonlinear two-phase free boundary problem with a Neumann boundary condition on the boundary of a general convex set $K \subset \R^n$ with corners. We show that the interior regularity theory developed by Caffarelli for the classical two-phase problem in his pioneer works \cite{C1,C2}, can be extended up to the boundary for the Neumann boundary condition under very mild regularity assumptions on the convex domain $K$. To start, we establish a general existence theorem for the Dirichlet two-phase problem driven by two different fully nonlinear operators, which is a result of independent interest.
 \end{abstract}

\maketitle

\section{Introduction}

We investigate a two-phase free boundary problem with a Neumann boundary condition on the boundary of a general convex set $K \subset \R^n$. We focus on the local properties of solutions near points on $\p K$. After a translation and dilation, we will assume throughout that $0 \in \p K$, and $K$ is an open convex set which intersects $\p B_1$:  
\begin{equation}\label{KK} 0 \in \p K,  \quad B_\delta (y) \subset K \quad \quad \mbox{for some} \quad y \in \p B_1, \delta>0.\end{equation}
In particular, $K \cap B_1$ is a Lipschitz domain with Lipschitz constant depending on the parameter $\delta$. We consider the two-phase free boundary problem in $K \cap B_1$ for  fully nonlinear operators $\mathcal F^\pm$ and a nonlinear transmission condition $G$, with a Neumann boundary condition on $\p K$, that is
 \begin{equation}\label{intro}
\left\{
\begin{array}{ll}
\mathcal{F}^+(D^{2}u)=0, & \hbox{in} \quad B_1^+(u):=\{ u>0 \} \cap B_1 \cap K,\\
\  &  \\
\mathcal{F}^-(D^{2}u)=0, & \hbox{in} \quad  B_1^-(u):= \{ u \le 0 \}^\circ \cap B_1 \cap K, \\
\  &  \\
|\nabla u^+| = G(|\nabla u^-|), & \hbox{on} \quad  F(u):= \partial_{\overline K} \{u>0\} \cap B_1,\\
\  &  \\
u_\nu=0, & \hbox{on} \quad \p K \cap B_1 \setminus F(u),\\
\end{array}
\right. 
\end{equation}
where $\p_{\overline K}$ denotes the boundary in $\overline K$.
Here $u \in C(\overline K \cap B_1)$ solves \eqref{intro} in the appropriate viscosity sense (see Section 2).
The operators $ \mathcal F^\pm$ are uniformly elliptic with ellipticity constants $\lambda, \Lambda$ and $ \mathcal  F^\pm(0)=0$, while $G:[0,\infty)\to [0,\infty)$ is a strictly increasing continuous function with $G(t) \to \infty$ as $ t \to \infty$.

The goal of the paper is to show that the interior regularity theory developed by Caffarelli for the classical two-phase problem in his pioneer works \cite{C1,C2}, can be extended up to the boundary for the Neumann boundary condition under very mild regularity assumptions on the convex domain $K$, and for a very general class of problems. The whole point is that we allow domains $K$ with corners - for simplicity, we focus on  conical domains that we decompose in the form $K_{n-m} \times \R^m$ with $K_{n-m} \subset \R^{n-m}$ a convex cone that does not contain a line.  In our main results we obtain estimates for solutions $u$ and their free boundaries which are independent of the smoothness of the cross-section cone $K_{n-m}$. 

For the classical two-phase minimization problem of  Alt, Caffarelli, and Friedman \cite{ACF}, the first author together with Jerison and Raynor \cite{BJR} showed the Lipschitz continuity of
 minimizers up to the fixed boundary under Neumann boundary conditions using an almost monotonicity formula. In addition to convexity, they require
a Dini condition governing the rate at which the fixed boundary converges to its limit
cone at each boundary point. The two dimensional case for the variational problem was already settled in \cite{GMR} for arbitrary convex domains, and so was the purely one-phase minimization problem \cite{R}. We remark that for the Dirichlet problem, Gurevich has shown that the Lipschitz property up to the boundary frequently fails \cite{G}.

Here we consider a much larger class of problems. Before we discuss the behavior of solutions near the fixed boundary $\p K$ where the Neumann condition holds, we first establish a general existence theorem for the standard Dirichlet  two-phase problem, which is a result of independent interest. The existence theory of a Lipschitz solution by Perron's method for operators in divergence form was first obtained by Caffarelli \cite{C3} and extended to the inhomogeneous case by De Silva, Ferrari, and Salsa  \cite{DFS2}. The case of fully nonlinear concave operators was settled by P.Y. Wang \cite{W}  and generalized to the case of equations with non-zero right hand side by Salsa, Tulone, and Verzini \cite{STV}. All these works rely on the powerful Alt-Caffarelli-Friedman (ACF) monotonicity formula \cite{ACF} and on its variants \cite{MP}. In this paper, we avoid the use of the ACF monotonicity formula and establish continuity properties of the Perron's solution based on the ideas in \cite{DS}. In this way we show the existence of viscosity solutions for a general class of two-phase problems involving possibly different operators in the two different phases.

Precisely, let $\mathcal F^\pm$ be fully nonlinear uniformly elliptic operators with the same ellipticity constants and $\mathcal F^\pm(0)=0$,  and let $G$ be as above. Consider the two-phase problem (we refer the reader to the next section for the notion of viscosity solutions),
\begin{equation}
\left\{
\begin{array}{ll}
\mathcal F^+ (D^2 u^+)=0, & \hbox{in} \quad B_1^+(u):=\{ u>0 \} \cap B_1,\\
\  &  \\
\mathcal F^- (D^2 u^-)=0, & \hbox{in} \quad  B_1^-(u):= \{ u \le 0 \}^\circ \cap B_1, \\
\  &  \\
|\nabla u^{+}|=G(|\nabla u^{-}|), &
\hbox{on} \quad  F(u):= \partial B_1^+(u) \cap B_1,\\
\end{array}
\right.  \label{fb}
\end{equation}
subject to Dirichlet boundary conditions $$u = \varphi \quad \mbox{on} \quad \partial B_1,$$
with $\varphi$ a continuous function. Define the family of continuous (viscosity) supersolutions 
$$ \mathcal A:=\left\{v \in C(\overline {B}_1) | \quad \mbox{$v$ is a supersolution}, \quad v \ge \varphi \quad \mbox{on} \quad \p B_1  \right\}.$$
Notice that $\mathcal A \ne \emptyset$ since it contains all large constant functions and $v \ge - \|\varphi\|_{L^\infty}$. 
Then, we define the least supersolution
$$ u(x)= \inf_{v \in \mathcal A} v(x), \quad x \in \overline B_1.$$

Our first main theorem states that the least supersolution solves our two-phase problem.

\begin{thm}\label{main}
$u \in C(\overline {B}_1)$ solves \eqref{fb} in the viscosity sense.
\end{thm}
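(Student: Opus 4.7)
The plan is to run Perron's method as in \cite{C3,W}, replacing the ACF monotonicity formula by the perturbation strategy of \cite{DS}. Four properties must be established for the least supersolution $u$: continuity on $\overline B_1$, attainment of the Dirichlet data on $\p B_1$, the interior equations $\mathcal F^\pm(D^2u)=0$ in each phase, and the transmission condition on $F(u)$.

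First, $u$ is automatically upper semicontinuous as an infimum of continuous functions, while the lower semicontinuous envelope $u_*$ is a supersolution by the usual stability of viscosity supersolutions under pointwise infima. Using standard barriers at $\p B_1$ built from the Pucci extremal operators $\mathcal M^\pm$ that dominate $\mathcal F^\pm$, one obtains $u_*=\varphi$ on $\p B_1$, so $u_*\in\mathcal A$ and hence $u=u_*$. The interior equations in the two phases then follow from the classical bump argument: if a smooth $\phi$ touched $u$ from above strictly at some $x_0\in\{u>0\}\cap B_1$ with $\mathcal F^+(D^2\phi(x_0))>0$, the function $\min(u,\phi-\eps)$ would be a supersolution strictly below $u$ near $x_0$, contradicting minimality; the symmetric argument handles $\{u\le 0\}^\circ$.

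The delicate point is the transmission condition on $F(u)$. The supersolution inequality at $F(u)$ is inherited from $u$ being itself a supersolution. For the reverse inequality, suppose by contradiction that some admissible two-phase test function $\phi$ touches $u$ from above at $x_0\in F(u)$ with $\alpha:=|\nabla\phi^+(x_0)|<G(|\nabla\phi^-(x_0)|)=:G(\beta)$. I would construct, in a small neighborhood of $x_0$, a two-phase barrier $w$ satisfying both PDEs and the transmission $|\nabla w^+|=G(|\nabla w^-|)$ with slopes $\alpha'\in(\alpha,G(\beta'))$, $\beta'\le\beta$, so that $w$ is a supersolution of \eqref{fb}. Arranging $w<\phi\le u$ near $x_0$ and $w\ge u$ on the boundary of a small patching region, the function equal to $\min(u,w)$ inside and $u$ outside would be a supersolution in $\mathcal A$ strictly smaller than $u$ at $x_0$, a contradiction.

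The main obstacle is precisely this barrier construction and its gluing. One must exhibit a two-phase configuration with prescribed slopes satisfying both fully nonlinear equations \emph{and} the transmission condition, and patch it to $u$ without destroying the supersolution property across the patching surface. Following \cite{DS}, I would build $w$ as a perturbation of an explicit one-dimensional two-phase profile, then tilt and bend it using the strict monotonicity and continuity of $G$ together with the uniform ellipticity of $\mathcal F^\pm$ to absorb lower-order errors via Pucci-type comparison. Replacing the ACF formula, the technical core would be a quantitative separation estimate of Harnack type at the free boundary, showing that when the subsolution inequality fails, $u$ necessarily leaves room below the test function for such a $w$ to fit.
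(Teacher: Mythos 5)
Your overall architecture is right (boundary data via explicit barriers, interior equations by the standard bump argument, supersolution and subsolution inequalities on $F(u)$ by separate contradiction arguments), but there are two genuine gaps, and the first is fatal as stated.

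\textbf{Continuity is not automatic.} You assert that the lower semicontinuous envelope $u_*$ is a supersolution ``by the usual stability of viscosity supersolutions under pointwise infima,'' and from there conclude $u=u_*\in C(\overline{B}_1)$. This is exactly what fails for a two-phase free boundary problem: the supersolution property on $F(v)$ is a condition on a $v$-dependent set, so the standard infimum-stability argument for PDE supersolutions does not apply. Establishing continuity of the Perron infimum (without the ACF monotonicity formula) is precisely the point of this theorem, and it is where all the technical work lies. The paper does this in two steps: assuming $u(0)=\sigma>0$, one modifies a competitor $v\in\mathcal A$ inside $\{v>\sigma/2\}$ so that $\mathcal F^+(D^2v)=0$ there, observes that $v^-$ and $(v-\sigma/2)^+$ are then subsolutions of $\mathcal M^+_{\lambda/n,\Lambda}$ with disjoint supports, and runs a dyadic weak Harnack iteration: in each dyadic ball one of the two functions must decay geometrically. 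If $(v-\sigma/2)^+$ decays often enough one gets $\sigma\le Cd^\alpha$; otherwise $v^-$ decays superlinearly ($\le Cd^{3/2}$) near the free boundary, and then the radial barriers \eqref{Fsub} produce a violation of the free boundary condition with $a\gtrsim d^{\alpha-1}\gg G(b)$, $b\lesssim d^{1/2}$. This quantitative ``Harnack at the free boundary'' idea is the engine behind continuity and $\mathcal F^+(D^2u)=0$ in $\{u>0\}$ (and, symmetrically, the negative phase). Your proposal names such an estimate in the last sentence but assigns it to the wrong step: it is needed to make $u$ a continuous function at all, not to prove the subsolution property on $F(u)$.

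\textbf{The subsolution step is over-engineered, and its ``main obstacle'' is avoidable.} You propose to build, near a touching point $x_0\in F(u)$, an exact two-phase barrier $w$ solving both equations and the transmission condition with intermediate slopes, and then glue. Constructing such a $w$ for general fully nonlinear $\mathcal F^\pm$ is hard and unnecessary. Because the definition of viscosity solution already allows one to normalize the test function $\phi$ so that $a<G(b)$ \emph{and} $\mathcal M^+_{\lambda/n,\Lambda}(D^2\phi(x_0))<0$ (condition 2')), one can simply set $\tilde\phi:=\phi+\eps(|x-x_0|^2-r^2)$; for small $\eps$ this is still a strict PDE supersolution near $x_0$, agrees with $\phi$ on $\p B_r(x_0)$, and satisfies $\tilde\phi(x_0)<0$. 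Then $v:=\min\{u,\,a\tilde\phi^+-b\tilde\phi^-\}$ in $B_r(x_0)$, extended by $u$ outside, lies in $\mathcal A$ (minimum of two supersolutions, with the free boundary supersolution condition automatic since $a<G(b)$). Minimality gives $u\le v\le 0$ near $x_0$, contradicting $x_0\in\p\{u>0\}$. Note the contradiction is with the location of the free boundary, not directly with the slope inequality, so no new exact solution of \eqref{fb} is ever needed. Similarly, the supersolution inequality on $F(u)$ is not ``inherited'' for free: one must argue (as the paper does) that a violating test function touching $u$ from below, after normalizing via condition 1'), would also touch an approximating $v_n\in\mathcal A$ at a point forced by 1') to lie on $F(v_n)$, contradicting $v_n$ being a supersolution. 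This uses continuity of $u$ and the strict-subsolution normalization in an essential way.

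In short: the gluing idea in your last paragraph is on the right track, but the hard content is the dyadic weak-Harnack continuity argument (which your proof skips by a false stability claim), and the subsolution step should use a simple quadratic perturbation of the test function rather than an exact two-phase barrier.
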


Going back to our Neumann problem \eqref{intro}, after establishing existence of the Neumann-Dirichlet problem with the strategy developed in Theorem \ref{main}, we proceed to analyze the main question of Lipschitz regularity up to the boundary. For this purpose, we require that $\mathcal F^+=\mathcal F^-$. While we could consider the same class of convex sets as in \cite{BJR}, to fix ideas we focus only on conical domains.  
Recall that any convex cone $K$ can be split (after a rotation) as 
\begin{equation}\label{cki}
K=K_{n-m} \times \R^m,
\end{equation} with $K_{n-m}$ a convex cone in $\R^{n-m}$ satisfying,
$$ K_{n-m} \subset \{x_1 \ge \delta |(x_1,...,x_{n-m})| \} \subset \R^{n-m}, \quad \mbox{for some $\delta>0$.}$$
For $m<n-1$, this ensures that $K_{n-m}$ is strictly included in a half-space, while the case when $m=n-1$ and $K_1=(0,\infty)$ corresponds to the standard half-space situation. 

Our main result is stated below.

 \begin{thm}[Lipschitz regularity] \label{main2}
Assume $u$ solves \eqref{intro} with $K$ satisfying \eqref{KK} and \eqref{cki}, and $$\mathcal F^+=\mathcal F^-, \quad \mbox{and} \quad G'(t) \to 1 \quad \mbox{ as} \quad t \to \infty.$$ If $0 \in \p K\cap F(u)$ then, for $x\in \overline K\cap B_1$, $$|u(x)| \le C (\|u\|_{L^\infty(K\cap B_1)} +1) |x|,$$
with $C$ depending on $n$, $\lambda$, $\Lambda$, $\delta$ and $G$.
\end{thm}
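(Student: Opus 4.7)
The plan is to argue by contradiction via a compactness/blow-up scheme whose analytic core is a Liouville-type rigidity for the limiting homogeneous Neumann problem on the full cone $K$. Normalize $\|u\|_{L^\infty(K\cap B_1)}\le 1$, and suppose the claimed bound fails. Then one finds a sequence of solutions $u_k$ of \eqref{intro} with $\|u_k\|_{L^\infty(K\cap B_1)}\le 1$ and scales $r_k\to 0$ such that $L_k := r_k^{-1}\sup_{K\cap B_{r_k}}|u_k|\to\infty$. Via a Vitali-type doubling selection one may arrange that $r^{-1}\sup_{K\cap B_r}|u_k|\le L_k$ for all $r\in[r_k,1]$, which yields linear growth control on the blow-ups at large scales.

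Define the rescalings $v_k(x):=u_k(r_k x)/(r_k L_k)$ on $K\cap B_{1/r_k}$. Because $K=K_{n-m}\times\R^m$ is a cone (hence invariant under dilations) and both $\mathcal F^\pm$ and the Neumann condition are scale invariant, each $v_k$ solves the same Neumann two-phase problem on $K\cap B_{1/r_k}$, with rescaled transmission
$$|\nabla v_k^+| = L_k^{-1}\,G\!\left(L_k|\nabla v_k^-|\right) \quad \text{on } F(v_k),$$
and with $v_k(0)=0$, $\sup_{K\cap B_1}|v_k|=1$, and $\sup_{K\cap B_R}|v_k|\le R$ for $R\in[1,1/r_k]$. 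The interior and boundary continuity estimates for the Neumann two-phase problem (established earlier in the paper) give local equicontinuity of $\{v_k\}$ on $\overline{K}$; passing to a subsequence yields a locally uniform limit $v_\infty\in C_{\mathrm{loc}}(\overline{K})$ with $v_\infty(0)=0$, $\sup_{K\cap B_1}|v_\infty|=1$, and $|v_\infty(x)|\le 1+|x|$. By stability of viscosity solutions, $\mathcal F^\pm(D^2 v_\infty)=0$ in $\{v_\infty>0\}$ and $\{v_\infty\le 0\}^\circ$, and $(v_\infty)_\nu=0$ on $\partial K\setminus F(v_\infty)$. The assumption $G'(t)\to 1$ as $t\to\infty$ forces $L^{-1}G(Ls)\to s$ uniformly on compacts of $[0,\infty)$, so the transmission degenerates in the limit to $|\nabla v_\infty^+|=|\nabla v_\infty^-|$; combined with $\mathcal F^+=\mathcal F^-=\mathcal F$, this says $v_\infty$ has no genuine free boundary and is a global viscosity solution of $\mathcal F(D^2 v_\infty)=0$ on $K$ with homogeneous Neumann data on $\partial K$.

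The main obstacle is the Liouville classification: I expect to prove that every such $v_\infty$ with at most linear growth and $v_\infty(0)=0$ is of the form $v_\infty(x)=b\cdot x''$, where $x''=(x_{n-m+1},\dots,x_n)$ are the flat directions. The plan for this step is (i) use translation invariance in $x''$ together with $C^{1,\alpha}$-type boundary estimates for $\mathcal F$-harmonic functions with Neumann data on convex cones to justify differentiating $v_\infty$ in the $x''$ variables, yielding bounded $\mathcal F$-harmonic-type functions on $K$ with homogeneous Neumann data; (ii) reduce to showing that bounded $\mathcal F$-harmonic Neumann functions on the proper convex cone $K_{n-m}\subset\{x_1\ge\delta|x'|\}$ are constant, via a quantitative Hopf/barrier construction on the cross-section $K_{n-m}\cap S^{n-m-1}$ depending only on $\delta$; (iii) integrate back to conclude $v_\infty$ is affine in $x''$ alone, and invoke $v_\infty(0)=0$ to pin down the form. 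The crux is the quantitative cross-sectional estimate: for a proper convex cone with $K_{n-m}\subset\{x_1\ge\delta|x'|\}$, the first positive homogeneous Neumann exponent of the extremal Pucci operators must exceed $1$, with a lower bound depending only on $n,\lambda,\Lambda,\delta$.

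Once Liouville is in hand, $v_\infty(x)=b\cdot x''$ with $|b|\asymp 1$ by the normalization $\sup_{K\cap B_1}|v_\infty|=1$. Locally uniform convergence $v_k\to v_\infty$ on balls $B_\rho\cap K$ for every fixed $\rho>0$ then yields, back in the original frame, that $r^{-1}\sup_{K\cap B_r}|u_k|$ is controlled by $C L_k$ at all scales $r$ comparable to $r_k$; tracking this through the doubling selection contradicts $L_k\to\infty$. The main technical difficulty is therefore the Liouville step on the non-smooth convex cone for fully nonlinear $\mathcal F$, which in turn hinges on quantitative Hopf and barrier estimates, uniform in the ellipticity class $(\lambda,\Lambda)$ and depending only on $\delta$.
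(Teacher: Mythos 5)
Your blow-up-plus-Liouville scheme is genuinely different in architecture from the paper's proof, which proceeds via a scale-by-scale dichotomy (either the rescaled $L^\infty$-norm $r^{-1}\|u\|_{L^\infty(K\cap B_r)}$ decays at the next dyadic scale, or $u$ is $\eps$-close to a two-plane solution $U_b=a(x\cdot\omega)^+-b(x\cdot\omega)^-$), followed by an iterated improvement-of-flatness lemma in the flat case (Lemma~\ref{impf} and Proposition~\ref{prop:improvement}). Many of your ingredients are consonant with the paper: the H\"older equicontinuity (Lemma~\ref{lem:Holder}), the degeneration $L^{-1}G(L\,\cdot)\to \mathrm{id}$ and the resulting passage to a pure Neumann limit (Lemma~\ref{lem:compactness} and Corollary~\ref{cormain}), and your ``Liouville classification'' is, in essence, a global restatement of Theorem~\ref{T08}: any Neumann-harmonic function on the cone $K=K_{n-m}\times\R^m$ with linear growth must be of the form $\tau_m\cdot x$ with $\tau_m\in\{0\}\times\R^m$, obtained by iterating Lemma~\ref{L07} over all scales.

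The genuine gap is in the final contradiction. After the Vitali-type doubling selection you arrive at a normalized blow-up $v_\infty$ solving the homogeneous Neumann problem with $v_\infty(0)=0$, $\sup_{K\cap B_1}|v_\infty|=1$, and linear growth, and the Liouville step gives $v_\infty(x)=b\cdot x''$ with $|b|$ comparable to $1$. But this is \emph{not} a contradiction: a nonzero linear function is a perfectly admissible blow-up at a free-boundary point with Neumann data (it is the two-plane solution $U_{|b|}$ with $a=b$ and $G=\mathrm{id}$, exactly the object that appears in option~(ii) of Lemma~\ref{lem:dichotomy}). Locally uniform convergence $v_k\to b\cdot x''$ on a fixed ball $B_\rho$ only says $r^{-1}\sup_{B_r\cap K}|u_k|\lesssim |b|L_k\le L_k$ for $r\sim \rho\, r_k$ — which is precisely what the doubling selection already guaranteed, not its negation, and it gives no control at scales $r\ll r_k$. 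The mechanism that propagates the Lipschitz bound from one scale down to all smaller scales is the improvement-of-flatness lemma (the approximation error against a two-plane solution decays geometrically in $r$ while the slopes $a,b,\omega$ converge), and that lemma is what the paper spends Section~5 (transmission estimates, Theorem~\ref{T09}) and Lemma~\ref{impf} building. Without some version of it, the compactness/Liouville argument alone cannot close: it establishes that blow-up limits are linear, but does not rule out that the ``distance to a linear function'' fails to decay as one zooms in, in which case the Lipschitz norm can still blow up. To repair your scheme you would need to replace the claimed contradiction with a Harnack-inequality-based improvement-of-flatness step, iterated at dyadic scales, which is exactly the paper's route.
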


We also refer to Theorem \ref{thm:Lipschitz} for a more general version of Theorem \ref{main2}.

The strategy to prove Theorem \ref{main2} is driven by the results in \cite{DS}. The heuristic is that in the regime of “big gradients” the free boundary condition becomes a continuity (no-jump) condition for the gradient. Then, the Lipschitz estimate follows from the $C^{1,\alpha}$ regularity up to the boundary for fully nonlinear elliptic equations with Neumann boundary condition in conical domains, which we establish here and which constitutes a result of independent interest.

The paper is organized as follows. In Section 2 we present the definition of viscosity solution to \eqref{intro} and prove basic properties, such as stability, and provide examples which support and motivate our definition. In the following section, we provide the proof of Theorem \ref{main} and make some important remarks about generalizations and stronger results. Section 4 deals with the standard Neumann problem for a fully nonlinear operator in a conical domain - a $C^{1,\alpha}$ regularity result up to the boundary is established. In Section 5, we investigate the transmission problem, which occurs as the linearization of our two-phase problem. Finally, in the last section, we present the proof of Theorem \ref{main2}, relying on the estimates for the transmission problem and a linearization technique which has a Harnack inequality as its main tool.

\section{Preliminaries and definitions}

This section contains the definition of viscosity solution for the different types of problems which we will deal with in the rest of the papers. Other basic definitions, some straightforward properties, and some motivating examples are also included. 

\subsection{Preliminaries on Fully Nonlinear Operators} First, we recall some standard facts about fully nonlinear uniformly elliptic operators. For a comprehensive treatment of the topic, we refer the reader to \cite{CC}.

We say that  $\mathcal F$ is a fully nonlinear uniformly elliptic operator 
if there exist $0<\lambda\leq \Lambda$   positive constants such that for every  $M, N\in \mathcal{S}^{n\times n},$  with  $N\geq 0,$
$$
\lambda\| N\|\leq \mathcal F(M+N)-\mathcal F(M)\leq \Lambda \| N\|,
$$
where $\mathcal{S}^{n\times n}$ denotes the set of real  $n\times n$ symmetric matrices. We write $N \geq 0,$ whenever $N$ is non-negative definite. Also, $\|M\|$ denotes the $(L^2,L^2)$-norm of $M$, that is $\|M\|= \sup_{|x|=1} |Mx|$. 

From now on, the class of all fully nonlinear uniformly elliptic operators with ellipticity constants $\lambda,\Lambda$ and such that $\mathcal F(0)=0$ will be denoted by $\mathcal{E}(\lambda, \Lambda).$

We recall the definition of the extremal Pucci operators, $\mathcal M^-_{\lambda,\Lambda}$ and $\mathcal M^+_{\lambda,\Lambda}$: for $0<\lambda \leq \Lambda,$ we set
$$\mathcal{M}^-_{\lambda,\Lambda}(M) = \lambda \sum_{e_i >0} e_i + \Lambda \sum_{e_i <0} e_i,$$
$$\mathcal{M}^+_{\lambda,\Lambda}(M) = \Lambda \sum_{e_i >0} e_i + \lambda \sum_{e_i <0} e_i,$$ with  the $e_i=e_i(M)$ the eigenvalues of $M$.

If $\mathcal{F} \in \mathcal E(\lambda,\Lambda)$ then\begin{equation*}
\mathcal{M}^-_{\frac{\lambda}{n},\Lambda}(M)\leq \mathcal{F}(M)\leq \mathcal{M}^+_{\frac{\lambda}{n},\Lambda}(M).
\end{equation*} 

Finally, it is readily verified  that if $\mathcal{F} \in \mathcal E(\lambda,\Lambda)$ is the rescaling operator defined by
$$
\mathcal{F}_r(M)=\frac{1}{r}\mathcal{F}(rM), \quad r>0
$$
then $\mathcal F_r$ is still an operator in our class $\mathcal E(\lambda,\Lambda).$

 \smallskip
 
\subsection{Definition and properties of viscosity solutions.} We now introduce the definition of viscosity solutions for a uniformly elliptic equation. Here and henceforth we consider operators $\mathcal{F} \in \mathcal E(\lambda,\Lambda)$. First we recall some standard notions.

Given $u, \varphi \in C(\Omega)$, we say that $\varphi$
touches $u$ by below (resp. above) at $x_0 \in \Omega$ if $u(x_0)=
\varphi(x_0),$ and
$$u(x) \geq \varphi(x) \quad (\text{resp. $u(x) \leq
\varphi(x)$}) \quad \text{in a neighborhood $O$ of $x_0$.}$$ If
this inequality is strict in $O \setminus \{x_0\}$, we say that
$\varphi$ touches $u$ strictly by below (resp. above).

\subsubsection{The interior equation.}\label{interior} If $v \in C^2(O)$, with $O$ an open subset in $\R^n,$ satisfies  $$\mathcal F(D^2 v) > 0  \  \ \ (\text{resp}. <0)\quad \text{in $O$,}$$ we call $v$ a (strict) classical subsolution (resp. supersolution) to the equation $\mathcal F(D^2 v) = 0 $ in $O$.

\begin{defn}\label{intdef}Let $\Omega$ be a domain in $\R^n.$ We say that $u \in C(\Omega)$ is a viscosity solution to $$\mathcal F(D^2 u) = 0  \quad \text{in $\Omega$,}$$ if $u$ cannot be touched by  below (resp. above) at an interior point $x_0 \in \Omega$ by a strict classical subsolution (resp. supersolution) in a neighborhood $O$ of $x_0$. 
\end{defn}

The following class of functions plays a fundamental role in the regularity theory for fully nonlinear equations.

\begin{defn}{[The $\mathcal S_\Lambda$ class.]}  We say that
$u \in \underline {\mathcal S}_\Lambda(B_1)$
if $u \in C(B_1)$ is a viscosity subsolution to
$$ \mathcal M_{\frac \lambda n, \Lambda}^+ (D^2 u) \ge 0 \quad \mbox{in} \quad B_1.$$
Similarly, we can define $\overline{\mathcal S}_\Lambda (B_1)$ by considering supersolutions and $\mathcal M_{\frac \lambda n, \Lambda}^-$. Then $$\mathcal S_\Lambda (B_1) = \underline{\mathcal S}_\Lambda (B_1)\cap \overline{\mathcal S}_\Lambda (B_1).$$\end{defn}

\subsubsection{The Neumann Problem.} From now on, whenever we formulate a Neumann problem, we assume that $K$ is an open convex set and satisfies \eqref{KK}. We consider the problem,
\begin{equation}
\left\{
\begin{array}{ll}
\mathcal F(D^2 u)=0, & \hbox{in} \quad  K \cap B_1,\\
\  &  \\
u_\nu=0, & \hbox{on} \quad \p K \cap B_1.\\
\end{array}
\right.  \label{npb0}
\end{equation}

Throughout this paper we denote by $\nu$ the inner unit normal to $K$.

A function  $ u \in C(B_1 \cap \overline K)$ is a viscosity solution to \eqref{npb0} if it satisfies the interior equation according to Definition \ref{intdef} and it satisfies the boundary condition according to the following definition.

\begin{defn}\label{defNR} $ u \in C(B_1 \cap \overline K)$ satisfies
$$u_\nu \ge 0 \quad \text{(resp $\leq 0$)} \quad \mbox{on $\p K$,}$$ if $u$ cannot be touched locally by above (resp. below) at some point $x_0 \in \p K \cap B_1$ by a $C^2$ function $\psi$ that satisfies
$$ \psi_{\nu_0} (x_0) <0, \quad \text{(resp. $>0$)}$$
for all unit directions $\nu_0$ which are normal to a supporting hyperplane at $x_0$ - meaning that $\nu_0 \cdot (x-x_0) >0$ (resp. $<0$) for each $x \in K$.

Notice that the condition $\psi_{\nu_0} (x_0) <0$ for the normal derivatives of $\psi$ at $x_0$ can be stated as 

\smallskip
a) $- \nabla \psi(x_0)$ is interior to the tangent cone $K_{x_0}$ of $K$ at $x_0$.
\end{defn}

Also,  we may restrict to the class of functions $\psi$ that touch $u$ strictly by above at $x_0$ and satisfy 
\smallskip

b) $\mathcal M_{\frac\lambda n ,\Lambda}^+ (D^2 \psi(x_0) )<0$. 

\smallskip

Otherwise, we add to the original $\psi$ a term like $\eps (t + |x-x_0| ^2)- M t^2$ with $t = (x-x_0) \cdot \nu_0$ for some $\nu_0$.

\smallskip

This definition behaves well under limits since the conditions above continue to hold in a neighborhood of $x_0$, and for perturbations of the set $K$ as well. One can easily establish the following stability result. 

\begin{prop}[Stability] \label{stb} Let $u_m$ be viscosity solutions to \eqref{npb0} for $\mathcal F_m \in \mathcal E(\lambda, \Lambda)$ in $K_m\cap B_1$, with $K_m$ satisfying \eqref{KK}. If $$u_m \to u, \quad \p K_m \to \p K, \quad \mathcal F_m \to \mathcal F \quad \mbox{ uniformly on compact sets,}$$
 then $u$ is a viscosity solution to \eqref{npb0} for $\mathcal F$ in $K\cap B_1$.
\end{prop}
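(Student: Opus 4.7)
My plan is to run the standard viscosity stability argument adapted to the Neumann setting, splitting according to whether the touching point lies in the interior of $K$ or on $\p K$. Fix a $C^2$ test function $\psi$ strictly touching $u$ from above at some $x_0 \in \overline K \cap B_1$; the goal is to show that $\psi$ is not admissible in the viscosity inequality for $\mathcal F$ on $K$, yielding the supersolution property at $x_0$ (the subsolution property, i.e.\ strict touching from below, is symmetric). Uniform convergence $u_m \to u$ on compact sets together with Hausdorff convergence $\p K_m \to \p K$ produces local maxima $x_m$ of $u_m - \psi$ on $\overline{K_m} \cap \overline{B_r(x_0)}$, and strict touching forces $x_m \to x_0$.

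When $x_0 \in K$, for $m$ large $x_m$ lies in the interior of $K_m$ and the standard interior stability applies. By the remark following Definition \ref{defNR} (or a direct perturbation in Definition \ref{intdef}) I may assume $\mathcal M^+_{\lambda/n,\Lambda}(D^2 \psi(x_0)) < 0$, so by the Pucci bound and continuity $\mathcal F(D^2 \psi) \le -c$ in a neighborhood of $x_0$. Uniform convergence $\mathcal F_m \to \mathcal F$ upgrades this to $\mathcal F_m(D^2 \psi) \le -c/2$ near $x_m$, making $\psi$ a strict classical supersolution that touches $u_m$ from above and contradicting the viscosity property of $u_m$.

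When $x_0 \in \p K$, the test function additionally satisfies $-\nabla \psi(x_0) \in \mathrm{int}\, K_{x_0}$. If a subsequence of the $x_m$ lies in $K_m$, the interior argument above already gives a contradiction; otherwise $x_m \in \p K_m$, and I need to verify that $\psi_{\nu_m}(x_m) < 0$ for every unit supporting normal $\nu_m$ to $K_m$ at $x_m$, which would contradict the Neumann condition for $u_m$. Because the set of unit supporting normals to $K$ at $x_0$ is compact, the interior condition on $-\nabla \psi(x_0)$ upgrades to a uniform strict bound $-\nabla \psi(x_0) \cdot \nu_0 > \eps$ for all such $\nu_0$. I would then invoke the convex-analytic fact that under $\p K_m \to \p K$ and $x_m \to x_0$, every supporting normal $\nu_m$ to $K_m$ at $x_m$ is, along a subsequence, a limit of supporting normals to $K$ at $x_0$; combined with the $C^1$ smoothness of $\psi$, this yields $\nabla \psi(x_m) \cdot \nu_m < -\eps/2$ for $m$ large, uniformly in the choice of $\nu_m$.

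The main technical obstacle I anticipate is making this upper semicontinuity of normal cones under Hausdorff convergence of convex boundaries fully rigorous, especially at corner points where the set of supporting normals is a higher-dimensional cone; this is classical convex analysis but deserves some care. Once it is in place, the supersolution direction is complete, and the mirror argument (with $\mathcal M^-$ in place of $\mathcal M^+$, and the roles of above/below and of the tangent-cone condition reversed) delivers $u_\nu \le 0$, giving the full boundary condition $u_\nu = 0$ and hence the stability statement.
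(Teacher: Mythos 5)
Your overall strategy matches the paper's: argue by contradiction with a test function $\psi$ touching $u$ at $x_0$, use uniform convergence to produce touching points $x_m \to x_0$ for (vertical translates of) $\psi$ against $u_m$, invoke condition~b) to rule out interior touching, and then show the gradient condition for $\psi$ is transmitted to the touching points $x_m \in \p K_m$. The one place where you diverge is the final step. The paper works directly with the tangent-cone formulation~a): it observes that ``$v$ interior to $K_{x_0}$'' is an open condition that persists under small perturbations of both the vector and the convex set, so $-\nabla\psi(x_m)$, being close to the interior direction $-\nabla\psi(x_0)$, is automatically interior to $K_{m,x_m}$ for large $m$. You instead pass to the dual (normal-cone) formulation and invoke upper semicontinuity of supporting normals under Hausdorff convergence, plus a compactness argument to get a uniform gap $\eps$. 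Your route is correct — the upper semicontinuity of normal cones you flag as the delicate point is indeed true and follows by exactly the limiting argument you sketch — but it requires slightly more bookkeeping at corner points, whereas the paper's tangent-cone formulation makes the persistence of the strict inequality essentially automatic. This is why the paper records both characterizations~a) and~b) in Definition~\ref{defNR}: condition~a) is the one tailored for stability arguments, and using it directly sidesteps the uniform-normal-estimate you had to establish. Both proofs are valid; if you wanted to streamline yours, replacing the normal-cone step by the tangent-cone observation would do it.
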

\begin{proof} The standard theory of viscosity solutions implies that the equation $\mathcal F(D^2 u)=0$ holds in $K \cap B_1$, see \cite{CC}. Next, we check that $u_\nu=0$ is satisfied on $\p K \cap B_1$. 

Assume by contradiction that $\psi \in C^2_{loc}$ touches $u$ by above at some $x_0 \in \p K \cap B_1$, and $\psi$ satisfies conditions a) and b) in the Definition \ref{defNR} above. Since the graphs of $u_m$ converge to the graph of $u$ in the Hausdorff distance sense, there are constants $\sigma_m \to 0$ such that $\psi+\sigma_m$ touches $u_m$ by above at some point $x_m$, and $x_m \to x_0$. Condition b) guarantees that $x_m$ must be on $\p K_m$. By continuity $-\nabla \psi(x_m)$ is interior to $K_{x_0}$ and therefore interior to $ K_{m,x_m}$ (the tangent cone to $K_m$ at $x_m$) for all large $m$'s, and we contradict that $\p_\nu u_m(x_m) \ge 0$. The case of $\psi$ touching $u$ by below follows analogously.
\end{proof}

\begin{rem}We provide here an example of a viscosity Neumann problem arising from a variational formulation. Let $u$ minimize the $p$-Laplace energy
$$J(u):=\int_{K \cap B_1} |\nabla u|^p dx, \quad \quad p \in (1,\infty),$$
subject to Dirichlet boundary data on $\p B_1 \cap K$. The comparison principle holds with respect to functions $\psi \in C^2$ that have property a) in Definition \ref{defNR} and satisfy
$$ div (\nabla \psi |\nabla \psi|^{p-1}) <0.$$
Indeed, for $v \ge 0$, $v \in W^{1,p}( K \cap B_1)$, $v = 0$ on $\p B_1$, by the convexity of $J$ and by integration by parts we have $$J(\psi +v) - J(\psi) \ge \int_{\p (B_1 \cap K)} v \, \, p |\nabla \psi|^{p-1} (-\nabla \psi) \cdot \nu \,  dx \ge 0,$$
with equality only if $v \equiv 0$. Applying this inequality to $v=(u+\eps-\psi)^+$, with $\eps>0$ sufficiently small to ensure $v=0$ on $\p B_1$ we conclude that:
$$J(u) - J(\min\{u+\eps, \psi\}) = J(\max\{u+\eps, \psi\}) - J(\psi) \geq 0,$$
contradicting that $u$ is minimizer.
\end{rem}

\subsubsection{The two-phase problem.} Let $\mathcal{F}^\pm \in \mathcal E(\lambda, \Lambda).$
Consider the two-phase problem, 

\begin{equation}
\left\{
\begin{array}{ll}
\mathcal{F}^+ (D^2 u)=0, & \hbox{in} \quad B_1^+(u):=\{ u>0 \} \cap B_1,\\
\  &  \\
\mathcal F^- (D^2 u)=0, & \hbox{in} \quad  B_1^-(u):= \{ u \le 0 \}^\circ \cap B_1, \\
\  &  \\
|\nabla u^{+}|=G(|\nabla u^{-}|), &
\hbox{on} \quad  F(u):= \partial B_1^+(u) \cap B_1,\\
\end{array}
\right.  \label{fb1}
\end{equation}
with $G:[0,\infty)\to [0,\infty)$ a strictly increasing continuous function with $G(t) \to \infty$ as $ t \to \infty$. 

A function $u \in C(B_1)$ is a viscosity solution to \eqref{fb1} if the interior equations hold is the sense of Definition \ref{intdef} and the free boundary condition holds in the following sense.

\begin{defn} \label{VSFB} $u\in C(B_1)$ satisfies $$|\nabla u^{+}|=G(|\nabla u^{-}|), \quad \text{on $F(u),$}$$ if at $x_0 \in F(u)$, $u$ cannot be touched locally by functions of the type
$$ a \phi^+ - b \phi^-, \quad \quad \mbox{with} \quad \phi \in C^2, \quad |\nabla \phi(x_0)|=1, \quad a, b \ge 0,$$

1) by below if $a > G(b)$ (supersolution property on $F(u)$); 

2) by above if $ a < G(b)$ (subsolution property on $F(u)$).

\end{defn}

Notice that since $x_0 \in \partial \{u>0\}$ we must have $a>0$ in case 2).

Without loss of generality, we may assume in the definition above that the function $\phi$ satisfies the additional conditions

1')  $\mathcal M_{\frac \lambda n,\Lambda}^-(D^2 \phi(x_0)) >0$ in case 1);

2') $\mathcal M_{\frac \lambda n,\Lambda}^+(D^2 \phi(x_0)) <0$ in case 2). 

This is achieved by adding a large multiple of $\pm d^2$ to $\phi$, where $d$ represents the distance to its zero level set $\{\phi=0\}$, and then modifying slightly the constants $a$ and $b$.

We say that $u \in C(B_1)$ is a supersolution (resp. subsolution) to \eqref{fb} if it is a viscosity supersolution (resp. subsolution) to $\mathcal{F}^\pm $ in $B_1^\pm$, and satisfies the supersolution (resp. subsolution) property on $F(u)$.

It is straightforward to check that the minimum (resp. maximum) of two viscosity supersolutions (resp. subsolutions) is a viscosity supersolution (resp. subsolution).

\subsubsection{The two-phase problem with Neumann Boundary condition.}\label{bo} Consider the two-phase problem in a convex set $K$ with Neumann boundary condition,
 \begin{equation}
\left\{
\begin{array}{ll}
\mathcal{F}^+(D^{2}u)=0, & \hbox{in} \quad B_1^+(u):=\{ u>0 \} \cap B_1 \cap K,\\
\  &  \\
\mathcal{F}^-(D^{2}u)=0, & \hbox{in} \quad  B_1^-(u):= \{ u \le 0 \}^\circ \cap B_1 \cap K, \\
\  &  \\
|\nabla u^{+}|=G(|\nabla u^{-}|), & \hbox{on} \quad  F(u):= \partial_{\overline K} \{u>0\}  \cap B_1,\\
\  &  \\
u_\nu=0, & \hbox{on} \quad \p K \cap B_1 \setminus F(u).\\
\end{array}
\right.  \label{fbn1}
\end{equation}

A function $ u \in C(B_1 \cap \overline K) $ satisfies \eqref{fbn1} in the viscosity sense, if it satisfies the interior equations as in Definition \ref{intdef} and the free boundary condition on $F(u) \cap K$ as in Definition \ref{VSFB}. Finally, we specify the notion of viscosity solutions at points on the boundary $\p K \cap B_1$:

1) $u_\nu=0$ on $\p K \cap \{u>0\}$ and on $\p K \setminus \overline { \{u>0\}}$ as in Definition \ref{defNR};

2) on the set $Z:=\p K \cap \p \{u>0\}$, $|\nabla u^{+}|\ge G(|\nabla u^{-}|)$ means that:

\noindent $u$ cannot be touched locally by above (in $\overline K$) at some point $x_0 \in Z$ by test functions of the type
$$a \psi^+ - b \psi^-, \quad \quad \psi \in C^2, \quad |\nabla \psi(x_0)|=1, \quad a,b \ge 0,$$
with 
$$ a < G(b), \quad \quad \mathcal M_{\frac \lambda n,\Lambda}^+(D^2 \psi(x_0)) <0,$$
and $- \nabla \psi(x_0)$ interior to the tangent cone $K_{x_0}$ of $K$ at $x_0$.

The opposite inequality $|\nabla u^{+}| \le G(|\nabla u^{-}|)$ on $Z$ is defined similarly. 

\begin{rem} To justify further the definition above, we show that it is satisfied for critical points of the $p$-Laplace energy
$$J(u):=\int_{K \cap B_1} (|\nabla u|^p + \chi_{\{u>0\}}) \, dx, \quad \quad p \in (1,\infty),$$
subject to Dirichlet boundary data on $\p B_1 \cap K$, with the transmission function $$G(t)= \left (\frac{1}{p-1}+t^p \right)^ \frac 1p.$$ If a test function $a \psi^+- b \psi^-$ touches a continuous function $u$ by above at $x_0 \in Z$, then we show that $u$ cannot be critical. 

The analysis can be reduced to a one-dimensional computation as follows. After a blow-up of Lipschitz rescalings, we may reduce to the case that $x_0=0$, $K=K_0$ is a convex cone, $\psi:= - \nu_0 \cdot x$ for some unit vector $\nu_0 \in K_0$, and $a<G(b)$. We consider the perturbed test function 
$$\tilde  \Psi:=a \,  \tilde \psi^+ - b \,  \tilde \psi^-,$$
where $$\tilde \psi(x)= d +  4 \eps |d| - \eps d^2,$$ and $d=d(x)$ represents the signed distance from $x$ to the sphere of radius $R+ \eps$ centered at $R \nu_0$ for some $R \gg 1$. 

On the boundary of the annular region $\mathcal U:=\{ |d| <1\}$ we have $$\tilde \psi \ge d + 2 \eps > - \nu_0 \cdot x = \psi \quad \mbox{ hence} \quad  \tilde \Psi \ge u \quad \mbox{on} \quad \p \mathcal U \cap K.$$ 
Inside the annular region $\mathcal U$ we replace $u$ by $ \min\{u, \tilde \Psi\}$ and we claim that the energy of $J$ decreases. This is equivalent to showing that in the domain $ \mathcal U \cap K$ the energy of $\tilde \Psi$ is greater than the energy of $\tilde u:= \max \{u, \tilde \Psi\}$. Since $\tilde \Psi$ is a radial function, it suffices to prove the inequality on each ray that originates from the center $R \nu_0 \in K$. By convexity, each ray intersects $\mathcal U \cap K$ in a segment (an interval) along which $\tilde \Psi$ is monotone increasing, and $\tilde u=\tilde \Psi$ at the left endpoint, while $\tilde u \ge \tilde \Psi$ on the rest of the interval. The one-dimensional energy on each ray is a weighted version of the $J$ functional in an interval $ [-1, \beta]\subset [-1,1]$. Moreover, as we let $R \to \infty$, the weight converges to 1, and the energy becomes the restriction of $J$ in one-dimension. Now the inequality is easy to check as $\tilde \Psi$ minimizes the energy in any interval $[-1,\beta]$ among functions $\tilde u \ge \tilde \Psi$ for which $\tilde u=\tilde \Psi$ at $-1$. 
\end{rem}

The next lemma guarantees that when $G$ is the identity, then it is enough to verify only the Neumann condition on the whole of $\p K \cap B_1$, including $Z$.
\begin{lem}\label{identity}
If $G(t)=t$ then the definition above reduces back to the Definition $\ref{defNR}$ on the whole of $\p K \cap B_1$ (including $Z$).
\end{lem}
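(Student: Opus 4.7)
The lemma claims that, when $G(t)=t$, the viscosity Neumann boundary condition for the two-phase problem (conditions 1) and 2) of Section 2.2.4) becomes equivalent to simply imposing Definition \ref{defNR} on all of $\partial K \cap B_1$, including the mixed set $Z = \partial K \cap \partial\{u>0\}$. Since condition 1) already agrees with Def \ref{defNR} on $\partial K \setminus Z$ by design, the content is purely pointwise at $x_0 \in Z$, where $u(x_0)=0$ by continuity.

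The main direction, that Def \ref{defNR} at $x_0 \in Z$ implies the two-phase condition 2) at $x_0$, proceeds by a direct algebraic comparison. Suppose a test function $\Phi = a\psi^+ - b\psi^-$ with $a < G(b) = b$, $|\nabla\psi(x_0)|=1$, $\mathcal{M}^+_{\frac{\lambda}{n},\Lambda}(D^2\psi(x_0))<0$, and $-\nabla\psi(x_0)$ interior to $K_{x_0}$, touches $u$ from above at $x_0$. Splitting by the sign of $\psi$ gives $\Phi = a\psi$ on $\{\psi\geq 0\}$ and $\Phi = b\psi$ on $\{\psi\leq 0\}$; since $a < b$, in both regions $\Phi \leq b\psi$, so $\Phi \leq b\psi$ globally with equality at $x_0$ where $\psi(x_0)=0 = u(x_0)$. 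The smooth function $b\psi$ then also touches $u$ from above at $x_0$; moreover $-\nabla(b\psi)(x_0) = -b\,\nabla\psi(x_0)$ stays interior to $K_{x_0}$ (a convex cone closed under positive scaling) and $\mathcal{M}^+_{\frac{\lambda}{n},\Lambda}(D^2(b\psi)) = b\,\mathcal{M}^+_{\frac{\lambda}{n},\Lambda}(D^2\psi)<0$, making $b\psi$ an admissible Def \ref{defNR} test function violating $u_\nu \geq 0$ at $x_0$, a contradiction. The corresponding supersolution case ($a > b$, touching from below, $\nabla\psi(x_0)$ interior to $K_{x_0}$) follows symmetrically via the opposite inequality $\Phi \geq a\psi$, producing the smooth test $a\psi$ with $\nabla(a\psi)(x_0)$ interior to $K_{x_0}$.

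The reverse implication, that condition 2) implies Def \ref{defNR} at $x_0 \in Z$, is the more subtle part of the plan. A $C^2$ touching of $u$ from above by $\tilde\psi$ (normalized so that $|\nabla\tilde\psi(x_0)|=1$ and $\tilde\psi(x_0)=0$) corresponds to the excluded boundary case $a = b = 1$ of the two-phase class, since $\tilde\psi = \tilde\psi^+ - \tilde\psi^-$. The idea is to recover it as a limit: set $\Phi_\epsilon = \tilde\psi^+ - (1+\epsilon)\tilde\psi^-$, so $a=1 < b = 1+\epsilon$, and combine with a quadratic strict-touching correction $\tilde\psi \mapsto \tilde\psi + \eta|x-x_0|^2$ to create positive slack over $u$. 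Since $\Phi_\epsilon = \tilde\psi$ on $\{\tilde\psi \geq 0\}$, the only region where $\Phi_\epsilon$ can drop below $u$ is $\{\tilde\psi < 0\}$, and here the required bound $u \leq (1+\epsilon)\tilde\psi$ is obtained by choosing $\eta$ so that the quadratic slack dominates the linear term $\epsilon|\tilde\psi|$ on the relevant scale, after which a small shift restores touching at a point $x_\epsilon \to x_0$. Passing $\epsilon,\eta \to 0$ and invoking the stability of the Pucci sign and of the condition $-\nabla\psi \in \mathrm{int}\,K_{x_0}$ (Proposition \ref{stb}-style arguments), one then contradicts condition 2). The principal obstacle here is precisely the first-order versus second-order comparison on $\{\tilde\psi<0\}$, which forces the perturbation parameters to be coupled carefully. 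Together with condition 1) on $\partial K \setminus Z$, the two implications yield the stated equivalence.
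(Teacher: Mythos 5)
Your direction (B) — that Definition \ref{defNR} at $x_0\in Z$ implies condition 2) — is correct; the comparison $a\psi^+-b\psi^-\le b\psi$ with equality on $\{\psi\le0\}$ is exactly the right observation, and the rescaling by $b>0$ preserves admissibility. But this is the elementary direction, and it is not what the paper's proof establishes. The content of the lemma (and what Corollary \ref{cormain} needs) is the other implication: that a function satisfying the full two-phase Neumann definition of Section~\ref{bo} must satisfy Definition \ref{defNR} at points of $Z$.

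For that direction your plan has a genuine gap, which you yourself flag as ``the principal obstacle.'' The issue is that the defect $\tilde\psi-\Phi_\eps = \eps\,\tilde\psi^-$ is of \emph{first} order in $|x-x_0|$ on $\{\tilde\psi<0\}$ (since $|\nabla\tilde\psi(x_0)|=1$), whereas the slack you gain from adding $\eta|x-x_0|^2$ is of \emph{second} order. No coupling of $\eps$ and $\eta$ can make the quadratic slack dominate the linear defect in a full neighbourhood of $x_0$: for $|x-x_0|\lesssim \eps/\eta$ the inequality $u\le(1+\eps)\tilde\psi_\eta$ fails. This is not a technical difficulty to be finessed; it means you \emph{cannot} keep the contact point at $x_0$, and condition 2) at $x_0$ alone will never give the contradiction.

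The paper's resolution is a sliding argument that deliberately moves the contact point: take $\tilde\Psi=(1-\eps)\psi^+-\psi^-$ (so $a=1-\eps<b=1=G(1)$), choose $\eps$ small enough that $\tilde\Psi>u$ on $\partial B_r(x_0)\cap\overline K$ using the \emph{strict} touching, translate $\tilde\Psi$ in the $\nabla\psi(x_0)$ direction until it lies above $u$, then slide back to the first contact. The crucial missing ingredient in your outline is the \textbf{case analysis of where the new contact point $\bar x$ lands}: it cannot be on $\partial B_r$; if $\bar x\in\{u>0\}$ one contradicts the $\mathcal F^+$ equation (or condition 1) on $\partial K$); if $\bar x\in \partial\{u>0\}$ one contradicts the free boundary condition in $K$ or condition 2) on $Z$; and if $\bar x$ is interior to $\{u\le0\}$ one contradicts the $\mathcal F^-$ equation or condition 1). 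Your sketch says only ``one then contradicts condition 2),'' which is false for most of these cases, and the appeal to ``passing $\eps,\eta\to0$ and stability'' is off-track: the contradiction must come at each fixed $\eps$ from the location of $\bar x$, not from a limit. Without the enumeration, the argument does not close.
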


\begin{proof}
We first check the subsolution property. Assume by contradiction that $ \psi \in C^2$ touches $u$ strictly by above at a point $x_0\in Z$, with $\mathcal M_{\frac \lambda n,\Lambda}^+(D^2 \psi(x_0))<0$ and $- \nabla \psi(x_0)$ interior to $K_{x_0}$. Consider the perturbed function
$$\tilde \Psi:= (1- \eps) \psi^+ - \psi^- $$
with $\eps$ chosen such that $\tilde \Psi > u$ on $\p B_r(x_0) \cap \overline K$. Notice that $\tilde \Psi$ is strictly monotone in the $\nabla \psi(x_0)$ direction. We translate the graph of $\tilde \Psi$ slightly in the $\nabla \psi(x_0)$ direction until it is above the graph of $\psi$ (and therefore $u$) in $B_r(x_0)$, and then we translate it back continuously until it touches the graph of $u$ by above for the first time in $\overline{B_r}(x_0) \cap \overline K$. Notice that the contact point cannot occur on $\p B_r(x_0)$ or on $\{ u \ne 0\}$. Also it cannot occur on $\p \{u>0\}$ as $\tilde \Psi$ is an admissible test function for comparison (either on $\p K$ or in $K$). Thus the contact point must be interior to $\{u \le 0\}$ but then we contradict either the $\mathcal F^-$ equation if it is in the interior to $K$, or part 1) of the definition at the start of Section \ref{bo} if it is on $\p K$.

For the supersolution property we argue similarly. The situation is slightly simpler since the contact point must belong to $\p \{u>0\}$ directly. 
\end{proof}

Now, we prove a stability result for our definition of viscosity solution of a two-phase problem in a convex set and with Neumann boundary condition. Let $\mathcal F^\pm_m \in \mathcal E(\lambda, \Lambda)$, $G_m:[0,\infty)\to [0,\infty)$ be a family of strictly increasing continuous functions with $G_m(t) \to \infty$ as $ t \to \infty$, and $K_m$ be a family of convex sets satisfying \eqref{KK}.

\begin{lem}[Stability]\label{SS}
Assume $u_m$ solves \eqref{fbn1} in $K_m \cap B_1$, for the operators $\mathcal F^\pm_m$, and the free boundary condition $G_m$.
If $$u_m \to u, \quad \p K_m \to \p K, \quad \mathcal F^\pm_m \to \mathcal F^\pm \quad G_m \to G, \quad \mbox{ uniformly on compact sets,}$$
then $u$ is guaranteed to solve \eqref{fbn1} only partially, in the sense that it satisfies the comparison with all test functions required in the definitions except possibly the ones used in the supersolution property with $b=0$.  
\end{lem}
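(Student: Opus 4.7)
The plan is to follow the template of Proposition \ref{stb}, verifying each of the four ingredients of the viscosity definition for \eqref{fbn1}: (i) the interior equations in $\{u>0\}\cap K$ and $\{u\le 0\}^\circ \cap K$, (ii) the free boundary condition on $F(u)\cap K$, (iii) the Neumann condition on $\partial K$ away from $Z:=\partial K\cap\partial\{u>0\}$, and (iv) the free boundary condition on $Z$. Items (i)--(iii) are essentially standard: (i) is the classical viscosity stability from \cite{CC}, (ii) is the standard two-phase viscosity stability, and (iii) is a direct repetition of the argument of Proposition \ref{stb}, since near contact points on the relevant portion of $\partial K$ only one of the two phases is open.

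The new content is (iv), which I argue by contradiction in the usual contact-point fashion. Fix a test $\Phi = a\psi^+ - b\psi^-$ satisfying all hypotheses of the $Z$-subsolution (resp.\ supersolution) definition and strictly touching $u$ from above (resp.\ from below) at $x_0\in Z$. The constraint $x_0\in\partial\{u>0\}$ together with the strict signs $\mathcal M^{\pm}_{\frac{\lambda}{n},\Lambda}(D^2\psi(x_0))\ne 0$ and the transversality of $-\nabla\psi(x_0)$ force $\psi(x_0)=0$. The uniform convergences $u_m\to u$, $\partial K_m\to\partial K$, $\mathcal F^\pm_m\to \mathcal F^\pm$ and $G_m\to G$ then produce $\sigma_m\to 0$ and contact points $x_m\to x_0$ for the shifted test $\Phi+\sigma_m$ versus $u_m$ on $\overline{B_r(x_0)}\cap\overline{K_m}$. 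For $m$ large, both the strict Pucci signs and the transversality persist at $x_m$, and $G_m\to G$ preserves the strict inequality $a\lessgtr G_m(b)$. One then exhausts the possible locations of $x_m$ (interior to $K_m$ in $\{u_m>0\}$, in $\{u_m\le 0\}^\circ$, or on $F(u_m)$; or on $\partial K_m$ in $\{u_m>0\}$, in the relative interior of $\{u_m\le 0\}$, or on $Z_m$) and invokes in each case the corresponding condition for $u_m$ to derive a contradiction.

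The sole obstacle, and the reason for the exception in the statement, is the supersolution case on $Z$ with $b=0$. There $\Phi=a\psi^+$ vanishes identically, together with its gradient, on the whole set $\{\psi\le 0\}$. If the contact point $x_m$ falls on $\partial K_m$ in the relative interior of $\{u_m\le 0\}$, the Neumann condition 1) from Section \ref{bo} for $u_m$ requires a test $\Psi$ with $-\nabla\Psi(x_m)$ interior to the tangent cone of $K_m$ at $x_m$; here, however, $\Phi+\sigma_m$ is locally the constant $\sigma_m$, so its gradient and Hessian both vanish, ruling out both the Neumann contradiction and any strict Pucci sign for $\mathcal F_m^-$. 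Hence this location of $x_m$ cannot be excluded, and the stability proof genuinely breaks in this single case. All other cases -- including the subsolution case with $b=0$, where the extra constraint $a>0$ together with $\mathcal M^+_{\frac{\lambda}{n},\Lambda}(D^2\psi(x_0))<0$ permits a small perturbation of $\psi$ itself (rather than just an additive shift) that keeps the touching strict and prevents $x_m$ from landing in the degenerate zone $\{\psi<0\}$ -- go through routinely, yielding exactly the partial stability asserted by the lemma.
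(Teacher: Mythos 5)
The proposal misidentifies both the location and the direction of the exceptional case. First, the obstruction you describe is not special to $Z=\pa K\cap\pa\{u>0\}$: the mechanism (the test $a\psi^+$ with $b=0$ is identically constant on $\{\psi\le 0\}$, so a contact of $u_m$ there occurs against a constant and yields no contradiction with the interior equations, the free-boundary condition, or the Neumann condition) is equally present at \emph{interior} points of $F(u)$, and the paper's argument places no restriction to $\pa K$. Second, and more seriously, the claim that the ``subsolution case with $b=0$ \dots goes through routinely'' by a perturbation of $\psi$ has the asymmetry backwards. When a test $a\phi^+$ touches $u$ \emph{from below} (the supersolution comparison) at $x_0\in\pa\{u>0\}$ with $\phi(x_0)=0$, one always has $u\ge a\phi^+\ge 0\ge b'\phi$ in $\{\phi\le 0\}$; hence $a\phi^+-b'\phi^-$ still touches $u$ from below at $x_0$, and since $a>G(b')$ for $b'>0$ small, the $b=0$ test is reducible to a $b'>0$ test and stability is not lost. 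The analogous replacement for the touching-\emph{from-above} (subsolution) comparison fails: $u\le 0$ on $\{\phi\le 0\}$ does \emph{not} give $u\le b'\phi$ there, so $a\phi^+-b'\phi^-$ need not lie above $u$.

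This is confirmed by the paper's own proof (which opens with ``$u$ is touched by above \dots with $0<a<G(b)$,'' i.e.\ the subsolution comparison) and by the remark after the lemma: if $G(0)=0$ there are \emph{no} $b=0$ test functions, which is true precisely for the subsolution comparison ($0<a<G(0)=0$ is impossible since $a>0$ on $\pa\{u>0\}$) and false for the supersolution one. Thus the irreducible $b=0$ gap is in the subsolution comparison (the word ``supersolution'' in the lemma statement appears to be a slip), it occurs at all points of $\pa\{u>0\}$, and the reason the other $b=0$ case is harmless is the replacement $b=0\mapsto b'>0$ described above, not the ad hoc perturbation you propose. Your contact-point framework for items (i)--(iii) is otherwise the standard one and matches the paper's.
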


\begin{proof} The proof is straightforward. If $u$ is touched by above at some point $x_0 \in \p \{u>0\}$ by a test function $a \phi^+-b\phi^-$ with $0<a< G(b)$, we can guarantee the existence of such contact points between perturbations of $u_m$ and of the test function only if $b > 0$. On the other hand, this situation does not occur for the subsolution property when we use test functions that touch the solution by below.\end{proof}

In order to satisfy the supersolution property also with test functions with $b=0$, and obtain the full stability result for the limit it suffices to assume that the $u_m$ are nondegenerate, that is $u_m^+(x) \geq C dist(x, \{u_m=0\})$. 
Another case when the full stability can be inferred is when $G(0)=0$, since then there are no test functions with $b=0$. 

\smallskip

Finally, we deduce the following corollary.

\begin{cor}\label{cormain} If $G(t)=t$, $\mathcal F^\pm=\mathcal F$ in Lemma $\ref{SS}$, then the limiting solution $u$ solves \eqref{npb0}.

\end{cor}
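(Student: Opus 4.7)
The plan is to upgrade the partial stability provided by Lemma \ref{SS} to a full solution of \eqref{npb0}, by exploiting the special structure $G(t)=t$ and $\mathcal F^\pm=\mathcal F$. Lemma \ref{SS} already gives $\mathcal F(D^2u)=0$ in $B_1^\pm(u)$ and every comparison required for \eqref{fbn1} at points of $F(u)$ and $\p K\cap B_1$, with the sole possible exception of the supersolution property against test functions of the form $a\phi^+-b\phi^-$ with $b=0$.

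The first step is to handle the Neumann condition on $\p K\cap B_1$. Lemma \ref{identity} asserts that, when $G(t)=t$, the boundary condition for \eqref{fbn1} reduces to Definition \ref{defNR} on all of $\p K\cap B_1$ (including the set $Z$). Inspecting that proof, the perturbed competitors it employs have the form $(1-\varepsilon)\psi^+-\psi^-$ and $\psi^+-(1-\varepsilon)\psi^-$, corresponding to test functions with $b=1$ and $b=1-\varepsilon$ respectively --- both strictly positive. Hence the argument runs exclusively on the comparisons that Lemma \ref{SS} does guarantee, and we conclude $u_\nu=0$ on $\p K\cap B_1$ in the sense of Definition \ref{defNR}.

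The second step is to show $\mathcal F(D^2u)=0$ at interior points $x_0\in F(u)\cap K$. Suppose a $C^2$ function $\psi$ strictly touches $u$ from above at $x_0$ with $\mathcal M^+_{\lambda/n,\Lambda}(D^2\psi(x_0))<0$; after standard perturbations we may assume $|\nabla\psi(x_0)|=1$. Applying the perturbation trick of Lemma \ref{identity} inside a small ball $B_r(x_0)\subset K$, we translate the competitor $\widetilde\Psi=(1-\varepsilon)\psi^+-\psi^-$ along $\nabla\psi(x_0)$ until its first contact with $u$. This contact cannot occur on $\p B_r$ (by design) nor in $\{u\neq 0\}$ (ruled out by the interior $\mathcal F$-equation on $B_1^\pm(u)$), so it must lie on $F(u)$; but this contradicts the subsolution comparison at $F(u)$ provided by Lemma \ref{SS}, invoked with $b=1>0$. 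The touching-from-below case is handled symmetrically using $\widetilde\Psi=\psi^+-(1-\varepsilon)\psi^-$, which has $b=1-\varepsilon>0$.

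The main subtlety --- and the reason the corollary holds cleanly --- is precisely this bookkeeping: the missing $b=0$ case in Lemma \ref{SS} is never needed, because the natural $\varepsilon$-perturbations always yield test functions with $b>0$. The only technical nuisance is handling the degenerate case $\nabla\psi(x_0)=0$ in the interior step, which is dispatched by a small linear perturbation that restores $|\nabla\psi(x_0)|=1$ while keeping the strict touching and the sign of $\mathcal M^+_{\lambda/n,\Lambda}(D^2\psi(x_0))$ intact; this is the sort of routine modification that standard viscosity theory absorbs without comment.
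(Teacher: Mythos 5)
Your architecture is the natural one and is almost certainly what the authors have in mind: reduce the Neumann condition on $\p K\cap B_1$ to Definition \ref{defNR} via Lemma \ref{identity}, noting its proof only invokes $b>0$ tests, and then run the same perturb-and-translate argument of Lemma \ref{identity} at interior free boundary points $x_0\in F(u)\cap K$ to weld the two phases into a single equation. However, there are two issues in Step 2 that are not merely cosmetic.

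First, Definition \ref{intdef} requires ruling out $C^2$ functions $\psi$ with $\mathcal F(D^2\psi)<0$ in a neighborhood, not merely those with $\mathcal M^+_{\lambda/n,\Lambda}(D^2\psi(x_0))<0$. Since $\mathcal M^+_{\lambda/n,\Lambda}\geq\mathcal F$, the class you test is a strict subclass of what is needed, and there is no general reduction of the form $\mathcal F<0\Rightarrow\mathcal M^+<0$ at a contact point; so as written you have only shown $\mathcal M^+_{\lambda/n,\Lambda}(D^2u)\geq0$ at $F(u)$, not $\mathcal F(D^2u)\geq 0$. The good news is that this is an unforced restriction: since $\mathcal F^{\pm}=\mathcal F$, you can run the translation argument directly with the hypothesis $\mathcal F(D^2\psi)<0$. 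You then need to verify that $\mathcal F((1-\varepsilon)D^2\psi)<0$ (for the contact in $\{u>0\}$) and that the concave kink of $\tilde\Psi$ across $\{\psi=0\}$ keeps it a supersolution; both hold for $\varepsilon$ small because $\mathcal F(D^2\psi)\le -c<0$ on a shrunken neighborhood and $|\mathcal F((1-\varepsilon)M)-\mathcal F(M)|\le 2\Lambda\varepsilon\|M\|$. The free boundary test at a contact point $x_1\in F(u)$ then supplies a function $a\phi^+-b\phi^-$ with $a<b=G(b)$ and $b>0$, which is exactly what Lemma \ref{SS} rules out; no $\mathcal M^+$ condition is needed there.

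Second, your treatment of the degenerate case $\nabla\psi(x_0)=0$ does not hold up as stated. Adding a small linear term $\eta\xi\cdot(x-x_0)$ does not preserve the touching at $x_0$: it necessarily moves the contact point (and may lower the graph below $u$, requiring you to add back a constant). The correct version first shrinks the ball radius $r$ so small that $\|D^2\psi\|_{\infty}r<\eta/2$; then a first-contact argument with $\psi+\eta\xi\cdot(x-x_0)+\eta|x-x_0|^2+c$ lands at some $x_1\in B_r(x_0)$ with a genuinely non-zero gradient, and the non-degenerate case applies there (either an interior comparison if $x_1\notin F(u)$, or the translation argument if $x_1\in F(u)$). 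So the case is resolvable, but not by the one-line reason you give; as written this is a gap, and it is the kind of gap a reader familiar with viscosity theory would not "absorb without comment."

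Finally, a small bookkeeping point: for the supersolution direction (touching from below by a strict subsolution, $\mathcal F(D^2\psi)>0$), your competitor $\psi^+-(1-\varepsilon)\psi^-$ has $b=(1-\varepsilon)|\nabla\psi|>0$, so the missing $b=0$ case in Lemma \ref{SS} is indeed avoided; it would be worth saying explicitly that this is why the supersolution side is unaffected by the caveat in Lemma \ref{SS}.
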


\section{The Perron's method}

In this section we prove our main existence Theorem \ref{main}. We divide the proof into several steps.

\smallskip

\noindent{\textit{Proof of Theorem $\ref{main}$}.}

{\it Step 1.} $u= \varphi$ on $\p B_1$, and it achieves the boundary data continuously.

\

We use as barriers translations of the family of subsolutions (supersolutions) of the form
$$a \phi^+ - b \phi^-, \quad a > G(b), \quad a,b \ge 0, \quad (\mbox{resp.} \quad a \phi^- - b \phi^+, \quad a < G(b),)$$
with
\begin{equation}\label{Fsub}
\phi(x) = \frac 1M r^{M+1}\left(|x|^{-M} -r^{-M} \right).
\end{equation}
Here $M=M(\lambda, \Lambda)$ is sufficiently large so that $\mathcal M^-_{\frac \lambda n,\Lambda} (D^2 \phi)>0$. 

If $\varphi$ is greater on $\p B_1$ than a translation by $z_0 \notin B_1$ of such a subsolution, then any $v \in \mathcal A$ is greater than this translation in $B_1$. This is because $v$ satisfies the comparison principle with the monotone continuous family of subsolutions obtained by translations by $t z_0$, with $t$ decreasing from $\infty$ to $t=1$.  

This shows that $u$ is bounded below by such subsolutions. Choosing $a$, $b$, $r$, $z_0$ accordingly we find $$\liminf_{x \to x_0} u(x) \ge \varphi (x_0), \quad \quad \forall \, \, x_0 \in \p B_1.$$
The other inequality is obtained similarly by working with supersolutions.

\

{\it Step 2:} $u^+$ is continuous, and $\mathcal F^+(D^2 u)=0$ in $\{u>0\}$.

\

We show that $u^+$ is uniformly H\"older continuous on compact sets of $B_1$. Let $x_0 \in B_1 \cap \{u>0\}$, say $x_0=0$ for simplicity, and denote $$u(0)=:\sigma >0.$$ It suffices to show that there exists $\delta>0$ with $\sigma \le C \delta^\alpha$, with $\alpha$ small depending only on $n$, $\lambda, \Lambda$ and $C$ large depending also on $G$ and $\|\varphi\|_{L^\infty}$, such that $v >0$ in $B_\delta$ for any $v \in \mathcal A$. 

Then $u>0$ in $B_\delta$ and $\mathcal F^+(D^2 u)=0$ in $B_\delta$ by the Perron's method for the operator $\mathcal F^+$, see \cite{CIL}. Combining this with the interior estimates for the $\mathcal F^+$ operator we establish the H\"older continuity of $u^+$. 

Assume by contradiction that there exists $v$ such that $ \sigma \le v(0)$ and $B_d \subset \{v>0\}$ is tangent to $F(v)$ with 
\begin{equation}\label{cont0}C d^\alpha < \sigma.
\end{equation} Without loss of generality, we may assume further that $v$ solves 
\begin{equation}\label{l+}
\mathcal F^+ (D^2 v)=0 \quad\mbox{ in} \quad  \{v > \sigma /2\}.
\end{equation} 
This is achieved by replacing $v$ in the open set
$$\Omega_\eps:= \{ x \in B_1| \quad dist(x, \{v \le 0\}) > \eps \},$$ by the solution to the Dirichlet problem
$$ \mathcal F^+ (D^2 w)=0 \quad \mbox{in} \quad \Omega_\eps, \quad w=v \quad \mbox{on} \quad \p \Omega_\eps.$$ 
This replacement does not affect the free boundary $F(v)$.
Notice that $\Omega_\eps$ satisfies the exterior ball condition, and therefore the Dirichlet problem is solvable in the class of continuous functions. We obtain \eqref{l+} by choosing $\eps$ sufficiently small.   

The functions $v^-$ and $(v-\frac \sigma 2)^+$ are subsolutions in $B_1$ (i.e. they belong to $\underline{\mathcal S}_\Lambda (B_1)$) and have disjoint supports. Next we use the weak Harnack inequality (see \cite{CC}) which states that a subsolution $w \in \underline{\mathcal S}_\Lambda(B_1)$ satisfies the diminish of oscillation 
$$ \|w^+\|_{L^\infty (B_{r/2})} \le (1-c(\mu)) \|w^+\|_{L^\infty(B_r)}, \quad  \quad \mbox{if} \quad \frac{|\{w=0\} \cap B_r|}{|B_r|} \ge \mu, $$
with $c(\mu)>0$ depending on $n$, $\lambda, \Lambda$ and $\mu$. Moreover, $c(\mu) \to 1$ as $\mu \to 1$.
We choose $\mu$ accordingly so that in each $B_r$, depending on the density of $\{v \ge 0\}$ in $B_r$, we have either
$$ \|v^-\|_{L^\infty (B_{r/2})} \le \tfrac 18  \|v^-\|_{L^\infty(B_r)},$$
or
$$ \|(v- \tfrac \sigma 2)^+\|_{L^\infty (B_{r/2})} \le (1-c_0)  \|(v- \tfrac \sigma 2)^+\|_{L^\infty(B_r)},$$
with $c_0>0$ small, depending on $n$, $\lambda$, and $\Lambda$.
 
 We apply these inequalities in the dyadic balls $B_r$ with $r=2^{-k}$, $k=0,1,..., N$ with $N$ the last value for which $2^{-N} \ge d$. If for half of the values of of $k \in \{0,1,..,N\}$ we end up in the second alternative then we obtain $$ \frac \sigma 2 \le C' d ^\alpha,$$ with $\alpha$ small depending on $\lambda, \Lambda$, $n$ and $C'$ depending also on $\|\varphi\|_{L^\infty}$,  and we contradict \eqref{cont0}.
 Thus, for at least half the values of $k$ we satisfy the first  alternative, and hence
 $$ \|v^-\|_{L^\infty(B_{2d})} \le C d^{\frac 32},$$
 with $C$ depending on $\| \varphi\|_{L^\infty}$. 
 
 In particular $v \ge - C d^{3/2}$ in $B_{2d}$. This leads to a contradiction if $d$ is too small.
  Indeed, after replacing $v$ in $B_\delta$ with the solution to the Dirichlet problem for the $\mathcal F^+$ operator, we still have $v(0) \ge \sigma$ and so $v \ge c \, \sigma$ in $B_{d/2}$ from the Harnack inequality, for $c=c(n,\Lambda) >0$ small. Then 
 $$v \ge a \phi \quad \mbox{in} \quad B_d \setminus B_{d/2}, \quad a=c \, \frac \sigma d \ge d^{\alpha -1},$$
 with $\phi$ as in \eqref{Fsub} and $r=d$. On the other hand, $v >0$ in $B_d$, $v \ge - C d$ in $B_{2d}$ and the maximum principle imply that 
 $$v \ge b \phi  \quad \mbox{in} \quad B_{2d} \setminus B_{d}, \quad b=C_1 d^{1/2},$$  with $\phi$ as in \eqref{Fsub}, $r=d$, and $C_1$ a fixed constant. This contradicts the free boundary condition for $v$ at the point on $F(v) \cap \p B_\delta$ if $d$ is small, since then $a > G(b)$.

\

{\it Step 3:} $u^-$ is continuous, and $\mathcal F^-(D^2 u)=0$ in $\{u < 0\}$.

\

The proof is similar to the one of Step 2. We sketch some of the details. Assume $$u(0)=-\sigma, \quad \quad \sigma>0,$$ and we show that there exists $v \in \mathcal A$ such that $v<0$ in $B_ \delta$ with $C \delta^\alpha \ge \sigma$, and $\alpha$ small depending only on $n$, $\lambda, \Lambda$ and $C$ large depending also on $G$, $\|\varphi\|_{L^\infty}$ .

Let $v \in \mathcal A$ with $v(0) \le - \sigma /2$, and let $d$ denote the distance from $0$ to $\{v > 0\}$. It suffices to show that if $$C d^\alpha < \sigma,$$ then we can find another element $ \tilde v \in \mathcal A$ with $\tilde v(0) \le - \sigma/2$ and $\tilde v <0$ in $B_{2d}$, and then iterate this property a finite number of times.

After modifying $v$ in its positive phase as in Step 2, we may assume that $$\mathcal F^+(D^2 v)=0 \quad \mbox{ in} \quad  \{v > \eps\} \cap B_1,$$ for some $\eps>0$ small, to be made precise later. Then $v^-$ and $(v-\eps)^+$ are subsolutions with disjoint supports, and from the weak Harnack inequality either
$$ \|(v- \eps)^+\|_{L^\infty (B_{r/2})} \le \frac 18  \|(v-\eps)^+\|_{L^\infty(B_r)},$$
or
$$ \|v^-\|_{L^\infty (B_{r/2})} \le (1-c_0)  \|v^-\|_{L^\infty(B_r)}.$$
As above this implies that $(v-\eps)^+ \le C d^{3/2}$ in $B_{4d}$, thus 
\begin{equation}\label{up3}
v \le 2C d^\frac 32 \quad \mbox{ in} \quad  B_{4d}
\end{equation} 
by choosing $\eps$ sufficiently small. We can replace $v$ by the solution to $\mathcal F^-(D^2 w)=0$ in $B_d$ and obtain that
\begin{equation}\label{up31}
v \le - c \sigma \quad \mbox{in} \quad B_{d/2}.
\end{equation}
In the annulus $B_{4d} \setminus B_{d/2}$ we consider $\tilde v$ to be the minimum between $v$ and the explicit function
$$a \phi^- - b \phi^+, \quad a = C_1 d ^ \frac 12, \quad b = c_1 \frac \sigma d \ge d ^{\alpha -1},$$
with $\phi$ as in \eqref{Fsub} and $r=2d$. The constants $C_1$, $c_1$ are chosen such that the function above is greater than $v$ on $\p (B_{4d} \setminus B_{d/2})$, see \eqref{up3}-\eqref{up31}. If $d$ is small, then $a < G(b)$, and then $\tilde v \in \mathcal A$ (we extend $\tilde v$ by $v$ outside the annulus  $B_{4d} \setminus B_{d/2}$.)  

In conclusion, $u^-$ is uniformly H\"older continuous in compact sets of $B_1$, and clearly $\mathcal{F}^-(D^2 u)=0$ in $\{u<0\}$. 

\

{\it Step 4:} $u \in \mathcal A$, and $\mathcal F^-(D^2 u)=0$ in $\{u \leq 0\}^\circ$.

\

Clearly $u$ is a supersolution to the $\mathcal F^-$ equation in the set $\{u \leq 0\}^\circ$. 

Next we check the supersolution property for $u$ on the free boundary $F(u)$. 

Assume by contradiction that the graph of a function $a\phi^+ -b \phi^-$ that satisfies 1) in Definition \ref{VSFB} touches $u$ by below at $x_0 \in F(u)$. After modifying the function $\phi$ slightly we may assume further that $a$, $b >0$, 1') is satisfied, and that the test function touches $u$ strictly by below at $x_0$. This implies that if $v_n \in \mathcal A$ is a sequence such that $v_n(x_0) \to u(x_0)$, then small translations of the test function in the direction of $- \nabla \phi(x_0)$ touch the graphs of $v_n$'s at interior points converging to $x_0$. By 1') the contact point must belong to $F(v_n)$ which contradicts that $v_n \in \mathcal A$ is a supersolution. 

Since $u$ is the least supersolution for $\mathcal F^-$ in any ball included in $\{u < 0\}^\circ$ it follows that $\mathcal F^-(D^2 u)=0$ in $\{u < 0\}^\circ$. Therefore, either $u \equiv 0$ or $u<0$ in the set $\{u \le 0\}^\circ$.

\

{\it Step 5:} $u$ is a subsolution.

\

Assume by contradiction that the graph of a function $a\phi^+ -b \phi^-$ that satisfies 2), 2') in the Definition \ref{VSFB} touches $u$ by above at $x_0 \in F(u)$ in $B_r(x_0)$. We perturb $\phi$ in $B_r(x_0)$ as $$\tilde \phi:= \phi + \eps (|x-x_0|^2 - r^2),$$
with $\eps$ small, and let $$v:= \min \{ u, a \tilde \phi^+ -b \tilde \phi^-\} \quad \mbox{ in} \quad  B_r(x_0),$$ and extend $v$ to equal $u$ outside $B_r(x_0)$. Then it follows that $v \in \mathcal A$, hence $u \le v$. On the other hand, since $v \le 0$ in a neighborhood of $x_0$ we contradict that $x_0 \in \p \{u>0\}$.

\qed

\begin{rem}

i) The proof can be applied to the greatest subsolution as well, and it gives that it solves \eqref{fb} in the viscosity sense.

\smallskip

ii) If $G(0)>0$ then the least supersolution constructed has the additional property that $u^+$ is nondegenerate i.e. 
$$ u^+(x) \ge c \, \, dist(x, \{u=0\}), \quad \quad \forall x \in B_{1/2},$$
with $c$ depending on $n$, $\lambda, \Lambda$, $G(0)$. 

This follows from the fact that if $B_r(x_0) \subset B_1 \cap \{u>0\}$, then we must have $u(x_0) \ge c r$. Otherwise, $u \le c' r$ in $B_{r/2}(x_0)$ for some small $c'$, and we can construct a lower supersolution $v= \min\{ u, \phi \}$, with $\phi$ a supersolution as in \eqref{Fsub} with $b=0$, which vanishes in $B_{r/4}(x_0)$.

\smallskip

iii) The proof shows that $u \in C_{loc}^\alpha(B_1)$ for some $\alpha$ that depends only on $n, \lambda, $ and $\Lambda$. On the other hand, the $C^\alpha$ norm of $u$ in $B_{1/2}$ depends also on 
$\|\phi\|_{L^\infty}$ and the value of $G(1)$. 

\smallskip

iv) The Lipschitz continuity of $u$ can be obtained if 

a) $\mathcal F^+$ and $\mathcal F^-$ are bounded above by the same constant coefficient linear operator (then one can apply the ACF monotonicity formula)

b) $G(t)/t \to 1$ and $\mathcal F^+=\mathcal F^-$ as in \cite{DS}. 

\end{rem}

\

Finally, we observe that our proof applies also to more general nonlinear operators $\mathcal F(D^2u,Du,u)$ provided they satisfy the two key properties:

a) the Harnack inequality; 

b)  large multiples of radial barriers as in \eqref{Fsub} continue to be subsolutions. 

An example of such an operator is the $p$-Laplace operator.

\section{The Neumann problem in convex cones} \label{sec:Neumann}

In this section we study the mixed Dirichlet-Neumann problem
\begin{equation}\begin{cases}\label{Ne}
\mathcal F(D^2 u)=0 \quad \mbox{in} \quad K \cap B_1,\\
$$ u = \varphi \quad \mbox{on} \quad \p B_1 \cap \overline K, \quad \quad u_\nu =0 \quad \mbox{on} \quad \p K \cap B_1,\end{cases}\end{equation} with $K$  an open convex set satisfying \eqref{KK} and $\mathcal F \in \mathcal E(\lambda, \Lambda)$.

Existence and uniqueness are established in Proposition \ref{EXU} below and they follow after a modification of the known techniques for the Dirichlet problem (see \cite{CC,CIL}).
The main result of the section is a pointwise $C^{1,\alpha}$ regularity theorem for solutions of \eqref{Ne} in a conical domain $K$, which plays an essential role towards the proof of our main Theorem \ref{main2}.

Recall that any convex cone $K$ can be split (after a rotation) as 
\begin{equation}\label{ck}
K=K_{n-m} \times \R^m,
\end{equation} with $K_{n-m}$ a convex cone in $\R^{n-m}$ satisfying
 $$ K_{n-m} \subset \{x_1 \ge \delta |(x_1,...,x_{n-m})| \} \subset \R^{n-m}, \quad \mbox{for some $\delta>0$.}$$
When $m<n-1$, $K_{n-m}$ is strictly included in a half-space, and the case when $m=n-1$ and $K_1=(0,\infty)$ corresponds to the standard half-space situation. Here we are interested in estimates that depend on the parameter $\delta$ but are independent of the smoothness of the cross-section cone $K_{n-m}$.

\begin{thm}\label{T08} 
Let $u\in C(\overline K \cap B_1)$ be a viscosity solution to \eqref{Ne} and $K$ a convex cone as in \eqref{ck}. Then, for all $x$ in $\overline K \cap B_1$,
$$ |u(x) - u(0) - \tau_m \cdot x | \le C \|u \|_{L^\infty} |x|^{1+\alpha},$$ with $\tau_m$ a vector in $\{0\} \times \R^m$ and $\alpha \in (0,1).$
\end{thm}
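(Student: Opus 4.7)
The strategy is a standard iteration/compactness scheme \`a la Caffarelli, exploiting two structural features of the cone $K=K_{n-m}\times\R^m$: its dilation invariance ($\lambda K=K$ for $\lambda>0$) and its translation invariance in the $\R^m$-directions.

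As a preliminary step I would establish a pointwise boundary $C^\alpha$ estimate at the origin, $|u(x)-u(0)|\le C\|u\|_\infty|x|^\alpha$, with $\alpha, C$ depending only on $n,\lambda,\Lambda,\delta$. This comes from a diminish-of-oscillation lemma obtained via the weak Harnack inequality applied to the positive and negative parts of $u-\mathrm{const}$, using that by \eqref{KK} each $B_r\cap K$ contains a ball of radius $\sim\delta r$. Iterating this on dyadic scales centered at $0$ is legitimate thanks to the dilation invariance of $K$, and the constants depend only on $\delta$, not on any smoothness of $K_{n-m}$.

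The core of the proof is an improvement-of-flatness lemma: there exist $\rho_0\in(0,1)$, $\alpha_0\in(0,1)$, $C_0>0$ depending only on $n,\lambda,\Lambda,\delta$ such that any $u$ with $\|u\|_\infty\le 1$ and $u(0)=0$ admits a $\tau\in\{0\}\times\R^m$ with $|\tau|\le C_0$ and $\|u-\tau\cdot x\|_{L^\infty(B_{\rho_0}\cap K)}\le\rho_0^{1+\alpha_0}$. I would prove this by contradiction and compactness: if it fails along a sequence, the preliminary $C^\alpha$ estimate together with the stability Proposition~\ref{stb} extracts a uniform limit $u_\infty$ solving the Neumann problem on $B_1\cap K_\infty$ for a limit cone $K_\infty=K_{n-m}^\infty\times\R^m$ sharing the same parameter $\delta$. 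The task reduces to classifying $u_\infty$: showing it is approximated to order $|x|^{1+\alpha_0}$ by some $\tau\cdot x$ with $\tau\in\{0\}\times\R^m$. This classification exploits the $\R^m$-translation invariance: finite differences $h^{-1}[u_\infty(\cdot+he)-u_\infty(\cdot)]$ for $e\in\R^m$ are uniformly bounded (by the $C^\alpha$ estimate) solutions of a Pucci-type linear Neumann problem, so after passing to the limit they produce directional derivatives $\partial_e u_\infty$ in the $\R^m$-directions. A complementary Liouville statement—bounded viscosity solutions to Pucci-type Neumann problems on the entire cone $K_\infty$ are constant, proved by applying the boundary $C^\alpha$ estimate to rescalings $u(R\cdot)$ and sending $R\to\infty$ via the scale invariance of $K_\infty$—forces each $\partial_e u_\infty$ to be constant, delivering the vector $\tau\in\{0\}\times\R^m$.

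Finally, I would iterate the improvement-of-flatness at dyadic scales $r_k=\rho_0^k$, rescaling $v_k(x)=r_k^{-(1+\alpha_0)}[u(r_kx)-u(0)-\tau^{(k)}\cdot(r_kx)]$. Since $\tau^{(k)}\cdot x$ has zero Hessian and is tangential to $\partial K$ (the outer normals to $\partial K$ lie in $\R^{n-m}\times\{0\}$ while $\tau^{(k)}\in\{0\}\times\R^m$), $v_k$ still solves the Neumann problem in $K\cap B_1$ for a rescaled operator in $\mathcal E(\lambda,\Lambda)$. Applying the lemma to $v_k$ produces $\tau^{(k+1)}$ with $|\tau^{(k+1)}-\tau^{(k)}|\le C r_k^{\alpha_0}$, so $\{\tau^{(k)}\}$ is Cauchy; its limit $\tau_m$ gives the desired expansion. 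The main obstacle is the classification of $u_\infty$: the cross-section $K_{n-m}$ is rough and admits no natural reflection symmetry, so classical boundary Schauder theory is unavailable. What makes the argument viable is precisely the scale invariance of the cone, which trades delicate boundary regularity questions for a Liouville statement that needs only the already-established boundary $C^\alpha$ estimate.
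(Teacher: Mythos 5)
Your proposal takes a genuinely different route from the paper, but it has a gap at the classification step that I do not see how to close with the tools you invoke.

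The paper does not argue by compactness/improvement-of-flatness at this stage. Its key device is to show directly that $w=|\nabla u|^2$ belongs to the subsolution class $\underline{\mathcal S}_\Lambda(K\cap B_1)$ (Lemma~\ref{nabu}). This is done via sup-convolutions in the spirit of Jensen's lemma: $(u^\eps-u)/\eps \in \underline{\mathcal S}_\Lambda$ by the comparison result for sub/supersolutions (Proposition~\ref{comp}), the interior gradient estimate gives $(u^\eps-u)/\eps \le C\min\{d(x),\sqrt\eps\}^{-2}$ which is in $L^p$ for small $p$, and the weak Harnack inequality (Proposition~\ref{prop:Harnack}) then yields a uniform $L^\infty$ bound on $(u^\eps-u)/\eps$, hence on $|\nabla u|$, up to $\partial K$. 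Once Lipschitz control is in hand, the gradient oscillation decay (Lemmas~\ref{L06}--\ref{L07}) is proved by a barrier argument: either $|\nabla u|$ drops at an interior point (then apply weak Harnack to $1-|\nabla u|^2\in\overline{\mathcal S}_\Lambda$), or $u$ is close to a linear profile whose slope is not tangent to $\partial K$, and a quadratic test polynomial violates the Neumann condition at a strictly convex boundary point. No compactness argument on the solutions is used; compactness enters only to verify a geometric fact about convex cones.

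Your approach replaces this by a compactness improvement-of-flatness step, and the weak link is the classification of the blow-up limit $u_\infty$. You argue that finite differences $h^{-1}[u_\infty(\cdot+he)-u_\infty(\cdot)]$ for $e\in\{0\}\times\R^m$ are ``uniformly bounded by the $C^\alpha$ estimate,'' but the $C^\alpha$ estimate only gives a bound of order $h^{\alpha-1}$, which blows up as $h\to 0$; a uniform bound on these quotients is precisely a Lipschitz estimate, i.e.\ the hardest part of the theorem, and you have not supplied it. Moreover, even if $\partial_e u_\infty$ existed as a bounded viscosity solution of a Pucci--Neumann problem, your Liouville theorem (bounded solutions on the \emph{entire} cone are constant) does not apply, because $\partial_e u_\infty$ lives only on $B_1\cap K_\infty$; it would force constancy of a global solution, not of a local one. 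And finally, even granting constancy of all $\partial_e u_\infty$ with $e\in\{0\}\times\R^m$, you would conclude $u_\infty(x)=v(x')+\tau\cdot x''$, where $v$ solves the Pucci--Neumann problem on the strictly convex cone $K_{n-m}\cap B_1$; the pointwise $C^{1,\alpha}$ regularity of $v$ at the vertex is the $m=0$ instance of the very theorem under proof, so the reduction is circular. The compactness scheme does not self-improve here because the limit problem (Pucci operator with Neumann condition on a cone) is of the same type as the original, with no extra structure such as concavity or constant coefficients to invoke. You would need a substitute for the paper's $|\nabla u|^2\in\underline{\mathcal S}_\Lambda$ step, or an independent Liouville/regularity result for the vertex in the strictly convex case, to make the argument close.
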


The constants $C$ and $\alpha$ in Theorem \ref{T08} depend only on $n$, $\lambda$, $\Lambda$ and $\delta$, and the same for the constants appearing in the steps of the proof below.

We start by introducing the standard classes of functions which are subsolutions/supersolutions to linear equations with measurable coefficients and satisfy the Neumann condition on $\p K$. 

\begin{defn}{[The $\mathcal S_\Lambda$ class.]} \label{sla2} We say that
$$u \in \underline {\mathcal S}_\Lambda(K \cap B_1)$$
if $u \in C( \overline K \cap B_1)$ is a viscosity subsolution to
$$ \mathcal M_{\frac \lambda n, \Lambda}^+ (D^2 u) \ge 0 \quad \mbox{in} \quad K \cap B_1, \quad  u_\nu \ge 0 \quad \mbox{on} \quad \p K \cap B_1,$$
in the sense of Definitions \ref{intdef}-\ref{defNR}. Similarly, we can define $\overline{\mathcal S}_\Lambda (K\cap B_1)$ by considering supersolutions. Then $$\mathcal S_\Lambda (K\cap B_1) = \underline{\mathcal S}_\Lambda (K\cap B_1)\cap \overline{\mathcal S}_\Lambda (K\cap B_1).$$\end{defn}

We state a version of maximum principle in this setting.

\begin{lem}[Maximum principle] \label{lem:max}
Assume that $u \in \underline {\mathcal S}_\Lambda(K \cap B_1)$ is continuous on $\p B_1$. If $u \le 0$ on $\p B_1$, then $u \le 0$ in $\overline {K \cap B_1}$.

\end{lem}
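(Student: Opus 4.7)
The strategy is a proof by contradiction based on the exponential perturbation $\hat u(x):=u(x)+\eps\, e^{x\cdot y}$, where $y$ is the point from \eqref{KK}, so that $|y|=1$ and $B_\delta(y)\subset K$. The exponential has two key features that will be exploited. The Hessian $yy^T e^{x\cdot y}$ is rank one with positive eigenvalue $e^{x\cdot y}$, so $-e^{x\cdot y}$ is a strict supersolution of $\mathcal M^+_{\lambda/n,\Lambda}=0$. Moreover the direction $y$, being an interior point of the convex cone $K$, lies in the interior of the tangent cone $K_{x_0}$ at \emph{every} boundary point $x_0\in\p K$, a fact that one checks directly from convexity together with the cone property of $K$.

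First I would localize the maximum of $\hat u$. Suppose, for contradiction, that $M:=\sup_{\overline{K\cap B_1}}u>0$. By continuity and compactness, $u$ attains $M$ at some $x^*$; since $u\le 0$ on $\p B_1$, we have $x^*\notin\p B_1$, so $\hat u(x^*)\ge M+\eps/e$. On $\p B_1$, instead, $\hat u\le \eps e$. Choosing $\eps< M/(e-1/e)$ forces the maximum of $\hat u$ on $\overline{K\cap B_1}$ to be attained at some $\hat x_0\in\overline{K\cap B_1}\setminus\p B_1$, i.e.\ either $\hat x_0\in K\cap B_1$ or $\hat x_0\in\p K\cap B_1$.

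Then $\psi(x):=\hat u(\hat x_0)-\eps\, e^{x\cdot y}$ touches $u$ from above at $\hat x_0$, and
\[\nabla\psi(\hat x_0)=-\eps\, y\, e^{\hat x_0\cdot y},\qquad D^2\psi(\hat x_0)=-\eps\, yy^T e^{\hat x_0\cdot y}.\]
Hence $\mathcal M^+_{\lambda/n,\Lambda}(D^2\psi(\hat x_0))=-\tfrac{\lambda}{n}\eps\, e^{\hat x_0\cdot y}<0$, and $-\nabla\psi(\hat x_0)$ is a positive multiple of $y$. If $\hat x_0\in K\cap B_1$, then $\psi$ is a strict classical supersolution touching $u$ by above at an interior point, directly contradicting Definition~\ref{intdef}. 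If $\hat x_0\in\p K\cap B_1$, I would verify that $y$ is interior to the tangent cone $K_{\hat x_0}$: for any $y'\in B_\delta(y)$ and any $t>0$, set $s:=t/(1+t)\in(0,1)$; convexity of $K$ yields $(1-s)\hat x_0+sy'\in K$, and scaling by $1+t$, using that $K$ is a cone, gives $\hat x_0+ty'\in K$. Thus $\hat x_0+tB_\delta(y)\subset K$, so $y\in\mathrm{int}(K_{\hat x_0})$, and therefore so is $-\nabla\psi(\hat x_0)$. Combined with the Pucci condition just computed, this contradicts the Neumann condition of Definition~\ref{defNR}.

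The main obstacle is getting the perturbation to work simultaneously at corners of $\p K$: a naive quadratic bump like $\eps|x|^2$ fails there because every supporting hyperplane at $\hat x_0\in\p K$ contains the cone apex $0$, forcing the radial gradient $2\hat x_0$ to be tangential so that the Neumann condition is never strictly violated. Choosing an exponential in an interior direction $y$ of $K$ circumvents this precisely because $y$ sits in the open tangent cone at \emph{every} boundary point, regardless of how singular $\p K$ is.
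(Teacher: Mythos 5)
Your overall strategy -- perturb $u$ by a strict supersolution and derive a contradiction at the maximum of the perturbed function -- is the same spirit as the paper's barrier argument, and the interior computation, the localization via $\eps < M/(e - 1/e)$, and the contradiction in the case $\hat x_0 \in K \cap B_1$ are all correct. However, there is a genuine gap in the boundary case: your verification that $y$ lies in the \emph{open} tangent cone $K_{\hat x_0}$ explicitly invokes "using that $K$ is a cone," but Lemma~\ref{lem:max} is stated and used (in particular in the uniqueness part of Proposition~\ref{EXU}) for an arbitrary open convex $K$ satisfying only \eqref{KK}, not necessarily a cone. For non-conical $K$ the claim fails. A concrete counterexample: in $\R^2$ take $K = \{x_2 > -\tfrac12\} \cap \{x_2 < x_1\}$, $y = e_1 \in \p B_1$, $\delta = \tfrac14$ (so \eqref{KK} holds with $B_{1/4}(e_1) \subset K$), and $\hat x_0 = (0,-\tfrac12) \in \p K \cap B_1$. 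The unique interior normal at $\hat x_0$ is $\nu_0 = e_2$, and $\nu_0 \cdot y = 0$, so $y$ is only on the \emph{boundary} of $K_{\hat x_0}$ and your $\psi$ does not satisfy the strict inequality $\psi_{\nu_0}(\hat x_0) < 0$ required by Definition~\ref{defNR}; no contradiction can be drawn if the maximum of $\hat u$ falls at such a point.

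The paper avoids this precisely by using a barrier whose gradient direction adapts to the touching point: $\psi_t(x) = t\bigl((\delta')^{-M} - |x-y|^{-M}\bigr)$ with $B_{\delta'}(y) \subset K \setminus \overline{B_1}$ and $M$ large so that $\mathcal M^+_{\lambda/n,\Lambda}(D^2\psi_t) < 0$. Then $-\nabla\psi_t(x)$ is a positive multiple of $y - x$, which for any $x \in \p K$ is automatically interior to $K_x$ (since $y$ is an interior point of $K$), with no cone assumption whatsoever; and $\psi_t > 0 \ge u$ on $\p B_1$. Sliding $t \downarrow 0$ gives $u \le 0$. Your exponential barrier, by contrast, commits to a single fixed gradient direction $y$ for all touching points, which only works when $K$ is a cone with vertex at $0$ (so that $y \in K$ implies $\hat x_0 + ty \in K$ for all $t > 0$). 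To repair the argument in full generality, replace $e^{x\cdot y}$ by a radial barrier centered at an interior point of $K$ outside $\overline{B_1}$; the rest of your proof then goes through unchanged.
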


\begin{proof} Let $ B_{\delta'}(y) \subset K \setminus \overline{B_1}$, with $\delta'$ small depending on $\delta$ from \eqref{KK}, and consider the continuous family of test functions
\begin{equation}\label{psit}
\psi_t(x):= t((\delta')^{-M}-|x-y|^{-M}), \quad \quad t >0,
\end{equation}
with $M$ sufficiently large (depending on $\lambda,\Lambda ,n$) such that $ \mathcal M_{\frac \lambda n, \Lambda}^+ (D^2 \psi_t) < 0$. 

Notice that $-\nabla \psi_t(x)$ points in the direction of $y - x$, hence $\psi_t$ cannot touch $u$ by above on $\p K \cap B_1$. Also $\psi_t$ cannot touch $u$ by above in the interior of $K\cap B_1$, or on the boundary $\p B_1$ since here $\psi_t >0 \ge u$.  We deduce that $u < \psi_t$ holds in $\overline{K\cap B_1}$ for all $t>0$. Indeed, this holds for one large value $t=t_0$, and then the inequality is preserved as we can decrease $t$ continuously up to $0$. Taking the limit $t\to0^+$ gives the conclusion. 
\end{proof}

We now establish that solutions are $C^\alpha$ in the interior, all the way up to the boundary of $K$. By standard arguments, it is enough to prove the following.

\begin{lem}\label{cal}
Let $u \in {\mathcal S}_\Lambda(K \cap B_1)$. Then 
$$ osc_{B_{1/2} \cap K} \, u \le (1- c) \, \, osc_{B_1 \cap K} \, u.$$
\end{lem}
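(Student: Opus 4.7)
The plan is to establish a pointwise positivity estimate on $K \cap B_{1/2}$ by combining the interior Harnack inequality at a central interior point with a radial barrier that transmits the bound across the Neumann boundary. After normalizing so that $osc_{K \cap B_1} u = 1$ and $0 \le u \le 1$, I set $p_0 := y/4$ with $y$ the point from \eqref{KK}; convexity plus $0 \in \partial K$ give $B_{\delta/4}(p_0) \subset K$, and $p_0 \in B_{1/2}$. Since $1-u \in \mathcal S_\Lambda(K \cap B_1)$ also satisfies $(1-u)_\nu = 0$, I may assume $u(p_0) \ge 1/2$, and the interior Harnack inequality in $B_{\delta/4}(p_0)$ yields $u \ge c_1$ on $\overline{B_{r_0}(p_0)}$ with $r_0 = \delta/8$ and $c_1 = c_1(n, \lambda, \Lambda) > 0$. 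The task thus reduces to showing $u \ge c$ on all of $K \cap B_{1/2}$ for some $c = c(n,\lambda,\Lambda,\delta) > 0$.

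For propagation I use the radial subsolution
$$
\phi(x) := t_0 \bigl( |x - p_0|^{-M} - R^{-M} \bigr), \qquad R := \tfrac 34,
$$
with $M = M(n,\lambda,\Lambda)$ so large that $\mathcal M^-_{\lambda/n, \Lambda}(D^2 \phi) > 0$ and $t_0 > 0$ chosen so that $\phi \equiv c_1$ on $\partial B_{r_0}(p_0)$. Then $\phi$ is a strict classical subsolution on the annular region $D := (K \cap B_1) \setminus \overline{B_{r_0}(p_0)}$, $\phi \le 0$ on $\partial B_1 \cap \overline K$ (since $R = \min_{x \in \partial B_1}|x - p_0|$), and $\nabla \phi(x)$ points strictly from $x$ toward the interior point $p_0$, so $\phi_{\nu_0} > 0$ for every supporting inward normal $\nu_0$ at every $x \in \partial K$. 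A strict comparison argument in the spirit of Lemma \ref{lem:max} then yields $\phi \le u$ in $D$: at a putative positive maximum $m$ of $\phi - u$ on $\overline D$, the translate $\phi - m$ touches $u$ from below, interior points are ruled out because $\mathcal M^-(D^2 \phi) > 0$ contradicts the viscosity supersolution inequality $\mathcal M^-(D^2 u) \le 0$, Neumann-boundary points $x_0 \in \partial K \cap D$ are ruled out because $(\phi - m)_{\nu_0} > 0$ contradicts $u_\nu \le 0$, and the Dirichlet parts $\partial B_1 \cap \overline K$ and $\partial B_{r_0}(p_0)$ are excluded by construction.

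The last ingredient is the convex-geometric bound $R_1 := \max_{x \in \overline K \cap \overline{B_{1/2}}} |x - p_0| < R$, with quantitative gap in terms of $\delta$. A supporting hyperplane of $K$ at $0$ gives an inward normal $\nu_0$ with $K \subset \{x \cdot \nu_0 \ge 0\}$, and the inclusion $B_\delta(y) \subset K$ forces $y \cdot \nu_0 \ge \delta$. The only point of $\overline{B_{1/2}}$ at which $|x - p_0|$ attains $R = 3/4$ is $x = -y/2$, and this point fails $x \cdot \nu_0 \ge 0$ by the previous inequality and so lies outside $\overline K$; an elementary Lagrange computation gives $R_1 \le R\sqrt{1 - c(\delta)}$ for an explicit $c(\delta) > 0$. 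Therefore $\phi \ge t_0(R_1^{-M} - R^{-M}) =: c_2 > 0$ on $(K \cap B_{1/2}) \setminus B_{r_0}(p_0)$, and combined with $u \ge c_1$ on $B_{r_0}(p_0)$ this gives $u \ge \min(c_1, c_2)$ throughout $K \cap B_{1/2}$, yielding $osc_{K \cap B_{1/2}} u \le 1 - \min(c_1, c_2)$. The main delicate steps are the strict mixed-boundary comparison on $D$ (a direct adaptation of the argument of Lemma \ref{lem:max}) and the quantitative gap $R_1 < R$ in terms of $\delta$.
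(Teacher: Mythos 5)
Your proposal is correct and in essence follows the same strategy as the paper: run the interior Harnack inequality at a ball placed inside $K$, then propagate the resulting lower bound with a radial subsolution of the form $|x - p|^{-M} - R^{-M}$, which is an admissible Neumann comparison function because $\nabla\phi$ always points toward the interior center, hence $\phi_{\nu_0}>0$ at every $x \in \partial K$ for every supporting inner normal $\nu_0$. The main difference is in the geometric bookkeeping. The paper places the center of the barrier inside $K \cap B_{1/8}$ and uses radius $1/2$; then the barrier ball is compactly contained in $B_1$, the positivity of $\phi$ on $B_{1/4}$ is automatic from the triangle inequality, and the statement with $B_{1/2}$ follows by a standard covering. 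You instead take $p_0 = y/4$ and $R = 3/4$; this hits $B_{1/2}$ directly, but the barrier ball $B_{3/4}(y/4)$ is internally tangent to $\partial B_1$ at $y$, so you need the extra convex-geometric lemma that $\max_{\overline K \cap \overline{B_{1/2}}} |x - p_0| \le (3/4)\sqrt{1-c(\delta)}$ (your supporting-hyperplane/Lagrange computation; I checked it gives roughly $c(\delta)\sim \delta^2$). So your route trades a covering argument for a quantitative convexity estimate; both are fine, the paper's is a bit lighter.

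Two small points to tighten. First, as written you take "a putative positive maximum $m$ of $\phi-u$ on $\overline D$" with $D = (K\cap B_1)\setminus \overline{B_{r_0}(p_0)}$, but $u$ is only given in $C(\overline K\cap B_1)$, and $\overline D$ reaches $\partial B_1$. Since $\phi \le 0 \le u$ near $\partial B_1$, a positive supremum is necessarily attained on a compact subset of $\overline K\cap B_1$, so the argument goes through; alternatively shrink $R$ by a small $\epsilon(\delta) < c(\delta)$ so that $\overline{B_R(p_0)} \subset B_1$. Second, Lemma \ref{lem:max} uses a sliding one-parameter family of barriers rather than a direct "max of $\phi-u$" argument, so "in the spirit of" is slightly loose, though either version works here once the compactness issue is addressed.
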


\begin{proof} Assume that $$0 \le u \le 1,$$ and for simplicity we prove the inequality above with $B_{1/2}$ replaced by $B_{1/4}$. 

Pick an interior ball, $B_{\delta'/4} (y) \subset K \cap B_{1/8}$ with $\delta'$ small, depending on $\delta$, see \eqref{KK}. Assume that $u(y)$ is closer to the top constraint, i.e. $u(y) \ge 1/2.$ Then, by the interior Harnack inequality $u \ge c$ in $B_{\delta/8}(y)$, and we can compare $u$ in the annulus $B_{1/2}(y) \setminus B_{\delta/8}(y)$ with an explicit barrier of the form $$\psi(x) = c'\left(|x-y|^{-M} -  2^{M}\right),$$
with $c'$ small and $M$ large. The constant $M$ is chosen such that the barrier is a subsolution to the maximal Pucci operator $\mathcal M_{\frac \lambda n, \Lambda}^+$, and $c'$ so that it is below $u$ on $\p B_{\delta/8}(y)$. Notice that the Neumann condition is satisfied since the gradient of the barrier at $x \in \p K$ points in the direction of $y-x$ which is interior to the tangent cone $K_x$. Therefore, $u\geq \psi$ in $B_{1/2}(y) \setminus B_{\delta/8}(y)$, and we get the desired conclusion since the barrier is positive in $B_{1/4}$.
\end{proof}

In order to prove existence and uniqueness, we introduce the following regularization.

\begin{defn}{[The sup-convolution.]}\label{supconv} Let $u$ be a continuous function in $\overline{K \cap B_1}$. Given $\eps>0$, for all $x \in \overline{K \cap B_1}$ we define 
$$u^\eps(x)= \max_{y \in \overline {K \cap B_1}} \left(u(y) - \frac{1}{2 \eps} |x-y|^2 \right).$$\end{defn}
Notice that the point $y=y(x)$ where the maximum is realized satisfies $|y-x| \le C \eps^{1/2}$, where $C$ depends on $\|u\|_{L^\infty}(K\cap B_1)$.
\begin{prop}\label{reg} If $u$ is a subsolution to \eqref{Ne}, that is 
$$\mathcal F(D^2 u)\ge 0 \quad \mbox{in} \quad K \cap B_1, \quad \quad  u_\nu \ge 0 \quad \mbox{on} \quad \p K \cap B_1,$$
then $u^\eps$ is a semiconvex subsolution to \eqref{Ne} in $K \cap B_{1- C \eps ^{ 1/2}}$, where $C$ depends on $\|u\|_{L^\infty}(K\cap B_1)$.
\end{prop}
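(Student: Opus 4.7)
The plan is to verify three properties of $u^\eps$ separately: semiconvexity, the interior inequality $\mathcal F(D^2 u^\eps)\ge 0$ in $K\cap B_{1-C\eps^{1/2}}$, and the Neumann inequality $u^\eps_\nu \ge 0$ on $\p K\cap B_{1-C\eps^{1/2}}$. Semiconvexity is immediate: writing
\[
u^\eps(x)+\frac{|x|^2}{2\eps} \;=\; \sup_{y\in\overline{K\cap B_1}}\Bigl(u(y)+\frac{x\cdot y}{\eps}-\frac{|y|^2}{2\eps}\Bigr),
\]
the left-hand side is a supremum of affine functions of $x$, hence convex with constant $1/\eps$. The bound $|y(x)-x|\le C\eps^{1/2}$ for any maximizer $y(x)$, with $C$ depending on $\|u\|_{L^\infty(K\cap B_1)}$, follows by comparing $u^\eps(x)\ge u(x)$ with the definition.

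The main tool is a test-function transfer. If a $C^2$ function $\phi$ touches $u^\eps$ from above at $x_0\in\overline{K\cap B_{1-C\eps^{1/2}}}$ and $y_0$ is a maximizer, then the shift $\psi(y):=\phi(y+x_0-y_0)+\frac{1}{2\eps}|x_0-y_0|^2$ touches $u$ from above at $y_0\in\overline{K\cap B_1}$, with $\nabla\psi(y_0)=\nabla\phi(x_0)$ and $D^2\psi(y_0)=D^2\phi(x_0)$; the choice of $C$ guarantees $y_0\in B_1$. Moreover, differentiating the inequality $\phi(x)\ge u(y_0)-\frac{1}{2\eps}|x-y_0|^2$ at $x_0$ along admissible directions $d\in K_{x_0}$ yields $\nabla\phi(x_0)-(y_0-x_0)/\eps\in K_{x_0}^\ast$ (the outward dual cone), with equality $\nabla\phi(x_0)=(y_0-x_0)/\eps$ whenever $x_0$ is interior to $K$.

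Both required inequalities for $u^\eps$ then reduce to a case analysis on the position of $y_0$. For the interior PDE at interior $x_0$: if $y_0$ is interior to $K\cap B_1$, then $\psi$ touches $u$ from above at an interior point and Definition \ref{intdef} gives $\mathcal F(D^2\phi(x_0))\ge 0$; if $y_0\in\p K\cap B_1$, then $-\nabla\psi(y_0)=(x_0-y_0)/\eps$ together with convexity of $K$ (and $x_0$ interior) places $-\nabla\psi(y_0)$ in the interior of $K_{y_0}$, so $\psi$ is a forbidden test function for $u_\nu\ge 0$ at $y_0$ and this case is impossible. For the Neumann inequality at $x_0\in\p K$: test with $\phi$ satisfying $-\nabla\phi(x_0)$ interior to $K_{x_0}$ and, via the remark after Definition \ref{defNR}, $\mathcal M^+_{\lambda/n,\Lambda}(D^2\phi(x_0))<0$. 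If $y_0$ is interior, $\psi$ is a strict classical supersolution ($\mathcal F(D^2\psi(y_0))\le \mathcal M^+<0$) touching $u$ from above, contradicting $u$'s interior subsolution property. The remaining subcase $y_0\in\p K$ is ruled out by combining the polar-cone representation $\nabla\phi(x_0)=(y_0-x_0)/\eps+n$ with the convexity relation $y_0-x_0\in K_{x_0}$: taking the inner product of $-\nabla\phi(x_0)$ with $n$ gives a value $\le -|n|^2$, incompatible with $-\nabla\phi(x_0)$ lying strictly inside $K_{x_0}$ unless $n=0$ and $y_0=x_0$, in which case $\nabla\phi(x_0)=0$ is not interior to $K_{x_0}$ either.

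The main obstacle is precisely this last computation: using the polar-cone identity and convexity to rule out the subcase $y_0\in\p K$ in the Neumann verification, so that the transferred test function always lands at an interior maximizer. Once this bookkeeping is in place the Pucci perturbation from Definition \ref{defNR} and the preservation of strict touching under small $C^2$ modifications are routine, and the argument closes.
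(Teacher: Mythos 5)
Your proof is correct and follows essentially the same strategy as the paper: sup-convolution plus test-function transfer, using the first-order condition at a maximizer $y_0$ and the convexity of $K$ to place the relevant gradients in the tangent cone or its dual, then contradicting the appropriate viscosity inequality for $u$. One small point of difference: for the Neumann inequality at $x_0\in\pa K$ the paper avoids any case distinction on the location of $y_0$, arguing directly that the tangent paraboloid $p(x)=u(y_0)-\tfrac1{2\eps}|x-y_0|^2$ touches $u^\eps$ from below at $x_0$ with $\nabla p(x_0)=(y_0-x_0)/\eps\in\overline{K_{x_0}}$, so that $-\nabla(\psi-p)(x_0)$ lands in the interior of $K_{x_0}$ and contradicts the minimum of $\psi-p$ at $x_0$; your case split ($y_0$ interior via the Pucci normalization, $y_0\in\pa K$ via cone arithmetic) reaches the same conclusion but with more bookkeeping. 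Also, in your final cone computation only $n=0$ is forced, not $y_0=x_0$: you should add that when $n=0$ and $y_0\ne x_0$ one has $\nabla\phi(x_0)=(y_0-x_0)/\eps\in\overline{K_{x_0}}\setminus\{0\}$, so $-\nabla\phi(x_0)\in-\overline{K_{x_0}}$ is still disjoint from the interior of $K_{x_0}$, which closes the remaining subcase.
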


\begin{proof}An important observation is that if $x \in K \cap B_{1- C \eps ^{ 1/2}}$ then the corresponding $y$ where the maximum is realized must be in $K \cap B_1$ and not on $\p K$. Otherwise, $u(z)$ is touched by above at $y \in \p K \cap B_1$ by 
$$ \psi(z)=u^\eps(x) + \frac{1}{2 \eps} |x-z|^2$$
and we get a contradiction since $-\nabla \psi(y)$ points in the direction $x-y \in K_y$.
Therefore $y$ is an interior point of $K$. It follows that $\mathcal F(D^2 u^\eps(x)) \ge 0$ since if a test function $\varphi\in C^2$ touches $u^\eps$ by above at $x$, then a translation of $\varphi$ touches $u$ by above at $y$.

Next we check the Neumann boundary condition for $u^\eps$. If $x_0 \in \p K$, then $u^\eps$ has a tangent polynomial 
$$p(x)=u(y_0) - \frac{1}{2 \eps} |x-y_0|^2, \quad \quad y_0 \in \overline K,$$
by below at $x_0$, and  notice that $\nabla p(x_0)$ points in the direction of $y_0-x_0 \in \overline {K_{x_0}}$. If $\psi\in C^2$ is a test function that touches $u^\eps$ (and therefore $p$) by above at $x_0$ and satisfies property a) in Definition \ref{defNR}, then
$-\nabla (\psi - p) (x_0)=-\nabla \psi(x_0) + \nabla p(x_0)$ points in the interior of $K_{x_0}$ and we contradict that $\psi-p \ge 0$ along the direction of $-\nabla (\psi - p) (x_0)$.
\end{proof}

We can now establish the following main proposition.

\begin{prop}\label{comp} If $u$ is a subsolution to \eqref{Ne} and $v$ is a supersolution to \eqref{Ne}, then $u-v \in \underline {\mathcal S}_\Lambda$. 

\end{prop}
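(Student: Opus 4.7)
The plan is to replace $u$ and $v$ by regularizations that admit pointwise second derivatives almost everywhere, compare them via the uniform ellipticity of $\mathcal F$, and then pass to the limit. Concretely, I would set $u^\eps$ as in Definition \ref{supconv} and introduce the symmetric inf-convolution
\[
v_\eps(x) := \min_{y \in \overline{K \cap B_1}} \Bigl( v(y) + \tfrac{1}{2\eps}|x-y|^2 \Bigr),
\]
whose proof of being a semiconcave viscosity supersolution of \eqref{Ne} in $K \cap B_{1 - C\eps^{1/2}}$ is completely parallel to Proposition \ref{reg}. The goal is then to show that $w^\eps := u^\eps - v_\eps$ belongs to $\underline{\mathcal S}_\Lambda(K \cap B_{1-C\eps^{1/2}})$; since $u^\eps \to u$ and $v_\eps \to v$ uniformly on compact sets of $K \cap B_1$, an easy variant of Proposition \ref{stb} for the subsolution class $\underline{\mathcal S}_\Lambda$ then yields $u-v \in \underline{\mathcal S}_\Lambda(K \cap B_1)$.

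The interior Pucci inequality for $w^\eps$ is the routine half. Semiconvexity of $u^\eps$ and semiconcavity of $v_\eps$ give pointwise second derivatives almost everywhere via Alexandrov's theorem; at such a common point the uniform ellipticity of $\mathcal F$ together with $\mathcal F(D^2 u^\eps) \ge 0$ and $\mathcal F(D^2 v_\eps) \le 0$ yields
\[
\mathcal M^+_{\lambda/n,\Lambda}(D^2 w^\eps) \ge \mathcal F(D^2 u^\eps) - \mathcal F(D^2 v_\eps) \ge 0,
\]
and a standard Jensen-type argument upgrades the a.e.\ inequality to the viscosity sense in the interior of $K \cap B_{1-C\eps^{1/2}}$.

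The main obstacle is the Neumann condition on $\p K$, where one cannot split a single test function cleanly between the two phases. The trick is to use the inf-convolution structure of $v_\eps$ directly. Suppose a $C^2$ function $\psi$ touches $w^\eps$ strictly from above at $x_0 \in \p K$ with $-\nabla\psi(x_0)$ interior to the tangent cone $K_{x_0}$; by the definition of $v_\eps$ there exists $y_0 \in \overline{K\cap B_1}$ such that the paraboloid $q(x) := v(y_0) + \tfrac{1}{2\eps}|x-y_0|^2$ satisfies $v_\eps \le q$ everywhere with equality at $x_0$. Then $\Phi := q + \psi$ is a $C^2$ function that touches $u^\eps$ strictly from above at $x_0$, and
\[
-\nabla\Phi(x_0) = \tfrac{1}{\eps}(y_0 - x_0) + \bigl(-\nabla\psi(x_0)\bigr).
\]
The first summand lies in the closed tangent cone $\overline{K_{x_0}}$ because $K$ is convex and $y_0, x_0 \in \overline K$; the second is interior to $K_{x_0}$ by hypothesis; so the sum is interior to $K_{x_0}$. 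This violates the Neumann subsolution property of $u^\eps$ established in Proposition \ref{reg}, which provides the required contradiction and completes the verification that $w^\eps \in \underline{\mathcal S}_\Lambda$.
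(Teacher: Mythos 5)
Your proof is correct and takes essentially the same route as the paper: regularize by sup- and inf-convolution, observe that the interior Pucci inequality for the difference follows by standard Alexandrov--Jensen arguments, and verify the Neumann condition at a boundary point $x_0$ by exploiting the tangent paraboloid of the convolution whose gradient $\tfrac{1}{\eps}(y_0-x_0)$ lies in $\overline{K_{x_0}}$. The only cosmetic difference is that the paper works directly with the tangent paraboloid (from below) of the difference $u^\eps - v_\eps$, of the form $a - \tfrac{1}{\eps}|x-z_0|^2$ with $z_0 = \tfrac{1}{2}(y_1+y_2) \in \overline K$, whereas you transfer the test function onto $u^\eps$ by adding the paraboloid tangent to $v_\eps$ from above and then invoke Proposition \ref{reg}; both hinge on exactly the same geometric observation about convexity of $K$.
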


\begin{proof} First we check that $u^\eps-v^\eps\in \underline {\mathcal S}_\Lambda$. Indeed, $u^\eps-v^\eps$ is a subsolution in the interior by the previous lemma. Also, as in the proof above, at a boundary point $x_0 \in \p K \cap B_1$, $u^\eps-v^\eps$ has a quadratic tangent polynomial touching by below at $x_0$ of the form
$$ a - \frac 1 \eps |x-z_0|^2, \quad z_0 \in \overline K,$$
which shows the Neumann condition holds on $\p K$. The conclusion follows by letting $\eps \to 0$ in $u^\eps-v^\eps\in \underline {\mathcal S}_\Lambda$.
\end{proof}
 
 Next we obtain the existence and uniqueness for the Neumann problem in $K$.
  
 \begin{prop} \label{EXU}Assume that $K$ intersects $\p B_1$ transversally. Then the Dirichlet-Neumann problem \eqref{Ne} has a unique solution.
 \end{prop}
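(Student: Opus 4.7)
The plan is to argue by Perron's method, with uniqueness being a direct consequence of the comparison principle already in hand. The only substantive new ingredient is the construction of barriers that respect both boundary pieces at corner points.

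\emph{Uniqueness.} If $u_1,u_2$ both solve \eqref{Ne}, each is simultaneously a sub- and a supersolution of \eqref{Ne}, so Proposition~\ref{comp} gives $u_1-u_2,\,u_2-u_1\in\underline{\mathcal S}_\Lambda(K\cap B_1)$. Since both differences vanish on $\p B_1\cap\overline K$, the maximum principle of Lemma~\ref{lem:max} forces $u_1\equiv u_2$ on $\overline{K\cap B_1}$. No data on $\p K$ is needed because the appropriate one-sided Neumann condition is already encoded in $\underline{\mathcal S}_\Lambda$.

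\emph{Existence via Perron.} Define
$$\mathcal A=\bigl\{v\in C(\overline{K\cap B_1}):\ v\ \text{supersolution of \eqref{Ne}},\ v\ge\varphi\ \text{on}\ \p B_1\cap\overline K\bigr\},$$
which is nonempty (large constants belong) and closed under finite minima, and let $u(x):=\inf_{v\in\mathcal A}v(x)$. The classical Perron bump arguments then show that $u$ is a continuous viscosity solution: if $u$ failed to be a subsolution at an interior point or at some $x_0\in\p K\cap B_1$, a strict test function touching from above could be perturbed into an element of $\mathcal A$ strictly below $u$ in a neighborhood, violating the infimum; and $u$ is automatically a supersolution because it is the pointwise infimum of supersolutions, which is preserved in the viscosity sense thanks to Proposition~\ref{reg} (the sup-convolution preserving the Neumann sign on $\p K$). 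Standard H\"older regularity from Lemma~\ref{cal} yields continuity of $u$ up to $\p K\cap B_1$.

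\emph{Continuous attainment of $\varphi$ on $\p B_1\cap\overline K$.} For each $x_0\in\p B_1\cap\overline K$ and each $\eps>0$ the aim is to exhibit an upper barrier
$$w^+_\eps(x)=\varphi(x_0)+\eps+C_\eps\,\psi_t(x)\,\in\,\mathcal A$$
with $\psi_t$ the radial barrier \eqref{psit} centered at a suitable $y\notin\overline{B_1}$ and vanishing at $x_0$, the constant $C_\eps$ chosen so large (via uniform continuity of $\varphi$) that $w^+_\eps\ge\varphi$ on $\p B_1\cap\overline K$; symmetrically a lower barrier $w^-_\eps$ dominated by every element of $\mathcal A$ is built from $-\psi_t$ with the appropriate center. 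Taking $\eps\to 0$ then gives $u(x_0)=\varphi(x_0)$. At an interior point $x_0\in\p B_1\cap K$ one places $y$ just outside $\overline{B_1}$ along the ray through $x_0$ and $\p K$ is never touched. The main obstacle is at a corner point $x_0\in\p B_1\cap\p K$, where $\psi_t$ must in addition satisfy the correct Neumann sign on $\p K\cap B_r(x_0)$. This forces $y-x$ to lie in the interior of the tangent cone $K_x$ (or in its complement, depending on the sign of the barrier) for every $x\in\p K\cap B_r(x_0)$, which in turn requires $y$ to be placed in an open wedge determined by the relative positions of $\p B_1$ and $\p K$ at $x_0$. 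Transversality of $\p B_1$ and $\p K$, assumed in the statement, is precisely what guarantees that this wedge has nonempty interior uniformly in $x_0$, so that such a $y$ exists at a definite distance from $x_0$; without it the wedge could degenerate and the barrier construction would fail.
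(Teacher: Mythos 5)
Your proof takes essentially the same Perron-method approach as the paper (you use the infimum of supersolutions, the paper the supremum of subsolutions — an immaterial duality), and your treatment of the uniqueness step and the barrier construction at corner points, with transversality guaranteeing a suitable center $y\in (K\setminus\overline{B_1})\cap B_r(x_0)$, matches the paper's reasoning. However, there is a genuine gap in how you handle continuity of $u$ and the supersolution property.

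You assert that $u$ ``is automatically a supersolution because it is the pointwise infimum of supersolutions, which is preserved in the viscosity sense thanks to Proposition~\ref{reg},'' and then close by saying ``Standard H\"older regularity from Lemma~\ref{cal} yields continuity of $u$.'' Two problems. First, the pointwise infimum of a family of viscosity supersolutions is upper semicontinuous but not lower semicontinuous in general, and the viscosity supersolution property does not pass to the raw infimum; one must either work with the lower semicontinuous envelope $u_*$ and then compare $u_*$ with $u^*$, or (as the paper does) replace the competitors by a regularized family with a common modulus of continuity so that $u$ is a \emph{uniform} limit and therefore continuous from the start. Proposition~\ref{reg} is about the sup-convolution of a single subsolution remaining a subsolution — it does not address the infimum of a family. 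Second, invoking Lemma~\ref{cal} to get continuity is circular: that lemma applies to $u\in\mathcal S_\Lambda(K\cap B_1)$, i.e.\ to a function that is already known to be a continuous sub- and supersolution, which is exactly what has not yet been established. The paper closes this gap by a concrete device: after replacing $v\in\mathcal A$ by $\tilde v=\max\{v,\underline\varphi\}$ (bounded between explicit barriers), it applies a $\rho$-regularization — a sup-convolution with a controlled modulus $\rho$ extracted from the boundary barriers — which produces admissible competitors with a fixed modulus of continuity on $\overline{K\cap B_1}$; Arzel\`a–Ascoli then gives $u$ as a uniform limit, so $u$ is continuous, lies in $\mathcal A$, and the remaining verification that $u$ is also a supersolution is straightforward. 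You should incorporate this step (or an equivalent USC/LSC envelope argument with the comparison from Proposition~\ref{comp}) to make the existence proof complete.
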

 Here we say that $K$ intersects $\p B_1$ transversally if at any intersection point $x_0 \in \p K \cap \p B_1$ the tangent cone $K_{x_0}$ of $K$  is not included in the tangent cone of $B_1$ at $x_0$. In particular in any small ball $B_r(x_0)$ and $r >0$ small, there exists a ball of comparable size included in $K \setminus B_1$, i.e. $B_{\mu r}(y) \subset (K \setminus B_1) \cap B_r(x_0)$ with $\mu$ a small constant depending only on $K$. 
 
 \begin{proof}
The uniqueness is a consequence of Proposition \ref{comp} above, and the maximum principle of Lemma \ref{lem:max}.

The existence follows by Perron's method. We sketch some of the details. 

We define 
$$ \mathcal A:=\left\{v \in C(\overline {K \cap B_1}) | \quad \mbox{$v$ is a subsolution}, \quad v \le \varphi \quad \mbox{on} \quad \p B_1 \cap \overline K \right\},$$
and show that 
\begin{equation}\label{usup}
u = \sup_{v \in \mathcal A} v,
\end{equation}
solves \eqref{Ne}. By the remark above, it follows that all points in $\p B_1 \cap \overline K$ are regular for the Dirichlet problem in the sense that for each $x \in \p B_1 \cap \overline K$ there exists a supersolution $\psi_x$ which vanishes at $x$ and is positive everywhere else in $\overline {B_1 \cap K}$. The function $\psi_x$ is obtained as the infimum over suitable family of explicit barriers as in \eqref{psit}. These functions give the existence of continuous barriers $\underline \varphi \le \overline \varphi$, with
$\underline \varphi \in \underline {\mathcal S}_\Lambda$ a subsolution and $ \overline \varphi \in \overline {\mathcal S}_\Lambda$ a supersolution defined in $\overline{B_1 \cap K}$ which agree with $\varphi$ on $\p B_1 \cap \overline K$.
After replacing $v \in \mathcal A$ by $$\tilde v= \max \{v, \underline \varphi\} \in \mathcal A, \quad \quad \tilde v \le \overline \varphi,$$ we may assume that all $v$ in \eqref{usup} have a continuous modulus of continuity $\rho(r)$ on $\p B_1 \cap \overline K $. Furthermore, we may assume that $\rho$ is smooth outside the origin, and then we replace each $v$ as above by its $\rho$-regularization
$$ \bar v(x):= \max_{y \in \overline{K \cap B_1}} v(y)- \rho(|x-y|), \quad \quad x \in \overline{K \cap B_1}.$$
As in the proof of Proposition \ref{reg} one can check that $\bar v \in \mathcal A$ and $\bar v= \varphi$ on $\p B_1 \cap \overline K$. Moreover, $\bar v \ge v$ and $\bar v$ has a fixed modulus of continuity in $\overline{K \cap B_1}$. By Arzela-Ascoli theorem, we find that $u$ in \eqref{usup} is the uniform limit of a sequence of such $v_k$'s and therefore $u \in \mathcal A$ as well. Now it is straightforward to verify that $u$ is a supersolution as well.
 \end{proof}
 
 We investigate further regularity of a solution $u$ of \eqref{Ne}. Next we remark that the weak Harnack inequality holds for functions in the $S_\Lambda$ classes.
 
 \begin{prop}[Harnack inequality] \label{prop:Harnack} Assume $u \ge 0$.

a) There exists $\eps>0$ universal such that if $u \in \overline {\mathcal S}_\Lambda(K \cap B_1)$ then
$$\left(\int_{K \cap B_1} u^ \eps dx\right)^\frac 1 \eps \le C \inf_{ \overline K \cap B_{1/2}} u.$$
b) If $u \in \underline {\mathcal S}_\Lambda(K \cap B_1)$ and $p>0$ then
$$\sup_{ \overline K \cap B_{1/2}} u \le C(p) \left(\int_{K \cap B_1} u^ p dx\right)^\frac 1 p \quad \mbox{in} \quad \overline K \cap B_{1/2}.$$
c) If $u \in {\mathcal S}_\Lambda(K \cap B_1)$ then
$$u(x) \le Cu(y) \quad \mbox{for all} \quad x,y \in \overline K \cap B_{1/2}.$$
\end{prop}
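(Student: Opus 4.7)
The plan is to adapt the Caffarelli-Cabr\'e proof of the weak Harnack inequality to the Neumann setting on a convex cone. Part (c) is a formal consequence of (a) and (b): for a non-negative $u \in \mathcal{S}_\Lambda$, (b) with $p=\varepsilon$ together with (a) give
\begin{equation*}
\sup_{\overline K\cap B_{1/2}} u \le C(\varepsilon)\Big(\int_{K\cap B_1}u^\varepsilon\,dx\Big)^{1/\varepsilon}\le C'\inf_{\overline K\cap B_{1/2}} u.
\end{equation*}
Hence the task reduces to (a) and (b), which I would prove from a Neumann-adapted ABP maximum principle together with a Calder\'on-Zygmund covering argument.

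For the ABP estimate, I would first regularize a non-negative supersolution $v\in\overline{\mathcal S}_\Lambda(K\cap B_1)$ by the inf-convolution counterpart of Proposition~\ref{reg}, obtaining a semiconcave $v_\varepsilon$ that remains a Neumann supersolution on a slightly smaller domain. Applying the standard area-formula argument to the concave envelope $\Gamma$ of $\min(v_\varepsilon,0)$ then yields the ABP inequality, provided the contact set $\{\Gamma=v_\varepsilon\}$ stays in the interior of $K$. This holds by the same mechanism used in the proof of Proposition~\ref{reg}: at a would-be contact point $x_0\in\partial K$ the supporting hyperplane of $\Gamma$ would have gradient outside the tangent cone $K_{x_0}$, contradicting $v_\nu\ge 0$. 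Thus no boundary integral appears and the ABP mimics the Dirichlet case.

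With the ABP in hand, part (a) follows the Caffarelli-Cabr\'e measure-to-pointwise scheme, and (b) is obtained by the dual argument for subsolutions. The dyadic decomposition is performed on pieces of the form $B_r(x)\cap K$; since $K$ is a cone, dilations about $0$ reproduce the same geometry exactly, and the quantitative interior ball $B_\delta(y)\subset K$ with $|y|=1$ supplies the uniform ``room inside $K$'' needed to spread positivity via the barrier argument already used in Lemma~\ref{cal}.

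The main obstacle, I expect, is the Calder\'on-Zygmund stacking in the presence of a corner of arbitrary aperture on the cross-section $K_{n-m}$: one must verify that the classical doubling and stacking estimates go through uniformly, with constants depending only on $n,\lambda,\Lambda,\delta$ and not on the smoothness of $K_{n-m}$. I expect this to be handled by working in dyadic annuli $\{2^{-k}<|x|<2^{-k+1}\}\cap K$ and chaining estimates across scales using cone self-similarity about $0$.
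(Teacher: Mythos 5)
Your route is genuinely different from the paper's, and considerably heavier. The paper does \emph{not} develop a boundary ABP estimate or a Calder\'on--Zygmund decomposition adapted to $K$. Instead it observes that $K\cap B_1$ is a connected Lipschitz domain, so the \emph{interior} weak Harnack inequality, applied to a Whitney-type family of balls compactly contained in $K\cap B_1$ and chained across the domain, already produces the bound $\bigl(\int_{K\cap B_1}u^\varepsilon\bigr)^{1/\varepsilon}\le C\inf_{B_{\delta'}(y)}u$ for an interior ball $B_{\delta'}(y)$ with $B_{2\delta'}(y)\subset K\cap B_1$. The only genuinely new ingredient is a single barrier comparison, as in Lemma~\ref{cal}: comparing $u$ with a multiple of $\psi(x)=|x-y|^{-M}-(3/4)^{-M}$ on $(B_{3/4}(y)\cap K)\setminus B_{\delta'}(y)$ propagates $\inf_{B_{\delta'}(y)}u$ to $\inf_{\overline K\cap B_{1/2}}u$, because $\nabla\psi$ at any $x\in\partial K$ points toward the interior point $y$ and hence $\psi$ is an admissible Neumann test function. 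Parts (b) and (c) are then deduced by the standard implications. This shortcut entirely sidesteps the boundary measure-estimate machinery and is what keeps the constants independent of the smoothness of the cross-section $K_{n-m}$.

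On the substance of your own argument, the key step is stated too optimistically. It is true, by exactly the mechanism in Proposition~\ref{reg}, that the inf-convolution $v_\varepsilon$ of a supersolution remains a Neumann supersolution on a slightly smaller set; but from this you cannot conclude that a contact point $x_0\in\partial K$ of the concave envelope $\Gamma$ must be ruled out because ``the supporting hyperplane of $\Gamma$ has gradient outside $K_{x_0}$.'' The gradients $\nabla\Gamma$ on the contact set are deliberately made to sweep out a full neighbourhood of directions in the ABP argument, so at a boundary contact point there is no a priori reason for the affine support (after subtracting the bump barrier) to violate the Neumann inequality; the convolution argument controls touching by paraboloids centered at the realizing point $y_0$, not touching by arbitrary affine functions. (Also, for a supersolution the Neumann condition is $v_\nu\le 0$, not $v_\nu\ge 0$.) One can still salvage an ABP-type statement by noting $\partial K$ has Lebesgue measure zero so boundary contact points contribute nothing to the area-formula integral, but that requires different bookkeeping than what you describe, and you would still have to verify the Calder\'on--Zygmund stacking uniformly in the aperture of $K_{n-m}$. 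Given these obstacles, I would recommend the paper's route: interior weak Harnack on the Lipschitz domain plus the explicit radial barrier.
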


The constants $C$ above depend additionally on $\lambda, \Lambda, n, \delta$.
\begin{proof}We only sketch the proof. It suffices to establish part a) since the implications $a) \Rightarrow b) \Rightarrow c)$ follow by standard arguments as in \cite{CC}. 

Since $K \cap B_1$ is a connected Lipschitz domain, the interior weak Harnack inequality gives the desired $L^\eps$ bound provided the infimum in the right hand side is taken over an interior ball $B_{\delta'}(y)$, with $B_{2\delta'}(y) \subset K \cap B_1$. It remains to show that
$$ \inf_{B_{\delta'}(y)} u \le C \inf_{ \overline K \cap B_{1/2}} u.$$
We argue similarly as in Lemma \ref{cal} and compare $u$ with a multiple of the radial subsolution 
$$ \psi(x):= |x-y|^{-M} - (3/4)^{-M}, \quad \quad \mathcal M_{\frac \lambda n, \Lambda}^-(D^2 \psi) >0,$$
in $(B_{3/4}(y) \cap K) \setminus B_{\delta'}(y)$. This gives the desired inequality by choosing $y$ sufficiently close to the origin.
\end{proof}

 The next lemma contains the key observation towards the $C^{1,\alpha}$ regularity result Theorem \ref{T08}. First, notice that by the interior $C^{1,\alpha}$-regularity estimates, if $u$ is a solution to \eqref{Ne}, then $$w:=|\nabla u|^2$$ is well defined in the interior. We extend $w$ to $\p K$ by upper semicontinuity, i.e.
 $$ w(x_0) = \limsup_{x \in K, \, x \to x_0} w(x), \quad x_0 \in \p K \cap B_1.$$ 
 
 \begin{lem} \label{nabu}Let $u$ be a viscosity solution to \eqref{Ne}. Then, $$w:=|\nabla u|^2 \in \underline {\mathcal S}_\Lambda(K\cap B_1).$$

 \end{lem}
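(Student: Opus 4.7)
The plan is to verify the two defining properties of $\underline{\mathcal S}_\Lambda(K \cap B_1)$ from Definition \ref{sla2} separately: the interior inequality $\mathcal M^+_{\frac\lambda n,\Lambda}(D^2 w) \ge 0$ in $K \cap B_1$, and the viscosity Neumann inequality $w_\nu \ge 0$ on $\partial K \cap B_1$ in the sense of Definition \ref{defNR}. The interior $C^{1,\alpha}$ regularity of $u$ for fully nonlinear uniformly elliptic equations makes $w$ continuous inside $K\cap B_1$, and the upper-semicontinuous extension to $\partial K$ described in the lemma puts $w$ in the right function class to be a subsolution.

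For the interior I would run the classical Bernstein-type calculation. For a $C^3$ classical solution of $\mathcal F(D^2 u)=0$, differentiating the equation in direction $e_k$ shows that each $u_{e_k}$ satisfies the linear equation $a_{ij} u_{ijk}=0$, where $a_{ij}=\partial_{M_{ij}}\mathcal F(D^2 u)$ has eigenvalues in $[\lambda/n,\Lambda]$. A direct computation gives
\[
a_{ij} w_{ij} \;=\; 2\, a_{ij} u_{ki} u_{kj} \;=\; 2\,\mathrm{tr}\!\bigl(A (D^2 u)^2\bigr) \;\ge\; 0,
\]
so $\mathcal M^+_{\frac\lambda n,\Lambda}(D^2 w)\ge a_{ij}w_{ij}\ge 0$. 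To transfer this to viscosity solutions, I would invoke the standard fact that each directional derivative $u_e$ belongs to $\mathcal S_\Lambda$ by linearization, and then use the pointwise identity $L((u_e)^2)=2u_e L(u_e)+2 a_{ij}(u_e)_i(u_e)_j$ (whose first term vanishes and second is non-negative) to conclude $(u_e)^2 \in \underline{\mathcal S}_\Lambda$; summing over an orthonormal basis yields $w\in\underline{\mathcal S}_\Lambda$ inside $K\cap B_1$. The sup-convolution $u^\eps$ of Definition \ref{supconv}, combined with Alexandrov's twice differentiability theorem for semi-convex functions, is what makes the above argument rigorous.

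For the Neumann condition I would proceed by approximation, starting from the smooth model case. If $\partial K$ were $C^2$ at $x_0$ and $u\in C^2(\overline K)$ near $x_0$, differentiating the boundary identity $u_\nu=0$ along an arbitrary tangential direction $T$ yields $u_{T\nu}=II(T,\nabla u)$, where $II$ is the second fundamental form of $\partial K$ with respect to the inner normal, non-negative by convexity of $K$. Since $u_\nu=0$ forces $\nabla u$ to be tangential on $\partial K$, this gives
\[
w_\nu \;=\; 2\sum_{T} u_T \, u_{T\nu} \;=\; 2\, II(\nabla u,\nabla u)\;\ge\; 0.
\]
For a general convex cone $K$ and a viscosity solution $u$, I would approximate $K$ from inside by a sequence of smooth strictly convex cones $K^\eps$, solve the corresponding Neumann problem in $K^\eps \cap B_1$ via Proposition \ref{EXU} with boundary data compatible with $u$ on $\partial B_1$, apply the classical $C^{2,\alpha}$ regularity up to the smooth boundary of $K^\eps$ to run the smooth computation above on the approximant $u^\eps$, and obtain $|\nabla u^\eps|^2 \in \underline{\mathcal S}_\Lambda(K^\eps \cap B_1)$. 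Proposition \ref{stb}, combined with the uniform interior $C^{1,\alpha}$ estimates that force $\nabla u^\eps \to \nabla u$ locally uniformly inside $K\cap B_1$, would then let me pass to the limit $\eps\to 0$ and transfer the subsolution property to $w$.

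The principal obstacle I anticipate is the boundary step: one must construct the smooth approximating cones $K^\eps$ and compatible data so that the stability result applies uniformly despite the non-smooth limiting geometry, and one must check that the upper-semicontinuous envelope of $|\nabla u^\eps|^2$ at the corners and edges of $\partial K$ inherits the sub-Neumann condition in the limit. This is delicate precisely because the second fundamental form $II$ that powers the smooth computation is not classically defined at the corners of $K$, so the preservation of $w_\nu\ge 0$ at such points must come purely from the viscosity-theoretic stability of subsolutions under decreasing limits of convex sets rather than from a pointwise identity.
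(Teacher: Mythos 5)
Your strategy --- a Bernstein computation in the interior plus a second-fundamental-form computation on a smooth boundary, followed by approximation of $K$ by smooth cones --- is genuinely different from the paper's, which never differentiates the equation or invokes any second-order boundary identity. The paper instead uses the sup-convolution $u^\eps$ directly: by Proposition \ref{reg} $u^\eps$ is a subsolution, by Proposition \ref{comp} $(u^\eps-u)/\eps \in \underline{\mathcal S}_\Lambda$, and the local uniform convergence $(u^\eps - u)/\eps \to \tfrac12|\nabla u|^2$ transfers the subsolution property (both interior and Neumann) to $w$ without ever touching the geometry of $\partial K$ or the regularity of $u$ beyond $C^{1,\alpha}_{loc}$. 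Boundedness of $w$ up to $\partial K$ is obtained via an $L^p$ bound on $(u^\eps-u)/\eps$ coming from the interior estimate $|\nabla u| \lesssim d(x)^{-1}$ plus the weak Harnack inequality of Proposition \ref{prop:Harnack}.

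The central gap in your proposal is the appeal to ``classical $C^{2,\alpha}$ regularity up to the smooth boundary of $K^\eps$''. For a general uniformly elliptic $\mathcal F\in\mathcal E(\lambda,\Lambda)$ without convexity or concavity, $C^{2,\alpha}$ regularity fails even in the interior, and the Neumann reflection across a smooth $\partial K^\eps$ produces a solution of a different operator on the reflected side, so the reflected function is at best $C^{1,\alpha}$ across the interface. Consequently you cannot run the pointwise identity $w_\nu=2\,II(\nabla u,\nabla u)$ on the approximants: there is no $C^2$ solution to differentiate. A closely related problem is that even granting $u^\eps\in C^2(\overline{K^\eps})$, your limit $\eps\to 0$ offers no uniform control of $|\nabla u^\eps|$ near the degenerating corner of $K^\eps$, while the very conclusion you are after ($w$ bounded near $\partial K$, coming from the weak Harnack inequality once $w\in\underline{\mathcal S}_\Lambda$ is known) is what would supply that control --- the argument as structured is circular. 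Finally, the interior step is stated too loosely: the identity $L((u_e)^2)=2u_e L(u_e)+2a_{ij}(u_e)_i(u_e)_j$ is valid for a fixed linear operator $L$ acting on a $C^2$ function, but for a viscosity solution $u_e$ is only known to lie in the Pucci class $\mathcal S_\Lambda$ (a degenerate inclusion, not an equation $a_{ij}(u_e)_{ij}=0$ with measurable $a_{ij}$), so the passage from ``$u_e\in\mathcal S_\Lambda$'' to ``$(u_e)^2\in\underline{\mathcal S}_\Lambda$'' needs the same sup-convolution machinery that the paper uses, at which point one may as well follow the paper's route and avoid the Bernstein computation altogether.
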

 
 In Lemma \ref{nabu} the definition of the class $\underline {\mathcal S}_\Lambda$ is extended to the class of upper semicontinuous functions. This is a standard generalization and the previous results concerning $\underline {\mathcal S}_\Lambda$ carry through to this setting as well.  
 
\begin{proof} Since $u \in C^{1,\alpha}_{loc}(K \cap B_1)$, we find that $$ \frac{u^\eps - u}{\eps} \quad \to  \quad \frac 12 |\nabla u|^2=w, \quad \quad \quad \mbox{as $\eps \to 0$,}$$
 where $u^\eps$ is the sup-convolution as in Definition \ref{supconv}. This convergence is uniform only on compact sets of $K \cap B_1$, which means that $w$ is a subsolution in the interior of $K \cap B_1$. It remains to check the viscosity definition for $w$ at points on $\p K$. However it is not yet clear if $w$ is bounded near $\p K$.

To establish the boundedness of $w$ in $K \cap B_{1/2}$ we use the following two facts about the quantities that appear in the convergence above:

a) if $|\nabla u|^2 \le M$ in $B_{2r}(x) \cap K$, with $r \ge C\eps^{1/2},$ then $$ u^{\eps} - u \le \frac M 2 \, \eps \quad \mbox {in} \quad B_r(x)\cap K,$$

b) conversely, if this inequality is satisfied for all $\eps$ small, then $|\nabla u|^2 \le M$ in $B_r(x)\cap K$.

Part a) is a consequence of $u(y)-u(x) \le M^{\frac 12}|x-y|$ and part b) of the pointwise convergence in the beginning of the proof.

The interior gradient estimates for the equation $\mathcal F(D^2u)=0$ (see \cite{CC}) imply that $$|\nabla u(x)| \le C \|u\|_{L^\infty(K\cap B_1} d(x)^{-1}, \quad \mbox{in} \quad B_{3/4} \cap K,$$ where $d(x)$ is the distance from $x$ to the boundary $\p K$. By a) above
$$\frac{u^\eps - u}{\eps} \le C \left(\min\{d(x), \eps^{1/2}\}\right )^{-2},$$
and the right hand side has bounded $L^p$ norm for some small fixed $p>0$. Since the left hand side is in $\underline {\mathcal S}_\Lambda$ (see Proposition \ref{comp}), the Weak Harnack inequality Proposition \ref{prop:Harnack} implies that it is uniformly bounded in $K \cap B_{1/2}$. Thus, $w$ is uniformly bounded in $K \cap B_{1/2}$ by part b) above. 

Finally, assume by contradiction that a test function $\psi$ as in Definition \ref{sla2} touches $w$ strictly by above at some boundary point $x_0 \in \p K$. Then by properties a) and b) above it follows for sufficiently small $\eps$ that $\psi+ const.$ touches $2(u^\eps - u)/\eps$ by above at some point $x_{\eps}$ close to $x_0$, and we contradict Proposition \ref{comp}.\end{proof}

Next we show that the gradient $\|\nabla u\|_{L^\infty}$ must decay when restricting to smaller scales provided that the convex set $K$ has a modulus of strict convexity at $0$. In order to quantify the modulus of strict convexity, we consider the collection of all inner unit normals $\nu(x)$ to all supporting planes to $K$ at the points $x \in \p K \cap B_{1/2}$ and require that they are not contained in a set of zero measure on the unit sphere $\p B_1$. Another equivalent way is to say that there exists a supporting plane to $K$ at some point in $\p K \cap B_{1/2}$ which strictly separates from $\p K \cap (B_1 \setminus B_{1/2})$. 

To make this notion precise, we assume that for some small constant $\delta>0$, 
\begin{equation}\label{Stc}
B_\delta(\nu(x_0))  \cap \p B_1\quad \subset \quad  \{\nu(x),\, x \in \p K \cap B_{1/2}\} \quad \subset \quad \p B_1,
\end{equation}
for some point $x_0 \in \p K \cap B_{1/2}$. 

\begin{lem}\label{L06} Let $u$ be a solution to \eqref{Ne}. Assume, in addition to \eqref{KK}, that the strict convexity condition \eqref{Stc} holds. Then 
$$\|\nabla u\|_{L^\infty(K\cap B_{1/2})} \le (1-c(\delta)) \|\nabla u\|_{L^\infty(K \cap B_{1})}.$$

\end{lem}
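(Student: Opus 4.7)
The plan is a compactness-plus-rigidity argument built on Lemma \ref{nabu}, which provides $w := |\nabla u|^2 \in \underline{\mathcal S}_\Lambda(K \cap B_1)$. After scaling, normalize $\|\nabla u\|_{L^\infty(K \cap B_1)} = 1$; the goal becomes $\sup_{K \cap B_{1/2}} w \le 1 - c(\delta)$. Assume for contradiction that this fails along a sequence: there exist convex sets $K_k$ satisfying \eqref{KK} and \eqref{Stc} with the same $\delta$, operators $\mathcal F_k \in \mathcal E(\lambda, \Lambda)$, and solutions $u_k$ of the associated Neumann problem with $\|\nabla u_k\|_{L^\infty(K_k \cap B_1)} = 1$ and $\sup_{K_k \cap B_{1/2}} |\nabla u_k|^2 \to 1$.

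Using the uniform boundary $C^\alpha$ estimate of Lemma \ref{cal}, the standard interior $C^{1,\alpha}$ estimates for $\mathcal F_k$, and Hausdorff compactness of the convex sets, pass to a subsequence to obtain $K_k \to K^*$, $\mathcal F_k \to \mathcal F^*$, $u_k \to u^*$ uniformly on compact subsets of $\overline{K^*} \cap B_1$, and $\nabla u_k \to \nabla u^*$ uniformly on interior compact subsets. By stability (Proposition \ref{stb}), $u^*$ solves the Neumann problem in $K^*$ for $\mathcal F^*$, and \eqref{KK}, \eqref{Stc} pass to the limit (possibly with a slightly smaller $\delta$). Setting $w^* := |\nabla u^*|^2$, extended upper-semicontinuously to $\partial K^*$ as in Lemma \ref{nabu}, we have $w^* \le 1$ in $K^* \cap B_1$ while $\sup_{K^* \cap \overline{B_{1/2}}} w^* = 1$.

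Since $w^* \in \underline{\mathcal S}_\Lambda(K^* \cap B_1)$, either the interior strong maximum principle (if the sup is attained at an interior point) or a Hopf-type lemma using $w^*_\nu \ge 0$ on $\partial K^*$ (if it is attained at a boundary point) forces $w^* \equiv 1$ throughout $K^* \cap B_1$. Hence $|\nabla u^*| \equiv 1$ in the interior. Pick any interior point $p$, set $e := \nabla u^*(p)$, and consider $f := \nabla u^* \cdot e$, which lies in the interior class $\mathcal S_\Lambda$ (being a limit of difference quotients of $u^*$ in direction $e$). Then $f \le 1$ with $f(p) = 1$, so by the interior strong maximum principle $f \equiv 1$. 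Thus $\nabla u^* \equiv e$ and $u^*(x) = u^*(0) + e \cdot x$ is linear.

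For this linear $u^*$, the viscosity Neumann condition on $\partial K^* \cap B_1$ reduces to $e \cdot \nu(x) = 0$ at every boundary point admitting an inward unit normal $\nu(x)$. But \eqref{Stc} guarantees that $\{\nu(x) : x \in \partial K^* \cap B_{1/2}\}$ contains an open spherical cap of directions, whose elements span $\R^n$; no nonzero $e$ can be orthogonal to all of them --- contradiction. The main technical obstacle is the Hopf-type lemma for $\underline{\mathcal S}_\Lambda$ at a boundary maximum under the Neumann condition; it is handled by placing a radial barrier of the type \eqref{psit} in an interior ball of $K^*$ tangent to $\partial K^*$ at the maximum point, with the gradient of the barrier on $\partial K^*$ pointing into the tangent cone, so that it is an admissible viscosity test function for the Neumann condition.
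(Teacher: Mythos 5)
Your proposal takes a genuinely different route from the paper. The paper argues directly and quantitatively: it picks an interior ball $B_\delta(z)\subset K\cap B_1$, shows $|\nabla u(z)|\le 1-\mu$ (by contradiction, using interior Harnack on $u_e$ to get a linear approximation and then exploiting \eqref{Stc} via an explicit test function at a well-chosen boundary point), and then propagates the interior gradient drop to $K\cap B_{1/2}$ using the weak Harnack inequality for $1-|\nabla u|^2\in\overline{\mathcal S}_\Lambda$. You instead run a compactness/blow-up argument, reduce to a rigidity statement for the limit $u^*$, and conclude that $u^*$ would have to be linear, which \eqref{Stc} forbids. The rigidity endgame is correct in spirit (the reduction $|\nabla u^*|\equiv 1\Rightarrow u^*$ linear via the directional derivative $f=\nabla u^*\cdot e\in\mathcal S_\Lambda$ is fine, and a spherical cap of normals cannot be annihilated by a nonzero $e$), but the compactness scheme has a real gap and a misstatement, described below.

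The main gap is the step ``$\sup_{K^*\cap\overline{B_{1/2}}} w^*=1$''. You only have $\nabla u_k\to\nabla u^*$ on interior compact subsets, while the near-maximizing points of $w_k$ may escape to $\partial K_k$; the upper semicontinuous extension $w^*$ on $\partial K^*$ is then defined from the interior values of $|\nabla u^*|$, and nothing forces it to attain $1$. The clean fix, which also makes your max-principle/Hopf step unnecessary, is to apply the weak Harnack inequality (Proposition \ref{prop:Harnack} part a) to $1-w_k\in\overline{\mathcal S}_\Lambda(K_k\cap B_1)$: since $\inf_{\overline{K_k}\cap B_{1/2}}(1-w_k)\to 0$, the $L^\eps$ norm of $1-w_k$ over $K_k\cap B_1$ tends to $0$, so $w_k\to 1$ in measure, and interior uniform convergence then gives $w^*\equiv 1$ on $K^*\cap B_1$ outright. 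This is exactly the tool the paper uses (applied directly, without compactness). Separately, your Hopf-type argument as phrased (``interior ball tangent to $\partial K^*$ at the maximum point'') fails at corners of a general convex $K^*$, where no tangent interior ball exists; the correct version, as in the proof of Lemma \ref{cal}, places the radial barrier about an interior center $z$ where $1-w^*$ is bounded below, not a tangent ball at the boundary maximum. Finally, passing \eqref{Stc} to the Hausdorff limit (with a slightly smaller $\delta$, as you note) does require a short argument that limits of supporting normals are supporting normals of the limit set; this is true but should be said, since \eqref{Stc} is the only ingredient that rules out the linear limit.
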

\begin{proof}
Assume that 
\begin{equation}
\label{gradn}
\|\nabla u\|_{L^\infty(K\cap B_1)}=1,
\end{equation} 
and let $z$ be an interior point to $K \cap B_1$, such that $$B_\delta(z) \subset K \cap B_1.$$  

We claim that $|\nabla u(z)| \le 1- \mu$ for some $\mu>0$ small, depending on $\delta$ and the universal constants $\lambda$, $\Lambda$, $n$.

 Then by the interior $C^{1,\alpha}$ estimates we have $$\|\nabla u\|_{L^\infty(B_{c(\mu)}(z))} \le 1 - \frac \mu 2,$$ for some $c(\mu)>0$ small. The weak Harnack inequality part a) applied to $$1-|\nabla u|^2 \ge 0$$ which belongs to $\overline {\mathcal S}_\Lambda(K \cap B_1)$ (see Lemma \ref{nabu}) gives the desired bound
$$ |\nabla u| \le 1- c'(\mu)  \quad \mbox{in} \quad K \cap B_{1/2}.$$

To establish the claim, assume by contradiction that $$|\nabla u(z)| \ge 1-\mu,$$ with $\mu$ sufficiently small. Let $e$ denote the unit direction of $\nabla u(z)$. Then
$$ u_e \le 1 \text{ in }K\cap B_1, \quad u_e(z) \ge 1- \mu,$$
and the interior Harnack inequality applied to $u_e$ gives that
$$|u_e -1| \le C(\delta,\sigma) \mu \quad \mbox{in} \quad K \cap B_{7/8} \cap \{ d_{\p K}(x) > \sigma \},$$
where $d_{\p K}(x)$ denotes the distance function from $x$ to $\p K$.
This and \eqref{gradn}, imply that if $\sigma$ and $\mu$ are chosen sufficiently small (depending on a parameter $\eps = \eps(\delta)$ to be specified later), then $u$ is well approximated by a linear function of slope $e$, 
\begin{equation}\label{e1}
|u - u(0)-e \cdot x| \le \eps \quad \mbox {in} \quad B_{7/8} \cap K.
\end{equation}
On the other hand, our hypothesis \eqref{Stc} implies that there exists $\rho>0$ small depending on $\delta$, and  $x_1 \in \p K \cap B_{3/4}$ such that
$$ \mbox{either} \quad \nu \cdot e > c(\delta) \quad \mbox{or} \quad \nu \cdot e <- c(\delta) \quad \forall x \in \p K \cap B_\rho(x_1).$$
The proof of this fact follows easily by compactness. Indeed, if this statement fails for a sequence of sets $K_m$ with $\rho_m \to 0$, $c_m(\delta) \to 0$, then (up to a subsequence) the $K_m$'s converge to a limiting set $K_\infty$ whose boundary (in $B_{3/4}$) contains line segments in the $e$ direction. Then $K_m$ cannot satisfy property \eqref{Stc} for $m$ large.

Let us assume for simplicity that the second alternative above holds. Then, by \eqref{e1}, the polynomial $P(x) + const.$ with
$$P(x)= u(0) + e \cdot x + C(\rho) \eps \left(|x-x_1|^2 + (x-x_1) \cdot \nu_1 - M [ ( x-x_1) \cdot \nu_1]^2 \right),$$
touches $u$ by above at some point in $\overline K \cap B_\rho(x_1)$. The contact point must be on $\p K$ and then, if $\eps$ is chosen sufficiently small, depending on $c(\delta)$,
$$ \nu(x) \cdot \nabla P = \nu(x) \cdot (e + O(\eps)) <0,$$
and we reach a contradiction.
\end{proof}

We remark that in the proof above we showed that either the $L^\infty$ norm of $\nabla u$ decays when restricting from $B_1$ to $B_{1/2}$, or that both $u$ and $K$ are well approximated by a linear function and a cylindrical convex set respectively.  

In the case when $K$ is a cone with vertex at $0$, we can iterate Lemma \ref{L06} indefinitely and obtain the following result.

\begin{cor} Assume that $u$ solves \eqref{Ne} and that $K$ is a convex cone included in a half-space i.e.
$$K \subset \{x_1 \ge \delta |(x_1,..,x_n)| \},$$
for some $\delta>0$. Then $$ |\nabla u(x)| \le C \|u\|_{L^\infty} |x|^\alpha, \quad \quad \forall \, x \in K \cap B_{1/2},$$
with $C$, $\alpha$ constants depending on $n$, $\lambda$, $\Lambda$, $\delta$.
\end{cor}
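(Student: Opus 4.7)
The plan is to iterate Lemma \ref{L06} at the dyadic scales $r_k = 2^{-k}$ around the vertex, using the conic invariance of $K$ to guarantee that the hypotheses apply at every scale with the same constants. The critical observation is that the strict convexity condition \eqref{Stc} is satisfied at $x_0 = 0 \in \p K \cap B_{1/2}$ with a parameter depending only on $\delta$: the inner unit normals to supporting planes of $K$ at the vertex form precisely $K^* \cap \p B_1$, where $K^*$ is the dual cone. The assumption $K \subset \{x_1 \ge \delta|x|\}$ dualizes to
\[
K^* \supset \{v \in \R^n : v_1 \ge \sqrt{1-\delta^2}\,|v|\},
\]
a circular cone around $e_1$, so $K^* \cap \p B_1$ contains a Euclidean ball $B_{\delta'}(e_1) \cap \p B_1$ for some $\delta' = \delta'(\delta) > 0$. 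Taking $\nu(0) := e_1$ then verifies \eqref{Stc}.

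For the iteration, set $M(r) := \|\nabla u\|_{L^\infty(K \cap B_r)}$. Lemma \ref{nabu} together with the weak Harnack inequality of Proposition \ref{prop:Harnack} gives the base bound $M(1/2) \le C\|u\|_{L^\infty}$. For each $0 < r \le 1/2$ the rescaling $v(x) := u(rx)$ solves $\tilde{\mathcal F}(D^2 v) = 0$ in $K \cap B_1$ with $\tilde{\mathcal F}(N) := r^{-2}\mathcal F(r^2 N) \in \mathcal E(\lambda, \Lambda)$ and satisfies the same Neumann condition on $\p K$ since $rK = K$. Applying Lemma \ref{L06} to $v$ on $K \cap B_1$ (the condition \eqref{Stc} is scale-invariant as $K$ is a cone) and rescaling back via $\nabla v(x) = r \nabla u(rx)$ yields $M(r/2) \le (1-c)\, M(r)$ with $c = c(\delta) > 0$.

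Iterating this estimate from $r = 1/2$ produces $M(2^{-k-1}) \le C(1-c)^k \|u\|_{L^\infty}$ for all $k \ge 0$. Writing $(1-c)^k = 2^{-k\alpha}$ with $\alpha := \log_2(1/(1-c)) > 0$, and for any $x \in K \cap B_{1/2}$ choosing $k$ with $2^{-k-2} \le |x| \le 2^{-k-1}$, one concludes $|\nabla u(x)| \le M(2^{-k-1}) \le C |x|^\alpha \|u\|_{L^\infty}$. The main step to watch is the verification of \eqref{Stc} at the vertex with a parameter depending only on $\delta$; once this is secured, the conic invariance furnishes the same decay constant $c$ at every scale and the geometric decay iterates indefinitely.
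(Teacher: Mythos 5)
Your proof follows exactly the paper's route: the base gradient bound comes from Lemma \ref{nabu} combined with the weak Harnack inequality, and the H\"older decay then follows by iterating Lemma \ref{L06} in dyadic balls, with the conic invariance of $K$ giving the same constants at every scale. Your explicit verification of the strict convexity condition \eqref{Stc} at the vertex via the dual cone is a helpful detail the paper leaves implicit; it is correct. One small slip: for $v(x)=u(rx)$ one has $D^2v(x)=r^2D^2u(rx)$, so the rescaled operator should be $\tilde{\mathcal F}(N)=r^2\mathcal F(r^{-2}N)$ rather than $r^{-2}\mathcal F(r^2N)$, but this is purely notational and does not affect the argument since the corrected operator still lies in $\mathcal E(\lambda,\Lambda)$.
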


Indeed, the bound 
\begin{equation}\label{gradest}
\|\nabla u\|_{L^\infty(K \cap B_{1/2})} \le C \|u \|_{L^\infty(K \cap B_1)}
\end{equation} follows by Lemma \ref{nabu} and then iterating Lemma \ref{L06} in dyadic balls.

Next we consider the setting of Theorem \ref{T08} in which $K$ is a general cone which is split as  $K_{n-m} \times \R^m$ with $K_{n-m} \subset \R^{n-m}$ strictly included in a half-space, see \eqref{ck}. We state a version of Lemma \ref{L06}. from which our main Theorem \ref{T08} follows via standard arguments.

\begin{lem}\label{L07}Assume that $u$ solves \eqref{Ne} and that  $K=K_{n-m}\times \R^m$ as in \eqref{ck}. Then,
$$\|\nabla u - \tau_m\|_{L^\infty(K \cap B_{1/2})} \le (1-c(\delta)) \|\nabla u\|_{L^\infty(K \cap B_{1})}.$$
with $\tau_m$ a vector in $\{0\} \times \R^m$.
\end{lem}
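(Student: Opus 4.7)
The plan is to mimic the proof of Lemma \ref{L06} after exploiting the translation invariance of $K=K_{n-m}\times \R^m$ in the $\R^m$-directions. Normalize $\|\nabla u\|_{L^\infty(K\cap B_1)}=1$ and fix an interior point $z$ with $B_\delta(z)\subset K\cap B_1$. Write $\nabla u(z)=p'+p''$ with $p'\in \R^{n-m}\times\{0\}$ and $p''\in \{0\}\times\R^m$, and set $\tau_m:=p''$. Since $D^2(\tau_m\cdot x)=0$ and every inner normal $\nu$ to $\p K=\p K_{n-m}\times \R^m$ lies in $\R^{n-m}\times\{0\}$ (so that $\tau_m\cdot\nu=0$), the function $\tilde u:=u-\tau_m\cdot x$ solves the same Neumann problem \eqref{Ne} as $u$, and by construction $\nabla \tilde u(z)=p'\in \R^{n-m}\times\{0\}$.

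The key claim is that $|\nabla \tilde u(z)|=|p'|\le 1-\mu$ for some $\mu=\mu(\delta)>0$. Suppose not, and set $e:=p'/|p'|$, a unit vector in $\R^{n-m}\times\{0\}$. Since $\tau_m\perp e$ one has $\tilde u_e=u_e$, with $u_e\le 1$ everywhere on $K\cap B_1$ and $u_e(z)>1-\mu$. The interior Harnack inequality applied to $1-u_e$ on a $\sigma$-interior subdomain of $K\cap B_{7/8}$, combined with the gradient bound $|\nabla \tilde u|\le 2$, shows that $\tilde u$ is approximated on $K\cap B_{7/8}$ by the affine function $\tilde u(0)+e\cdot x$ up to an arbitrarily small error $\eps$, exactly as in the proof of Lemma \ref{L06}. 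The geometric ingredient needed to derive the contradiction is the analogue of the strict convexity condition \eqref{Stc} restricted to the $\R^{n-m}$-subspace: since $K_{n-m}$ is a convex cone strictly contained in $\{x_1\ge\delta|x|\}\subset\R^{n-m}$, its dual cone has nonempty interior, so a compactness argument produces, for the unit vector $e$, a smooth point $x_1\in \p K\cap B_{3/4}$ and a radius $\rho>0$ such that every inner normal $\nu(x)$ at points $x\in \p K\cap B_\rho(x_1)$ satisfies $\nu(x)\cdot e\le -c(\delta)$ (possibly after replacing $e$ by $-e$). The same perturbed barrier polynomial $P$ as in the proof of Lemma \ref{L06} then touches $\tilde u$ from above at a contact point in $\p K\cap B_\rho(x_1)$, where $\nabla P\cdot\nu=e\cdot\nu+O(\eps)<0$, contradicting the Neumann condition on $\tilde u$.

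To propagate the pointwise bound $|\nabla \tilde u(z)|\le 1-\mu$ to the desired $L^\infty$ bound on $K\cap B_{1/2}$, I would use the interior $C^{1,\alpha}$ estimates to obtain $|\nabla \tilde u|\le 1-\mu/2$ on a small ball around $z$, and then apply the weak Harnack inequality of Proposition \ref{prop:Harnack} to the nonnegative supersolution $\|\nabla \tilde u\|_{L^\infty(K\cap B_1)}^2-|\nabla \tilde u|^2\in \overline{\mathcal S}_\Lambda$ (justified by Lemma \ref{nabu}), as in the final step of the proof of Lemma \ref{L06}. The main obstacle I anticipate is the uniform (in $K_{n-m}$) extraction of a smooth boundary point $x_1$ with a quantitative neighborhood $B_\rho(x_1)$ in which all inner normals to $\p K$ point substantially away from $e$; once that geometric statement is in hand, the argument of Lemma \ref{L06} is recovered, because the subtraction of $\tau_m$ forces the critical gradient direction $e$ at $z$ to lie in $\R^{n-m}\times\{0\}$, which is precisely the subspace in which $K_{n-m}$ provides the strict convexity used by the barrier construction.
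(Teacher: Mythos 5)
Your derivation of the ``key claim'' $|p'|=|\nabla\tilde u(z)|\le 1-\mu$ is essentially correct and captures the geometric heart of Lemma~\ref{L06} adapted to the cylinder $K=K_{n-m}\times\R^m$: the observation that $\tau_m$ is tangential so $\tilde u=u-\tau_m\cdot x$ solves the same Neumann problem, and the fact that the strict-convexity/barrier argument applies to any putative gradient direction $e\in\R^{n-m}\times\{0\}$ because the normals to $\p K_{n-m}$ fill out a quantitative neighborhood in the $(n-m-1)$-sphere. The gap is in the propagation step. You propose applying the weak Harnack inequality to $\|\nabla\tilde u\|^2_{L^\infty(K\cap B_1)}-|\nabla\tilde u|^2\ge 0$, but this can only yield $\|\nabla\tilde u\|_{L^\infty(K\cap B_{1/2})}\le (1-c)\|\nabla\tilde u\|_{L^\infty(K\cap B_1)}$, which is \emph{not} the desired $\le (1-c)\|\nabla u\|_{L^\infty(K\cap B_1)}=1-c$. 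Since $\tau_m=p''$ can have length close to $1$ while $\nabla u$ elsewhere in $K\cap B_1$ need not stay close to $\nabla u(z)$ (when $|\nabla u(z)|\le 1-\mu$ there is no Harnack control forcing $\nabla u$ to be nearly constant), the quantity $\|\nabla\tilde u\|_{L^\infty(K\cap B_1)}=\|\nabla u-\tau_m\|_{L^\infty(K\cap B_1)}$ can be as large as $2$, and then the weak-Harnack estimate does not close.

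The paper avoids this by a case split on $|\nabla u(z)|$ itself, not on $|p'|$. If $|\nabla u(z)|\le 1-\mu$, one simply takes $\tau_m=0$ and applies weak Harnack to $1-|\nabla u|^2\ge 0$, exactly as in Lemma~\ref{L06}. If instead $|\nabla u(z)|>1-\mu$, then the interior Harnack inequality forces $\nabla u$ to be $\eps$-close to the unit direction $e$ of $\nabla u(z)$ throughout $K\cap B_{7/8}$, and the barrier argument shows $e$ must be tangential; one then sets $\tau_m=e$, observes $\|u-u(0)-\tau_m\cdot x\|_{L^\infty(K\cap B_{7/8})}\le \eps$, and concludes via the $L^\infty$-to-gradient bound \eqref{gradest} applied to $u-u(0)-\tau_m\cdot x$, i.e. $\|\nabla u-\tau_m\|_{L^\infty(K\cap B_{1/2})}\le C\eps\le 1/2$. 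That last step --- going through the $L^\infty$ smallness of $u-u(0)-\tau_m\cdot x$ and then the gradient estimate, rather than through a weak Harnack inequality for $|\nabla\tilde u|^2$ --- is the ingredient your write-up is missing; with it, the choice of $\tau_m$ is also pinned down correctly in each of the two regimes.
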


\begin{proof} The proof follows from the arguments of Lemma \ref{L06}: 

If $\|\nabla u\|_{L^\infty(K \cap B_1)} =1$ then, as in the proof of Lemma \ref{L06}, either $$ \|\nabla u\|_{L^\infty(K \cap B_{1/2})}  \le 1 - c'(\mu),$$ 
in which case we are done by taking $\tau_m=0$, or
$$ |u - u(0)-e \cdot x| \le \eps \quad \mbox {in} \quad B_{7/8} \cap K,$$
for some $\eps$ sufficiently small. The proof of Lemma \ref{L06} shows that we may take $ e = \tau_m \in \{0\} \times \R^m$ to be tangential to $\p K$ (after relabeling $\eps$ if necessary). Note that for $\tau \in \{0\}\times\R^m$, then $u- u(0)- \tau \cdot x$ solves the same problem \eqref{Ne}. Therefore, we can apply the gradient bound
$$\|\nabla u - \tau_m\|_{L^\infty(K \cap B_{1/2})} \le C \|u - u(0)-\tau_m \cdot x\|_{L^\infty(K \cap B_{7/8})} \le C' \eps \le 1/2,$$
which gives the conclusion.
\end{proof}

{\it Proof of Theorem \ref{T08}.} We apply \eqref{gradest} and then iterate Lemma \ref{L07} in dyadic balls to find that 
$$|\nabla u(x) - \nabla u(0)| \le C \|u\|_{L^\infty(K \cap B_1)} \, |x|^\alpha, \quad \quad \nabla u \in \{0\} \times \R^m,$$
which gives the desired conclusion.
\qed

\

We conclude this section with the following observation. Using the method from Section 3 and the local solvability of the Dirichlet-Neumann problem, Proposition \ref{EXU}, we can now extend Perron's method to the the two-phase Dirichlet-Neumann problem problem in a convex set $K$ (see \eqref{fbn1}): 

\begin{equation}
\left\{
\begin{array}{ll}
\mathcal F^+(D^{2}u)=0, & \hbox{in} \quad B_1^+(u):=\{ u>0 \} \cap B_1 \cap K,\\
\  &  \\
\mathcal F^-(D^{2}u)=0, & \hbox{in} \quad  B_1^-(u):= \{ u \le 0 \}^\circ \cap B_1 \cap K, \\
\  &  \\
|\nabla u^{+}|=G(|\nabla u^{-}|), & \hbox{on} \quad  F(u):= \partial_{\overline K} \{u>0\} \cap B_1,\\
\  &  \\
u_\nu=0, & \hbox{on} \quad \p K \cap B_1 \setminus F(u),\\
\end{array}
\right.  \label{fbn}
\end{equation}
and $u= \varphi$ on $\p B_1 \cap \overline K$. 

\begin{prop} Assume that $K$ intersects $\p B_1$ transversally. Then the two-phase Dirichlet-Neumann problem \eqref{fbn} has a viscosity solution. 
\end{prop}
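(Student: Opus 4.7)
The plan is to mimic the Perron construction from the proof of Theorem \ref{main}, replacing the basic tools from interior fully nonlinear theory by their Dirichlet-Neumann analogues developed in Section \ref{sec:Neumann}. Specifically, I would define
$$\mathcal A := \{ v \in C(\overline{K\cap B_1}) \, : \, v \text{ is a supersolution to \eqref{fbn}}, \ v \ge \varphi \text{ on } \partial B_1 \cap \overline K\},$$
which is nonempty since it contains large constants, and set $u(x) = \inf_{v \in \mathcal A} v(x)$ for $x \in \overline{K \cap B_1}$. The claim is that $u$ solves \eqref{fbn}.

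First I would verify that $u$ attains $\varphi$ continuously on $\partial B_1 \cap \overline K$. Using the transversality hypothesis, at each $x_0 \in \partial B_1 \cap \overline K$ one can find a ball $B_{\mu r}(y) \subset (K \setminus \overline{B_1}) \cap B_r(x_0)$. The radial barriers as in \eqref{Fsub}, centered at such $y$, provide sub/supersolutions whose gradient at $x \in \partial K$ points in the direction of $y - x \in K_x$, so the Neumann condition on $\partial K$ is compatible, and the usual translation argument with the monotone family $\{tz_0\}$ pins down both the one-sided limits. Then I would adapt Steps 2-3 of the proof of Theorem \ref{main}: at a base point $x_0 \in B_1 \cap K$ where $u(x_0) = \sigma > 0$, I would run the dyadic weak Harnack argument for $v \in \mathcal A$ using the weak Harnack inequality for the $\mathcal S_\Lambda$ classes established in Proposition \ref{prop:Harnack} (which is valid up to $\partial K$). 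The crucial replacement is that in the set $\Omega_\eps = \{x : \mathrm{dist}(x, \{v \le 0\}) > \eps\} \cap K$, one now replaces $v$ by the solution $w$ to the mixed Dirichlet-Neumann problem $\mathcal F^+(D^2 w) = 0$ in $\Omega_\eps$, $w = v$ on $\partial\Omega_\eps \cap K$, $w_\nu = 0$ on $\partial\Omega_\eps \cap \partial K$, whose solvability is guaranteed by Proposition \ref{EXU} (one should verify that $\Omega_\eps$ still satisfies the transversality condition, which follows from convexity of $K$ together with the exterior-ball-type property of $\Omega_\eps$). The existence of a modified $v$ that is still a supersolution of \eqref{fbn} is what keeps the iteration inside $\mathcal A$.

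For Step 4, I would show $u \in \mathcal A$ by checking (a) the supersolution property on the free boundary $F(u)$ at interior points in $K$ via the same touching argument as in Theorem \ref{main}, now using that the radial test function $a\phi^+ - b\phi^-$ moves into $v_n \in \mathcal A$ by translation in the $-\nabla \phi(x_0)$ direction; (b) the Neumann condition on $\partial K \cap \{u > 0\}$ and on $\partial K \setminus \overline{\{u>0\}}$ via the stability of the $\mathcal S_\Lambda$ class under infima (Proposition \ref{comp} plus Definition \ref{defNR}), since on these open subsets $u$ is the least supersolution of a pure Dirichlet-Neumann problem; and (c) the mixed condition on $Z = \partial K \cap \partial \{u>0\}$ in the sense of part 2) of Section \ref{bo}, which again follows from touching by admissible test functions $a \psi^+ - b\psi^-$ and moving them by small translations in the direction $-\nabla \psi(x_0) \in K_{x_0}^\circ$ --- condition a) in Definition \ref{defNR} guarantees that such translations stay inside $\overline K$ near $x_0$. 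Step 5 proceeds exactly as in Theorem \ref{main}: if the subsolution property failed at some $x_0 \in F(u)$, one perturbs the test function by $\eps(|x-x_0|^2 - r^2)$ and replaces $u$ by $\min\{u, a\tilde\phi^+ - b\tilde\phi^-\}$ in $B_r(x_0)$, producing a lower element of $\mathcal A$ and contradicting the definition of $u$. When $x_0 \in Z$, the same cutting procedure works provided the admissible test function from part 2) of Section \ref{bo} is used, since the perturbation can be arranged to preserve property a).

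The main obstacle I anticipate is the analysis at points of $Z = \partial K \cap \partial\{u>0\}$, where three things collide: the free boundary transmission, the Neumann condition, and the corner geometry of $K$. The touching/translation arguments used freely in the interior must respect both that the contact point stays in $\overline K$ and that the test function's gradient projects correctly onto the tangent cone $K_{x_0}$. The delicate point is the subsolution/supersolution checking in Steps 4-5 on $Z$: the restriction to test functions satisfying the tangent-cone condition a) is exactly what makes stability delicate (compare Lemma \ref{SS}), so one must be careful that the admissible perturbations available are rich enough to derive contradictions. Once this is sorted out, together with the fact that the local Dirichlet-Neumann solvability from Proposition \ref{EXU} can be invoked in every modification step, the proof reduces to a transcription of the five-step Perron argument of Theorem \ref{main}.
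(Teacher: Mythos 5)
Your proposal is correct and follows the same strategy the paper intends: run the five-step Perron argument from Theorem~\ref{main}, noting that the radial barriers $a\phi^+ - b\phi^-$ from~\eqref{Fsub}, centered at interior points of $K$, automatically satisfy the viscosity Neumann condition on $\p K$ (their gradients point toward the center, which lies in $K_x$ by convexity), so that boundary points on $\p K$ can be treated as if they were interior. The paper leaves all the details to the reader with exactly this remark, and your fleshing-out of the barrier construction, the Dirichlet-Neumann replacement in $\Omega_\eps$ via Proposition~\ref{EXU}, and the touching arguments on $Z = \p K \cap \p\{u>0\}$ is consistent with the intended argument.
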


The proof is essentially identical to the one of Theorem \ref{main} in Section 3. We only have to remark that we can treat points in $\p K \cap B_1$ as interior points. This is because 
the test functions/barriers of the form $a \phi^+ - b \phi^-$ that were used in the proof remain admissible due to the viscosity formulation of the Neumann boundary condition on $\p K$. We leave the details to the reader.

\section {The Transmission-Neumann problem in convex cones} 
In this section we investigate a transmission problem which arises in the linearization of our two-phase problem with Neumann boundary condition in a convex set $K$ satisfying \eqref{KK}. 
We assume that $K = K' \times \R$ with $K' \in \R^{n-1}$ a convex set and consider the problem, \begin{equation}\label{tran}\begin{cases}
\mathcal F^+(D^2 u) =0 \quad \text{in $K\cap B_1\cap \{x_n >0\}$}\\
\mathcal F^-(D^2 u) =0 \quad \text{in $K \cap B_1\cap \{x_n <0\}$}\\
u_n^+ + \gamma \, u_n^-=0 \quad \mbox{on $\overline K \cap \{x_n=0\}$, $\gamma > 0$,}\\
u_\nu=0 \quad \mbox{on $\p K \cap B_1 \cap \{x_n \neq 0\},$}
 \end{cases}\end{equation} where $u_n^{+}$ (resp. $u_n^-$) denotes the derivative of $u$ at points on $\{x_n=0\}$ in the $e_n$ (resp. $-e_n$) direction, with $u$ restricted to $\{x_n >0\}$ (resp. $\{x_n <0\}$.)
 
 We assume that $\gamma$ is bounded away from $0$ and infinity, say $\gamma \in [\delta, \delta^{-1}]$.

The interior regularity for the transmission problem was studied in \cite{DFS1}, where the authors established the $C^{1,\alpha}$ regularity of solutions in the tangential directions. In this section we extend their results up to the boundary $\p K$ for the Neumann problem. 
  
 Next we specify the viscosity definition of the transmission condition.

 \begin{defn}\label{def5.1}A function $u \in C(\overline K \cap B_1)$ is a viscosity solution to \eqref{tran} if 
 
 1) it satisfies the interior equations in the sense of Definition \ref{intdef}; 
 
 2) the Neumann condition holds in the sense of Definition \ref{defNR};
 
 3) $u$ cannot be touched by above (resp. below) at $x_0 \in K \cap \{x_n=0\}$ by a piecewise $C^2$ function $\psi$ that satisfies 
\begin{equation}\label{psi1}
\psi_n^+(x_0) + \gamma \, \psi^-_n(x_0) <0 \quad \text{(resp. $>0$)}.
\end{equation} 
Here again $\psi_n^{+}$ and $\psi_n^-$ denote the derivatives in the $e_n$ and $-e_n$ directions of $\psi$ restricted to $\{x_n >0\}$ and $\{x_n <0\}$.
As usual, we may assume that at the touching point 
\begin{equation}\label{psi2}
\mathcal M_{\frac \lambda n,\Lambda}^+(D^2 \psi^\pm) <0 \quad \mbox{ (resp. } \quad \mathcal M_{\frac \lambda n,\Lambda}^-(D^2 \psi^\pm) >0. \quad )
\end{equation}

4) at points $x_0 \in \p K' \times \{x_n=0\}$, $u$ cannot be touched by above (resp. below) by a piecewise $C^2$ function $\psi$ that satisfies 
\eqref{psi1}-\eqref{psi2} and
$\psi^\pm_\nu (x_0)<0$ (resp. $>0$)  in the sense of Definition \ref{defNR}.

\end{defn}

Similarly one can only define the notions of subsolution/supersolution for the problem \eqref{tran}.

 We remark that since $K$ is invariant in the $e_n$ direction, the condition $\psi^\pm_\nu (x_0)<0$ at $x_0 = (x_0',0)$ in 4) is the same as the Neumann condition for the restriction of $\psi$ to $\R^{n-1} \times \{0\}$, $$\nabla_{\nu'} \psi|_{\R^{n-1} \times \{0\}}  <0 \quad \mbox{at} \quad x_0' \in \p K'.$$ 
  It is not difficult to check that, for the touching by below property, it is sufficient to restrict the collection of test functions $\psi$ in 4) to include only quadratic polynomials of the form
  $$ A + p x_n^+ + q x_n^- + B \, Q (x-y), \quad y=(y',0) \in K',$$
 with
 $$ Q = \frac{n \Lambda}{\lambda} x_n^2 - |x'|^2, \quad \mbox{so that} \quad \mathcal M^-_{\frac \lambda n, \Lambda} (D^2 Q) >0,$$
and $$B<0 \quad \mbox{(resp. $>0$)} , \quad \quad p + \gamma q <0\quad \mbox{(resp. $>0$).}$$
We can also restrict to the analogous collection of test functions when checking for touching by above. The main result of this section is the following pointwise $C^{1,\alpha}$ estimate which is the analogue of Theorem \ref{T08} in the setting of the transmission problem \eqref{tran}.
As before, in addition to \eqref{KK}, we assume that $K$ is a cone of the form 
\begin{equation}\label{ck2}
K=K_{n-m} \times \R^m, \quad \quad m \ge 1,
\end{equation} with $K_{n-m}$ a convex cone in $\R^{n-m}$ satisfying $$ K_{n-m} \subset \{x_1 \ge \delta |(x_1,...,x_{n-m})| \} \subset \R^{n-m}, \quad \mbox{for some $\delta>0$.}$$

\begin{thm}\label{T09}
Assume that $K$ is a cone as above and that $u$ is a viscosity solution for the transmission problem \eqref{tran} in $\overline K \cap B_1$. If $\|u \|_{L^\infty(K\cap B_1)} \le 1$, then 
$$ |u(x) - u(0) - (px_n^+ + q x_n^- + \tau_m \cdot x) | \le C |x|^{1+\alpha},$$
with $\tau_m \in \{0\} \times \R^{m-1} \times \{0\}$ and $C$, $\alpha$ constants that depend on $\delta$, $n$, $\Lambda$, $\lambda$.
\end{thm}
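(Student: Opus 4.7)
The strategy mirrors Theorem \ref{T08}: I would prove a one-step decay lemma saying that either the gradient of $u$ decays by a fixed factor when passing from $B_1$ to $B_{1/2}$, or $u$ is close enough to an admissible linear profile that the decay can be recovered after subtracting it; then iterate in dyadic scales. The admissible linear profiles here are
\[
L(x) = p\, x_n^+ + q\, x_n^- + \tau_m \cdot x, \qquad p + \gamma q = 0, \quad \tau_m \in \{0\}\times\R^{m-1}\times\{0\},
\]
which are exact solutions of \eqref{tran}. Subtracting any such $L$ preserves \eqref{tran}, and the rescaling $r^{-1}u(rx)$ does as well since $K$ is a cone and $\{x_n=0\}$ is scale-invariant.

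\textbf{Step 1: Gradient bound up to the boundary.} Following Lemma \ref{nabu}, I would show that for each tangential direction $e\in\{0\}\times\R^{m-1}\times\{0\}$, the derivative $u_e$ is continuous across $\{x_n=0\}$ and satisfies the same transmission problem (since $\partial_e$ commutes with \eqref{tran} because both $K$ and the interface are invariant under $e$-translations). Then $(u_e)^2$ belongs to $\underline{\mathcal{S}}_\Lambda(K\cap B_1)$ up to $\partial K$, via the sup-convolution argument of Proposition \ref{reg} adapted to tangential translations. The Weak Harnack inequality Proposition \ref{prop:Harnack} yields an $L^\infty$ bound on $|\nabla' u|$, where $\nabla'$ is the gradient in the $\{0\}\times\R^{m-1}\times\{0\}$ directions. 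The $K_{n-m}$- and $e_n$-components of $\nabla u$ are controlled via the transmission identity $u_n^+ + \gamma u_n^- = 0$, the Neumann condition on $\partial K$, and the interior $C^{1,\alpha}$ estimates for the transmission problem from \cite{DFS1}, producing $\|\nabla u\|_{L^\infty(K\cap B_{1/2})} \leq C \|u\|_{L^\infty(K\cap B_1)}$.

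\textbf{Step 2: Decay lemma.} The analogue of Lemma \ref{L07} reads: \emph{there exist $c(\delta)>0$ and an admissible $L$ with $\|\nabla u - \nabla L\|_{L^\infty(K\cap B_{1/2})} \leq (1-c(\delta))\|\nabla u\|_{L^\infty(K\cap B_{1})}$.} I would prove this by the same dichotomy as in Lemma \ref{L06}. If the gradient norm does not already decay, the interior Harnack inequality applied to $u_e$ for a maximizing direction $e$ forces $u$ to be close to an affine function $u(0) + e\cdot x$ on most of $K\cap B_1$. The strict-convexity hypothesis on $K_{n-m}$ then forces the $K_{n-m}$-component of $e$ to vanish; otherwise the perturbed polynomial $P(x) = u(0)+e\cdot x+C(\rho)\varepsilon(|x-x_1|^2+(x-x_1)\cdot\nu_1-M[(x-x_1)\cdot\nu_1]^2)$ from Lemma \ref{L06} would touch $u$ from above at a point of $\partial K\cap\{x_n\neq 0\}$, contradicting the Neumann condition. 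Similarly, the transmission condition forces the $e_n$-component to split as $p x_n^+ + q x_n^-$ with $p+\gamma q=0$: if not, a piecewise-$C^2$ test function from the restricted family identified after Definition \ref{def5.1} detects the violation. The remaining directions $\{0\}\times\R^{m-1}\times\{0\}$ are free and supply $\tau_m$.

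\textbf{Step 3 and main obstacle.} Standard dyadic iteration of Step 2, applied to the rescalings $2^{k}(u-L_{k-1})(2^{-k}\cdot)$, produces a Cauchy sequence of profiles $L_k$ whose limit yields the $C|x|^{1+\alpha}$ estimate. The principal obstacle lies in Step 2: the dichotomy argument must simultaneously handle the transmission condition on $\{x_n=0\}$, the Neumann condition on $\partial K_{n-m}\times\R^m$, and their joint intersection on $\partial K_{n-m}\times\R^{m-1}\times\{0\}$. The test polynomials of Lemma \ref{L06} must be paired with the piecewise decomposition $p x_n^+ + q x_n^-$ so as to remain admissible in the sense of Definition \ref{def5.1}, and one must check that the tangential sup-convolutions in Step 1 interact well with both boundary conditions so that $|\nabla' u|^2 \in \underline{\mathcal{S}}_\Lambda$ all the way up to the corner edge $\partial K\cap\{x_n=0\}$.
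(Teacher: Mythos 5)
Your overall strategy matches the paper's (regularize, show a square of the tangential gradient lies in the relevant $\underline{\mathcal S}_\Lambda$-class, prove a one-step decay lemma, iterate dyadically, recover the $px_n^++qx_n^-$ piece at the end), but there is a genuine gap in Step~1 that you flag as the ``main obstacle'' without actually resolving it.

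The paper's proof needs an $L^\infty$ bound and a decay estimate for $\nabla_{x'}u$ over \emph{all} of $x'=(x_1,\dots,x_{n-1})$ --- including the $K_{n-m}$-directions along which the Neumann condition lives --- because Lemma~\ref{L510} converts the full $\|\nabla_{x'}u\|_{L^\infty}$ bound into closeness of $u$ to a one-dimensional profile $px_n^++qx_n^-$, and Lemma~\ref{L072} decays the full $x'$-gradient (producing $\tau_m$ with vanishing $K_{n-m}$-component as a consequence). To get $|\nabla_{x'}u|^2\in\underline{\mathcal S}^{tr}_\Lambda$ the paper sup-convolves in \emph{all} $x'$-directions (\eqref{uep}), which is not translation-invariant across $\pa K$; the whole difficulty lies in showing that the sup-convolution maximum never lands on $\pa K$. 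For points $y_0\in\pa K\cap\{x_n=0\}$ the naive touching function $\psi(z)=u^\eps(x)+\frac1{2\eps}|x'-z'|^2$ is useless because it has $\psi_n\equiv0$, hence $\psi_n^++\gamma\psi_n^-=0$ is not strictly negative and fails condition (4) of Definition~\ref{def5.1}. The paper gets around this with Lemma~\ref{L5.4}, an infinite-descent argument in a half-cylinder showing that such a touching would force $u$ to be dominated near $y_0$ by piecewise test functions with arbitrarily negative $e_n$-slope, which eventually does violate the transmission/Neumann condition. You do not anticipate anything playing the role of Lemma~\ref{L5.4}.

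Your proposed fix --- sup-convolving only in the free directions $\{0\}\times\R^{m-1}\times\{0\}$ --- cleanly sidesteps the corner-edge issue (those translations preserve both $K$ and $\{x_n=0\}$), but it only controls $|\nabla u|$ in those free directions. Your claim that ``the $K_{n-m}$- and $e_n$-components of $\nabla u$ are controlled via the transmission identity..., the Neumann condition on $\pa K$, and the interior $C^{1,\alpha}$ estimates from \cite{DFS1}'' is not a proof: the Neumann condition is a single scalar constraint per boundary point, and the DFS1 estimates are interior and degenerate at $\pa K$. Without the $K_{n-m}$-gradient bound up to $\pa K$ you cannot run the compactness/dichotomy of Step~2, nor can you invoke the analogue of Lemma~\ref{L510} to pass from an $x'$-gradient bound to the $px_n^++qx_n^-$ approximation. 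In short: you correctly locate the hard point (the interaction of the transmission condition, the Neumann condition, and their intersection on $\pa K\cap\{x_n=0\}$), but the argument as proposed does not close it.
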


The strategy to prove the Theorem above, mimics the one we followed in Section 4 for the Neumann problem. The main difference is that the problem in the interior of $K$ is no longer invariant under translations in the $e_n$ direction. Throughout the proof the constants appearing depend on $\delta$, $n$, $\Lambda$, $\lambda$.

In the first part of this section we only assume that $K = K' \times \R$ for some convex set $K'$ (not necessarily a cone), $0 \in \p K$ and $K \cap B_1$ contains a ball of radius $\delta$.

In this case we define the regularization $u^\eps$ as the sup-convolution of translations of $u$ in the first $n-1$ variables. We denote points
$$x=(x',x_n), \quad x' \in \R^{n-1},$$
and let  
\begin{equation}\label{uep}
u^\eps(x):= \max_{(y',x_n) \in \overline K \cap B_1} \left(u(y',x_n) - \frac{1}{2 \eps} |x'-y'|^2 \right), \quad \quad x \in \overline K \cap B_1.
\end{equation}
The inf-convolution $u_{\eps}$ is defined analogously.

Thus the graph of $u^\eps$ over $\overline K \cap B_1$ is obtained as the maximum over translations of the graph of $u$ by the family of vectors
$$ (z',0,- \frac{1}{2 \eps} |z'|^2) \quad \in \R^{n+1}, \quad \quad z' \in \R^{n-1}. $$
Below we will show that the maximum in the definition of $u^\eps(x)$ at any interior point $x \in K \cap B_1$ is achieved at an interior point $(y',x_n) \in K \cap B_1$. This implies that $u^\eps$ is a subsolution for \eqref{tran} in the interior of $K\cap B_1$,  since $u^\eps$ at $x$ has a translation of the graph of $u$ tangent by below (with $z'=x'-y'$ in the vector above). In fact, we show that $u^\eps$ is a subsolution up to the boundary.

\begin{lem}\label{L5.3}
If $u$ is a subsolution to \eqref{tran} with $\|u\|_{L^\infty(K\cap B_1)} \le 1$, then $u^\eps$ is a subsolution to \eqref{tran} in $\overline K \cap B_{1-C \sqrt \eps}$.
\end{lem}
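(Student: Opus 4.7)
The strategy is to adapt the sup-convolution argument of Proposition \ref{reg} to the tangential sup-convolution \eqref{uep}, exploiting that the transmission problem \eqref{tran} is invariant under translations in $x'$ and that the Neumann condition for $u$ on $\p K$ plays the same role it did there. The analysis splits by whether $x$ is in the interior of $K$ or on $\p K$, and by whether $x_n=0$ or not.

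The first and main step is to establish the interiority of the maximizer: for every $x\in K\cap B_{1-C\sqrt\eps}$ the maximizer $y'_*(x)$ in \eqref{uep} lies in the interior of $K'$. The bound $|y'_*(x)-x'|\le C\sqrt\eps$ reduces this to the case when $x'$ is close to $\p K'$. Assuming $y'_*\in\p K'$, the smooth function
$$\psi(w) := u^\eps(x) + \frac{1}{2\eps}|x'-w'|^2 + A(w_n-x_n)^2,$$
with $A$ large, touches $u$ from above at $y_*:=(y'_*,x_n)\in\p K$ and has $-\nabla\psi(y_*)=\eps^{-1}(x'-y'_*,0)$ interior to $K_{y_*}=K'_{y'_*}\times\R$ (since $x'$ interior to $K'$ forces $x'-y'_*$ into the interior of $K'_{y'_*}$). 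This contradicts Definition \ref{defNR} when $x_n\ne 0$; when $x_n=0$, a small piecewise correction of $\psi$ in $w_n$ engineered to enforce the strict transmission inequality $\psi_n^+ + \gamma\,\psi_n^-<0$ while keeping $-\nabla\psi$ interior to $K_{y_*}$ yields the corresponding contradiction with condition 4) of Definition \ref{def5.1}.

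Given this interiority, the interior equations and the transmission condition for $u^\eps$ follow by the standard shift: if $\varphi$ (or a piecewise $C^2$ test function in the transmission case) touches $u^\eps$ from above at an interior point $x_0$ of $K$, then
$$\tilde\varphi(w) := \varphi\bigl(w+(x'_0-y'_*,0)\bigr) + \frac{1}{2\eps}|x'_0-y'_*|^2$$
touches $u$ from above at the interior point $y_*=(y'_*,x_{0,n})$; since the shift is tangential it preserves both the Hessian and the interface $\{w_n=0\}$, so the subsolution property of $u$ at $y_*$ transfers verbatim to $u^\eps$ at $x_0$. For the Neumann condition at $x_0\in \p K\cap\{x_n\ne 0\}$ and the corner condition at $x_0\in \p K\cap\{x_n=0\}$ I would mimic the tangent-polynomial-from-below argument from Proposition \ref{reg}: on the slice $\{w_n=x_{0,n}\}$ the quadratic $p(x):=u(y'_*,x_{0,n})-\frac{1}{2\eps}|x'-y'_*|^2$ satisfies $p\le u^\eps$ with equality at $x_0$, and its tangential gradient $\nabla_{x'}p(x_0)=\eps^{-1}(y'_*-x'_0)$ lies in $K'_{x'_0}$ by convexity of $K'$. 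For any admissible test function $\psi$ with $-\nabla\psi(x_0)$ interior to $K_{x_0}=K'_{x'_0}\times\R$, the vector $-\nabla_{x'}(\psi-p)(x_0)$ is then interior and nonzero in $K'_{x'_0}$ (sum of an interior element and an element of $K'_{x'_0}$), which contradicts $\psi-p\ge 0$ along the slice exactly as in Proposition \ref{reg}; the corner case is handled analogously, the piecewise structure of $\psi$ restricting on the slice to a piecewise $C^2$ function in $x'$.

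The main obstacle is in the corner sub-case of the first step, where both $x_n=0$ and $x'$ is near $\p K'$: one must design a piecewise $C^2$ test function at $y_*=(y'_*,0)$ that realizes the strict Neumann and strict transmission inequalities of condition 4) of Definition \ref{def5.1} simultaneously while remaining above $u$ in a full neighborhood of $y_*$. All remaining cases are routine combinations of the two building blocks, the shifted-test-function transfer and the tangent-polynomial-from-below restricted to the slice $\{w_n=x_{0,n}\}$.
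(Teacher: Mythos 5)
Your overall architecture matches the paper's: (i) show the maximizer $y'_*(x)$ in \eqref{uep} lies in the interior of $K'$ for $x \in K \cap B_{1-C\sqrt\eps}$, so that $u^\eps$ inherits the interior/transmission conditions by a tangential shift; (ii) check the Neumann condition for $u^\eps$ on $\p K$ directly from the fact that $\nabla_{x'} u^\eps(x_0)$ points from $x_0'$ towards some $y'\in \overline{K'}$. Your part (ii) is essentially the paper's argument and is fine.

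The serious gap is in your first step, and it is not confined to the corner sub-case you flag at the end --- it infects the entire interiority argument. The sup-convolution \eqref{uep} is taken slice-by-slice in $x_n$, so the maximizer $y'_*(x)$ only gives you the one-sided bound
$u(w',x_n)\le u^\eps(x)+\tfrac{1}{2\eps}|x'-w'|^2$ on the slice $\{w_n=x_n\}$. There is no control whatsoever on $u(w',w_n)$ for $w_n\ne x_n$; the quantity $u^\eps(x',w_n)-u^\eps(x',x_n)$ is not quadratically small in $|w_n-x_n|$. Consequently the function $\psi(w)=u^\eps(x)+\tfrac{1}{2\eps}|x'-w'|^2+A(w_n-x_n)^2$ does \emph{not} touch $u$ from above at $y_*$: for any fixed $A$, the bound $u\le\psi$ can fail on a strip $|w_n-x_n|<\sqrt{2/A}$, and the constant $A$ cannot be sent to infinity while keeping a fixed test function. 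This means you have no admissible test function touching $u$ at the boundary point $y_*$, and there is nothing to contradict. The issue is not just designing a suitable piecewise correction near the corner; it is that even the starting upper bound is missing in the $x_n$ direction.

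The paper resolves this with two additional, non-trivial ingredients that your proposal does not contain. First, a comparison argument (using the barrier $\tfrac{1}{\eps}|x'|^2-\tfrac{1}{\eps}x'\cdot x_0'+M(|t|-t^2)$ on $K\cap \p B_\rho(y_0)$) upgrades the slice bound to a \emph{wedge} bound $u\le\tfrac{1}{\eps}|x'|^2-\tfrac{1}{\eps}x'\cdot x_0'+M|t|$ in a full neighborhood of $y_0$, with $M\gg 1$. Second --- and this is the key technical input --- Lemma~\ref{L5.4} is applied iteratively on both sides of $\{t=0\}$ to drive the wedge coefficient down from $+M$ to arbitrarily negative values on smaller and smaller scales. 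Only after this does one obtain a touching of $u$ at $y_0$ by a piecewise $C^2$ polynomial of the form $\mu|x'|^2-Mt^2-|t|-\tfrac{1}{\eps}x'\cdot x_0'$ whose tangential gradient violates the Neumann condition and whose transverse slopes violate the transmission inequality, contradicting Definition~\ref{def5.1}. Your proposal makes no reference to Lemma~\ref{L5.4} and provides no mechanism to make the transverse slope negative, so the argument as written cannot be completed.
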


Before we proceed with the proof of Lemma \ref{L5.3} we need a result concerning the behavior of solutions to linear equations in half-cylindrical domains of the type $K' \times [q, \infty)$ with $K' \subset \R^{n-1}$, which satisfy Dirichlet boundary conditions on the slice $K' \times \{q\}$ and Neumann boundary condition on the remaining part of the boundary $\p K' \times (q, \infty)$. Essentially we show that if the normal derivative of the boundary data at some point $z_0 \in \p K' \times \{q\}$ is negative, then the derivative of $u$ at $z_0$ along the $e_n$ direction is $- \infty$. 

Assume that $z_0=0$ for simplicity, and denote by $B_r^+=B_r \cap \{x_n >0\}$.

\begin{lem}\label{L5.4}
Assume that $u \in \underline S_{\Lambda}(K \cap B_1^+)$ is touched by above at $0$ by a quadratic function of the form
$$ \mu_1 |x'|^2 + a_1 x_n - x' \cdot x_0 ',$$
with $x_0'$ an interior point to the tangent cone of $K'$ at $0$, and for some constants $\mu_1$, $a_1$.
Then in $K\cap B_\rho^+$, for some small ball $B_\rho^+$ 
$$u \le \mu_2 |x'|^2 + a_2 x_n - x' \cdot x_0',$$
with $a_2 \le a_1 - \sigma$ for a constant $\sigma>0$ depending only on universal constants and the distance from $x_0'$ to the boundary of the tangent cone of $K'$ at $0$.
\end{lem}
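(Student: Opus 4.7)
My plan is to read Lemma \ref{L5.4} as a Hopf-type boundary-point estimate at the corner $0\in\partial K\cap\{x_n=0\}$. The driving mechanism is that the touching polynomial $P(x)=\mu_1|x'|^2+a_1x_n-x'\cdot x_0'$ has strictly negative normal derivative at $0$, namely $P_\nu(0)=-x_0'\cdot\nu_0\le-\delta_0$ where $\delta_0:=\mathrm{dist}(x_0',\partial T_0K')>0$; by convexity, in a fixed ball near $0$ every inner normal $\nu$ at any boundary point of $\partial K$ still satisfies $\nu\cdot x_0'\ge\delta_0/2$. Since $u_\nu\ge 0$ on $\partial K$ in the viscosity sense, and $u\le P$ with equality at $0$, the Neumann condition should force a strict positive gap $P-u\ge\sigma x_n$ in the interior, measured linearly in $x_n$. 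I would try to realize this gap by constructing an explicit classical supersolution barrier $\Psi$ in a small subdomain $D\subset K\cap B_1^+$ having $0$ on its boundary, and then conclude $u\le\Psi\le(a_1-\sigma)x_n+\mu_2|x'|^2-x'\cdot x_0'$ in $D$ via comparison.

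The natural ansatz is
$$\Psi(x)=(a_1-\sigma)x_n+M|x'|^2-x'\cdot x_0'-Nx_n^2,$$
with $\sigma>0$ small and depending only on $\delta_0$ and the universal constants, $M$ large (this will serve as $\mu_2$) chosen in terms of $\mu_1,a_1,\|u\|_{L^\infty}$, and $N\asymp M$ fixed so that the block-diagonal Hessian $2MI_{n-1}\oplus(-2N)$ yields $\mathcal M^+_{\lambda/n,\Lambda}(D^2\Psi)<0$ strictly. Because $K=K'\times\R$ has inner normals orthogonal to $e_n$, the concave term $-Nx_n^2$ does not enter the boundary derivative, and $\Psi_\nu=-x_0'\cdot\nu+2M(x'\cdot\nu)\le-\delta_0/2+2M\rho<0$ on $\partial K\cap B_\rho$ provided $\rho\le\delta_0/(8M)$. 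On the slice $\{x_n=0\}\cap D$ one has $\Psi(x',0)=-x'\cdot x_0'+M|x'|^2\ge P(x',0)\ge u(x',0)$ as soon as $M\ge\mu_1$. If also $\Psi\ge u$ on the remaining part of $\partial D$, the comparison principle (the extension of Lemma \ref{lem:max} to a mixed Neumann--Dirichlet domain, essentially Proposition \ref{comp} applied on $D$) gives $u\le\Psi$ in $D$; dropping the non-positive term $-Nx_n^2$ then produces the desired inequality.

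The hard part is exactly this domination of $u$ on the ``upper'' part of $\partial D$ without sacrificing the $\sigma$-improvement. A naive slab $D=K\cap B_\rho\cap\{x_n<h\}$ requires $\Psi(0,h)=(a_1-\sigma)h-Nh^2\ge u(0,h)$ near $x'=0$, while $u(0,h)$ is a priori only bounded by $a_1h$, which is precisely the estimate we are trying to improve, so the comparison loops back on itself. I would resolve this by shaping $D$ around the barrier rather than the barrier around $D$: either carve $D$ by a paraboloid of the form $\{x_n<\alpha|x'|^2\}$ (so that on the curved upper boundary $|x'|$ is automatically large and $M|x'|^2$ swamps $u\le\|u\|_{L^\infty}$), or supplement $\Psi$ with an auxiliary term, e.g. $C|x'|^2x_n$ or an exponential bump active only away from the $e_n$-axis, chosen so that it dominates $\|u\|_{L^\infty}$ on the part of $\partial D$ where the previous estimates fail while remaining a strict supersolution with strict negative Neumann derivative. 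The parameters are then fixed in a strict hierarchy: $\sigma$ from $\delta_0$ and universal constants only, then $M$ and $N$ from $\mu_1,a_1,\|u\|_{L^\infty}$, and finally the scale of $D$ (and of the final ball $B_\rho^+$) from all of the above. Since $\sigma$ is selected before $M,N,\rho$, it depends only on universal constants and $\delta_0$, as required.
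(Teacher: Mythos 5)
You correctly recognize the statement as a Hopf-type boundary estimate and, more importantly, you put your finger on exactly the obstruction: a single comparison in a naive slab $K\cap B_\rho\cap\{0<x_n<h\}$ needs $\Psi\ge u$ on the top face, but on the axis $x'=0$ the only a priori bound is $u(0,h)\le a_1h$, whereas $\Psi(0,h)<a_1h$, so the comparison is circular. That diagnosis is right. What is missing is a workable way out, and the two routes you sketch do not close the gap. The paraboloid domain $\{0<x_n<\alpha|x'|^2\}$ has the origin on its curved boundary, so $|x'|$ is \emph{not} automatically large there; worse, even if you prove $u\le\Psi$ below the paraboloid, the conclusion must hold in a full half-ball $K\cap B_\rho^+$, and the nontrivial content of the lemma is precisely the improvement $a_2<a_1$ along the axis $x'=0$, which lies entirely above the paraboloid and is never reached by your comparison. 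Any attempt to then run a second comparison above the paraboloid re-encounters the same uncontrolled top boundary. The ``auxiliary term'' alternative is left unspecified and doesn't by itself resolve the circularity.

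The paper's resolution is a genuinely different two-step structure that you are missing. After a degree-one dilation (so $\mu_1\ll1$ and $K$ is close to its tangent cone, ensuring $u\le\ell+\sigma$ throughout $B_1^+$ with $\ell=a_1x_n-x'\cdot x_0'$), Step~1 establishes the gain \emph{at a fixed interior slice} $\{x_n=\eta\}$, not at the origin, by comparing $u$ in the \emph{whole} domain $K\cap B_1^+$ with $\psi=\ell-\sigma(w-1)$, where $w\ge0$ is a Pucci subsolution with $w=0$ on $\partial B_1^+$ and $w\ge2$ on the slice. Because $w$ vanishes on all of $\partial B_1^+$ (both the sphere and $\{x_n=0\}$), the outer boundary inequality reduces to the trivial $u\le\ell+\sigma$ and the circularity is broken; the Neumann side is handled exactly as you describe, using that $-\nabla\psi$ has $x'$-component $x_0'+\sigma\nabla_{x'}w$ interior to the tangent cone of $K'$ for $\sigma$ small. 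This yields $u\le\ell-\sigma$ on $\{x_n=\eta\}\cap K\cap B_{1/2}$. Step~2 then does what your slab comparison was trying to do, but now legitimately: in the thin cylinder $(K'\cap B'_{1/2})\times[0,\eta]$ one compares $u$ with a local supersolution of the form $\ell+\sigma(-x_n-Cx_n^2+|x'|^2)$, and the top face $\{x_n=\eta\}$ is controlled by the Step~1 gain $u\le\ell-\sigma$ rather than by the a priori bound $u\le\ell$. In short: the key missing idea is to decouple the Hopf gain (obtained globally at a fixed height) from its local propagation to the corner, rather than trying to obtain both from one barrier in one domain.
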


Lemma \ref{L5.4} can be iterated indefinitely and we obtain that in sufficiently small balls, $u$ is bounded above by quadratic polynomials with arbitrarily large negative slope in the $e_n$ direction.

\begin{proof} After a homogeneous of degree 1 dilation we may assume that  $K$ is close to its tangent cone at $0$ and $\mu_1 \ll 1$ is sufficiently small, hence the upper bound is well approximated by the linear function
$$\ell(x):=a_1 x_n - x' \cdot x_0 ',$$
in $B_1$. First we claim that on the slice $x_n = \eta$, with $\eta$ a small universal constant to be specified later, we have the inequality
\begin{equation}\label{s5cl}
u \le \ell - \sigma \quad \mbox{on} \quad \{x_n = \eta\} \cap K \cap B_{1/2},
\end{equation}
for some $\sigma >0$ small.

For this we use the barrier 
$$ \psi:=\ell- \sigma (w-1) $$
with $w \in C^2$ in the domain $ B_1^+ \setminus B_{\delta} (y) \supset K \cap B_1^+$ so that 

1) $\mathcal M_{\frac\lambda n, \Lambda}^-(w) >0$; 

2) $w =0$ on $\p B^+_1$, $w =C$  on $\p B_\delta(y)$ with $C$ sufficiently large so that

3) $w \ge 2$ on $\{x_n = \eta\} \cap K \cap B^+_{1/2}$.

We compare $u$ and $\psi$ in $K \cap B_1^+$.
Notice that $u \le \ell +\sigma \le \psi$ on $\p(K \cap B_1^+) \setminus \p K$. On the remaining  portion of the boundary, $\p K \cap B^+_1$, we have
$$\nabla \psi=(-x_0',a_1) - \sigma \nabla w,$$ hence $-\nabla \psi$ points inside $K$ provided that $\sigma$ is chosen small (recall  that $x_0'$ is an interior direction for $K'$ at $0$.) 
We can conclude that $u \le \psi$ in the whole domain $K \cap B_1^+$, which gives the claim \eqref{s5cl}.

Next we extend the inequality \eqref{s5cl} in a neighborhood of the origin. For this we compare $u$ in the cylinder $(K' \cap B_{1/2}') \times [0, \eta]$ with the barrier function
$$ \phi:= \ell + \sigma (- x_n - n \Lambda x_n^2 + |x'|^2).$$
Notice that $\phi$  is a supersolution, and on the boundary of the domain we have

1) $\phi \ge \ell + \frac{\sigma}{8} \ge u$ when $x' \in \p B_{1/2}'$,  

2) $\phi \ge \ell + \sigma |x'|^2 \ge u$ on $x_n=0$, (since $\mu_1 \ll 1$)

3) $\phi \ge \ell - \frac{\sigma}{2} \ge u$ on $x_n= \eta$, (by \eqref{s5cl}), provided that $\eta$ is chosen small, universal.

We conclude that $u \le \phi$ in the interior of the cylinder, which gives the desired result.

\end{proof}

\begin{proof}[Proof of Lemma \ref{L5.3}]
We first show that $u^\eps$ is a subsolution in $K \cap B_{1- C \sqrt \eps}$. As remarked before the statement of the lemma, it suffices to prove the following claim.

\

{\it Claim:} If $x \in K \cap B_{1-C \sqrt \eps}$, the maximum in \eqref{uep} is realized at a point $y' \in K'$.

\

Assume by contradiction that the maximum at $x_0 \in K$ is realized at some $y_0=(y', e_n \cdot x_0)$ with $y' \in \p K'$, and assume for simplicity that $y'=0$. Then on the slice $(x-x_0) \cdot e_n=0$ the subsolution $u$ is touched by above at the origin (after a vertical translation) by the quadratic polynomial
$$ \frac {1}{2 \eps} |x'|^2 - \frac 1 \eps x' \cdot x_0'.$$
This inequality can be extended near $y_0$ to each side of the hyperplane $(x-x_0) \cdot e_n=0$ to an inequality of the type
\begin{equation}\label{ulef}
u \le \frac {1}{\eps} |x'|^2 - \frac 1 \eps x' \cdot x_0' + M|t|, \quad \quad t:=(x-x_0) \cdot e_n ,
\end{equation}
with $M\gg 1$ large. 
Indeed, for this we compare $u$ in a neighborhood of $y_0$, on each side of the hyperplane $t=0$, with the supersolution
$$\psi:= \frac {1}{\eps} |x'|^2 - \frac 1 \eps x' \cdot x_0' +  M (|t| - t^2).$$
Here $M$ is chosen large so that $\psi \ge u$ on $K \cap \p B_\rho(y_0)$, and notice that on the part of the boundary included in $\p K$, $\psi$ is an admissible test function for the Neumann boundary condition.  
 
Now we can apply Lemma \ref{L5.4}  successively and deduce that the coefficient of $|t|$ in \eqref{ulef} can be made arbitrarily negative as we focus near $y_0$. This implies in particular that $u$ is touched by above at $y_0$ by the piecewise $C^2$ polynomial
$$  \mu |x'|^2 - M t^2 - |t| - \frac 1 \eps x' \cdot x_0'.$$
This contradicts Definition \ref{def5.1} (4) if $y_0 \in \{x_n=0\}$ or Definition \ref{def5.1} (2) if $y_0 \notin \{x_n=0\}$, and the claim is proved.

\smallskip

It remains to show that $u^\eps$ satisfies the subsolution property  at a point $x_0 \in \p K$. By construction, at such a point $\nabla _{x'}u^\eps(x_0)$ points in the direction $y'-x_0'$ for some $y' \in \overline K'$. Then $u^\eps$ satisfies the conditions of Definition \ref{def5.1} at $x_0$ since it cannot be touched by above by a test function $\psi$ which is $C^2$ in the $x'$ variable for which $-\nabla _{x'} \psi(x_0)$ points to the interior of the tangent cone $K'_{x_0'}$.

\end{proof}

We introduce the corresponding $\mathcal S_\Lambda$ classes in the transmission setting which we denote by $\mathcal S_\Lambda^{tr}$.

\begin{defn}
We say that $u \in \underline {\mathcal S}_\Lambda^{tr} (K \cap B_1)$ if $u$ satisfies Definition \ref{def5.1} with operators $\mathcal F^+=\mathcal F^-=\mathcal M^+_{\frac{\lambda}{n},\Lambda}$ and with \eqref{psi1} replaced by
$$\max \left\{\psi_n^+(x_0) + \delta \, \psi^-_n(x_0), \psi_n^+(x_0) + \delta^{-1} \, \psi^-_n(x_0)\right\} <0.$$
Similarly, we define $\overline {\mathcal S}_\Lambda^{tr} (K \cap B_1)$ and  ${\mathcal S}_\Lambda^{tr} (K \cap B_1)$.
\end{defn}

A consequence of Lemma \ref{L5.3} is that the difference between a subsolution and a supersolution of \eqref{tran} belongs to the class $\underline {\mathcal S}_\Lambda^{tr} $.

\begin{prop}
If $u$ is a subsolution and $v$ is a supersolution of \eqref{tran}, then $u-v \in \underline {\mathcal S}^{tr}_\Lambda(K \cap B_1)$. 
\end{prop}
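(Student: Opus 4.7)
The proposition is the transmission analog of Proposition \ref{comp}, and I would follow the same template: regularize, verify the class membership for the regularized difference, then pass to the limit. Define $u^\eps$ by the tangential sup-convolution \eqref{uep} and, analogously,
\[
v_\eps(x) := \min_{(y',x_n) \in \overline{K}\cap \overline{B_1}} \Bigl(v(y',x_n) + \tfrac{1}{2\eps}|x' - y'|^2\Bigr).
\]
By Lemma \ref{L5.3} and its inf-convolution analog, $u^\eps$ is a subsolution and $v_\eps$ is a supersolution of \eqref{tran} in $\overline K \cap B_{1-C\sqrt\eps}$. It therefore suffices to prove that $u^\eps - v_\eps \in \underline{\mathcal{S}}_\Lambda^{tr}(K \cap B_{1-C\sqrt\eps})$; the conclusion for $u-v$ then follows by sending $\eps \to 0$ and applying stability of the class (which extends to upper semicontinuous functions, as used around Lemma \ref{nabu}).

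The verification splits by the type of touching point. At interior points of $K\cap\{x_n \ne 0\}$, both $u$ and $v$ are $C^{1,\alpha}$ from the standard interior theory for $\mathcal F^\pm$, and the classical theorem that the difference of sub-- and supersolutions of uniformly elliptic fully nonlinear equations lies in $\underline{\mathcal{S}}_\Lambda$ applies directly. At $x_0 \in \p K \cap \{x_n\ne 0\}$, the Neumann subsolution property is verified exactly as in Proposition \ref{comp}: the tangent quadratic polynomials from below of $u^\eps$ and of $-v_\eps$ at $x_0$ sum, after rearrangement using the convexity of $\overline K$, to a single tangent polynomial from below of $u^\eps - v_\eps$ of the form $a - \frac{1}{\eps}|x - z_0|^2$ with $z_0 \in \overline K$ the midpoint of the two base points. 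If a test function $\psi$ touches from above at $x_0$ with $-\nabla\psi(x_0)$ interior to $K_{x_0}$ and the correct Pucci sign, then since $\nabla p(x_0) \in \overline{K_{x_0}}$, the vector $-\nabla(\psi-p)(x_0)$ lands in the interior of $K_{x_0}$, contradicting $\psi - p \ge 0$ with minimum at $x_0$.

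The main new step is the transmission subsolution property at $x_0 = (x_0', 0) \in K \cap \{x_n = 0\}$ (and, with the Neumann direction adjoined, at the corner $\p K' \times \{0\}$). Assume by contradiction that a piecewise $C^2$ function $\psi$ satisfying $\max\{\psi_n^+ + \delta \psi_n^-,\, \psi_n^+ + \delta^{-1}\psi_n^-\}(x_0) < 0$ and $\mathcal M^+_{\frac{\lambda}{n},\Lambda}(D^2\psi^\pm(x_0)) < 0$ strictly touches $u^\eps - v_\eps$ from above at $x_0$. I would invoke the Crandall--Ishii doubling-of-variables technique applied to
\[
\Phi(x,\xi) := u^\eps(x) - v_\eps(\xi) - \psi\!\left(\tfrac{x+\xi}{2}\right) - \tfrac{1}{\tau}|x-\xi|^2 - \beta\bigl(|x-x_0|^2+|\xi-x_0|^2\bigr)
\]
on $(\overline K\cap B_1)^2$, with $\tau,\beta \to 0$. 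The maximizers $(x_\tau,\xi_\tau) \to (x_0,x_0)$, and at these points $u^\eps$ is touched from above and $v_\eps$ from below by piecewise $C^2$ functions whose $x_n$-derivatives from each side are given by those of $\psi$ plus penalty contributions. Feeding these into the transmission subsolution property of $u^\eps$ and the supersolution property of $v_\eps$, and applying a Crandall--Ishii-type control on the Pucci operators acting on the second-order data, the two one-sided derivative inequalities sum to yield $\psi_n^+(x_0) + \gamma\psi_n^-(x_0) \ge 0$, contradicting the assumption on $\psi$ once the limit is taken.

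The principal obstacle is precisely this interface analysis: because the interior transmission problem is not invariant under shifts in the normal direction $e_n$, tangential sup-/inf-convolution does not make $u^\eps$ or $v_\eps$ smooth across $\{x_n = 0\}$, so the clean tangent-polynomial argument that worked on $\p K$ cannot be transplanted to the interface. The doubling-of-variables argument handles this, and the stronger $\max\{\psi_n^+ + \delta \psi_n^-,\, \psi_n^+ + \delta^{-1}\psi_n^-\} < 0$ envelope (rather than $\psi_n^+ + \gamma\psi_n^- < 0$ for a specific $\gamma$) in the definition of $\underline{\mathcal{S}}_\Lambda^{tr}$ is precisely what absorbs the directional ambiguity introduced by the penalty as $\tau,\beta \to 0$, allowing the argument to close uniformly in the unknown $\gamma \in [\delta,\delta^{-1}]$.
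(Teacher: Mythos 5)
Your reduction to showing $u^\eps - v_\eps \in \underline{\mathcal{S}}^{tr}_\Lambda$ via tangential sup/inf-convolution, and your midpoint argument for the Neumann condition along $\p K$, coincide with the paper's proof. The divergence is at the interior interface: the paper disposes of that case in one line by citing the corresponding comparison lemma from \cite{DFS1} (``the fact that $u^\eps - v_\eps$ belongs to $\mathcal S_\Lambda^{tr}$ in the interior of $K \cap B_{1-C\sqrt\eps}$ was proved in \cite{DFS1}''), whereas you attempt a self-contained doubling-of-variables argument at $\{x_n=0\}$. That attempt has a genuine gap.

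The doubling functional
\[
\Phi(x,\xi) = u^\eps(x) - v_\eps(\xi) - \psi\!\left(\tfrac{x+\xi}{2}\right) - \tfrac{1}{\tau}|x-\xi|^2 - \beta\bigl(|x-x_0|^2+|\xi-x_0|^2\bigr)
\]
does not force the maximizer pair $(x_\tau,\xi_\tau)$ onto the interface. The penalty $\tfrac{1}{\tau}|x-\xi|^2$ only gives $|x_\tau-\xi_\tau|=O(\sqrt\tau)$, and nothing rules out the straddling configuration $(x_\tau)_n>0>(\xi_\tau)_n$. In that configuration the available viscosity information is $\mathcal{F}^+(D^2 u^\eps(x_\tau))\ge 0$ and $\mathcal{F}^-(D^2 v_\eps(\xi_\tau))\le 0$, and since $\mathcal{F}^+\neq\mathcal{F}^-$ in general (the transmission problem \eqref{tran} arises as a linearization in which the two phases are rescaled by different factors, so the limiting operators differ even when the original $\mathcal F^\pm$ coincide), subtracting these does not produce a Pucci inequality for the second-order difference. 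This is precisely the obstruction that makes the transmission comparison nontrivial. The envelope condition $\max\{\psi_n^+ + \delta\psi_n^-,\ \psi_n^+ + \delta^{-1}\psi_n^-\}<0$ that you invoke does absorb the unknown $\gamma$, but it is a first-order condition and does nothing to reconcile the second-order data in the straddling case. Without either a mechanism confining $(x_\tau,\xi_\tau)$ to one closed half-space (or to the interface itself), or a separate argument handling the straddling configuration, the interface step does not close, and the claimed inequality $\psi_n^+(x_0) + \gamma\psi_n^-(x_0)\ge 0$ is not established.
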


\begin{proof}
Let $u^\eps$ be the sup-convolution of $u$ and and $v_\eps$ the inf-convolution of $v$. It suffices to show that 
$$u^\eps - v_\eps \in \underline {\mathcal S}^{tr}_\Lambda(K \cap B_{1-C \sqrt \eps}),$$ and then get the result follows by letting $\eps \to 0$. The fact that $u^\eps - v_\eps$ belongs to $\mathcal S_\Lambda^{tr}$ in the interior of $K \cap B_{1-C \sqrt \eps}$ was proved in \cite{DFS1}, since $u^\eps$ remains a subsolution and $v_\eps$ a supersolution of \eqref{tran} in the interior of $K$.

The boundary conditions for $u^\eps - v_\eps$ at some point $x_0 \in \p K$ are easily satisfied as in the last part of the proof of Lemma \ref{L5.3} above. Indeed, $\nabla_{x'}(u^\eps-v_\eps)$ still points in the direction $y'-x_0'$ for some $y' \in \overline K'$, hence $u^\eps - v_\eps$ cannot be touched by above by a test function $\psi$ for which $-\nabla _{x'} \psi(x_0)$ points to the interior of $K'_{x_0'}$.
\end{proof}

\begin{lem}The Harnack inequality of Proposition \ref{prop:Harnack} holds for the classes  $\mathcal S^{tr}_\Lambda$ in the transmission problem.
\end{lem}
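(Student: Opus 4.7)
The plan is to follow the same structure as Proposition \ref{prop:Harnack}: the implications $a) \Rightarrow b) \Rightarrow c)$ reduce to part a) by the standard arguments of \cite{CC} applied to the class $\mathcal{S}_\Lambda^{tr}$, so the main task is the weak $L^\eps$ Harnack inequality a) for a nonnegative $u \in \overline{\mathcal{S}}_\Lambda^{tr}(K \cap B_1)$. The two ingredients are the \emph{interior} weak Harnack for the transmission problem, already available from \cite{DFS1}, which handles $\{x_n=0\}$ as an interior point, together with a new \emph{boundary} barrier that carries the infimum estimate up to $\partial K$.

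For the interior part, a chain-of-balls argument in the connected Lipschitz domain $K\cap B_1$, with balls allowed to straddle $\{x_n=0\}$ but staying away from $\partial K$, combined with the transmission weak Harnack of \cite{DFS1}, gives
\[
\left(\int_{K\cap B_1} u^\eps\,dx\right)^{1/\eps} \le C \inf_{B_{\delta'}(y)}u
\]
for some $B_{\delta'}(y)$ with $B_{2\delta'}(y) \subset K\cap B_1$, which we choose centered on the interface, i.e.\ with $y_n=0$.

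For the boundary propagation I would compare $u$ in the annular region $(B_{3/4}(y) \cap K)\setminus B_{\delta'}(y)$ with the perturbed radial barrier
\[
\psi(x) := c'\bigl(|x-y|^{-M} - (3/4)^{-M}\bigr) + \eps_0 |x_n|,
\]
with $M$ large, $c'$ proportional to $\inf_{B_{\delta'}(y)}u$, and $\eps_0>0$ a small universal constant. A direct computation shows that $\psi$ is a strict classical subsolution in every relevant sense: (i) $\mathcal M^-_{\lambda/n,\Lambda}(D^2\psi) > 0$ away from $\{x_n=0\}$, since $\eps_0|x_n|$ has zero classical Hessian on each side; (ii) on $\partial K$, $\nabla\psi$ is dominated by the radial term pointing toward $y-x$ (interior to the tangent cone $K_x$), while the perturbation $\pm\eps_0 e_n$ is tangent to $\partial K$ because $K=K'\times\R$ is invariant in $e_n$; and (iii) on $\{x_n=0\}$, one computes $\psi_n^+ = \psi_n^- = \eps_0 > 0$, so
\[
\min\bigl\{\psi_n^+ + \delta\,\psi_n^-,\; \psi_n^+ + \delta^{-1}\,\psi_n^-\bigr\} = \eps_0(1+\delta) > 0,
\]
which is precisely the strict transmission condition that excludes $\psi$ (up to additive constant) from touching $u \in \overline{\mathcal S}_\Lambda^{tr}$ from below at any interface point. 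With these three properties in hand, the maximum-principle argument of Lemma \ref{lem:max} rules out any positive maximum of $\psi-u$: interior points by (i), points on $\partial K$ by (ii), and points on $\{x_n=0\}$ by (iii); choosing $c'$ and $\eps_0$ so that $\psi \le u$ on both annular spheres, we conclude $\psi \le u$ throughout the annulus. Evaluating on $\overline K \cap B_{1/2}$ yields $\inf_{B_{\delta'}(y)}u \le C \inf_{\overline K \cap B_{1/2}} u$, which combined with the interior step completes a).

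The main obstacle is the strict transmission condition (iii): the natural symmetric radial barrier (taking $y$ on $\{x_n=0\}$) works for (i) and (ii) but gives $\psi_n^\pm \equiv 0$, so the transmission inequality is only satisfied as equality. The Lipschitz correction $+\eps_0|x_n|$ is the least intrusive way to make (iii) strict while preserving (i) --- its classical Hessian vanishes away from the interface --- and preserving (ii) --- its gradient $\pm\eps_0 e_n$ is tangent to the $e_n$-invariant boundary $\partial K$. Once the barrier is in place, the rest is mechanical; a parallel argument with the sign of the perturbation reversed serves as the barrier needed to complete the max-principle step (b) for subsolutions in $\underline{\mathcal S}_\Lambda^{tr}$.
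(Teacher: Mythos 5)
Your approach is essentially identical to the paper's: reduce to the infimum-transfer estimate $\inf_{B_{\delta'}(y)} u \le C\inf_{\overline K\cap B_{1/2}}u$ for $y$ on the interface, and use the radial barrier $|x-y|^{-M}-(3/4)^{-M}$ perturbed by $\eps|x_n|$ so that the transmission condition at $\{x_n=0\}$ is satisfied strictly while the Neumann condition on $\partial K$ survives (since $K$ is $e_n$-invariant). One small slip: the perturbation must scale with the infimum. In the paper the barrier is $\psi = |x-y|^{-M}-(3/4)^{-M}+\eps|x_n|$ and one compares $u$ with $c'\psi$, so \emph{both} terms are multiplied by $c'\sim\inf_{B_{\delta'}(y)}u$; your version keeps $\eps_0|x_n|$ with $\eps_0$ ``a small universal constant'' while only the radial part carries $c'$, which would spoil $\psi\le u$ on $\partial B_{\delta'}(y)$ when $\inf u$ is small. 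Replacing $\eps_0$ by $c'\eps_0$ (or, as in the paper, multiplying the whole bracket by $c'$) fixes this, and the rest of your argument then goes through exactly as the paper's.
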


\begin{proof} As in the proof of Proposition \ref{prop:Harnack} it suffices to show that if $u \in \overline S_\Lambda^{tr}$ and $B_{2\delta'}(y) \subset K \cap B_1$, then 
$$ \inf_{B_{\delta'}(y)} u \le C \inf_{ \overline K \cap B_{1/2}} u.$$
Also, we may assume that $y \in \{x_n=0\}$, since the inequality can be extended to other points in the interior of $K$ by using barriers with compact support in $K \cap B_1$. 

We argue similarly as in Section 4 and consider the admissible test function 
$$ \psi(x):= |x-y|^{-M} - (3/4)^{-M} + \eps |x_n|, \quad \quad \mathcal M_{\frac \lambda n, \Lambda}^-(D^2 \psi) >0,$$
with $\eps$ small universal. We compare $u$ with a multiple of $\psi$ in $\{\psi\ge 0\} \cap K) \setminus B_{\delta'}(y)$ and obtain the desired inequality.
\end{proof}

In \cite{DFS1} it was shown that in the interior of $K$ the derivatives of $u$ in the $x'$ directions are H\"older continuous. Thus,
$$w:= |\nabla_{x'} u|^2$$
is well defined in $K \cap B_1$, and can be extended up to the boundary by upper semicontinuity.

 We have the analogous result to Lemma \ref{nabu} in the transmission setting of \eqref{tran}.
 
\begin{lem}\label{nabu2} Let $u$ be a solution to \eqref{tran}. Then
 $$w:=|\nabla_{x'} u|^2 \in \underline {\mathcal S}^{tr}_\Lambda(K \cap B_1),$$
 and
 $$\|w\|_{L^\infty(K \cap B_{1/2})} \le C \|u\|_{L^\infty(K \cap B_1)}^2.$$
\end{lem}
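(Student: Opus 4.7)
The plan is to mirror the argument of Lemma \ref{nabu}, now using the tangential sup-convolution $u^\eps$ from \eqref{uep} in place of the full sup-convolution used there. The driving observation is the pointwise convergence
\[
\frac{u^\eps(x)-u(x)}{\eps}\;\longrightarrow\;\tfrac{1}{2}|\nabla_{x'}u(x)|^2\;=\;\tfrac{1}{2}w(x),\qquad \eps\to 0,
\]
valid at every interior point where $u$ is differentiable in $x'$, which by the interior $C^{1,\alpha}$ regularity of \cite{DFS1} is all of the interior of $K\cap B_1$, with uniform convergence on compact subsets. Combining this with Lemma \ref{L5.3} (that $u^\eps$ remains a subsolution of \eqref{tran}) and the preceding proposition (that the difference of a sub and a supersolution lies in $\underline{\mathcal S}^{tr}_\Lambda$), we get $(u^\eps-u)/\eps\in\underline{\mathcal S}^{tr}_\Lambda(K\cap B_{1-C\sqrt\eps})$, from which membership of $w$ in $\underline{\mathcal S}^{tr}_\Lambda$ at interior points follows by passing to the limit.

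The technical backbone consists of the two one-sided equivalences analogous to (a), (b) of Lemma \ref{nabu}: (a) if $|\nabla_{x'}u|^2\le M$ on $B_{2r}(x)\cap K$ with $r\ge C\sqrt\eps$, then $u^\eps-u\le M\eps/2$ on $B_r(x)\cap K$, because $u$ is $M^{1/2}$-Lipschitz in $x'$ there; and (b) conversely, if this inequality holds for all small $\eps$, then $|\nabla_{x'}u|^2\le M$ at $x$, by the above pointwise convergence. To upgrade $w$ to a quantitative $L^\infty$ bound up to $\p K$, I would apply the interior $C^{1,\alpha}$ estimate of \cite{DFS1} on balls $B_{d(x)/2}(x)\subset K$, which yields
\[
|\nabla_{x'}u(x)|\le C\,\|u\|_{L^\infty(K\cap B_1)}\,d_{\p K}(x)^{-1}\qquad\text{in }K\cap B_{3/4},
\]
where the relevant distance is only to $\p K$ (not to $\{x_n=0\}$), precisely because $\nabla_{x'}u$ is H\"older continuous across the interface. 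Part (a) then gives
\[
\frac{u^\eps-u}{\eps}\le C\,\|u\|_{L^\infty(K\cap B_1)}^2\,(\min\{d_{\p K}(x),\sqrt\eps\})^{-2},
\]
whose right-hand side lies in $L^p(K\cap B_1)$ for some small $p>0$ (uniformly in $\eps$). Since the left-hand side belongs to $\underline{\mathcal S}^{tr}_\Lambda$, the transmission weak Harnack inequality established just above propagates the $L^p$ bound to a uniform $L^\infty$ bound on $K\cap B_{1/2}$. Part (b) then delivers $\|w\|_{L^\infty(K\cap B_{1/2})}\le C\|u\|_{L^\infty(K\cap B_1)}^2$.

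To finish, I would verify that $w$ belongs to $\underline{\mathcal S}^{tr}_\Lambda(K\cap B_{1/2})$ as a whole, including at points on $\p K$ and at interface points on $K\cap\{x_n=0\}$. The argument is the standard contradiction: if an admissible test function $\psi$ strictly touches $w$ from above at some such $x_0$, then for all sufficiently small $\eps$ the pair of estimates (a)--(b) combined with the uniform boundedness of $w$ and its upper semicontinuity force $\psi+c_\eps$ (with $c_\eps\to 0$) to touch $2(u^\eps-u)/\eps$ from above at some $x_\eps\to x_0$, contradicting that $(u^\eps-u)/\eps\in\underline{\mathcal S}^{tr}_\Lambda$. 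The main obstacle I anticipate is ensuring that the interior $C^{1,\alpha}$ estimate of \cite{DFS1} genuinely furnishes a quantitative tangential gradient bound up to the interface $\{x_n=0\}$ with constants depending only on $\lambda,\Lambda,n$ and on $\gamma\in[\delta,\delta^{-1}]$; everything else is a faithful transcription of the Neumann-case argument, with the observation that $u^\eps$ only translates in tangential directions so it respects the transmission interface.
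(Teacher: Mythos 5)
Your proposal is correct and is precisely the adaptation the paper intends: the paper's own proof consists of the single sentence ``The proof is identical to that of Lemma~\ref{nabu} and we omit the details,'' and you have faithfully transcribed that argument to the transmission setting, correctly using the tangential sup-convolution from~\eqref{uep}, Lemma~\ref{L5.3}, the transmission analogue of Proposition~\ref{comp}, and the transmission weak Harnack inequality, while also rightly flagging the one genuine technical input needed — that the interior $C^{1,\alpha}$ estimates from~\cite{DFS1} control $\nabla_{x'}u$ up to (and across) the interface $\{x_n=0\}$, so that the relevant distance in the gradient bound is to $\p K$ alone.
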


 The proof is identical to that of Lemma \ref{nabu} and we omit the details.
 
 Next we state the analogue of Lemma \ref{L07} in the setting of the transmission problem \eqref{tran}.
 
 \begin{lem}\label{L072}Assume that $K=K_{n-m}\times \R^m$ as in \eqref{ck2}. Then 
$$\|\nabla_{x'} u - \tau_m\|_{L^\infty(K \cap B_{1/2})} \le (1-c(\delta)) \|\nabla_{x'} u\|_{L^\infty(K \cap B_{1})}.$$
with $\tau_m$ a vector in ${\color{blue}\{0\}\times} \R^{m-1} \times \{0\}$.
\end{lem}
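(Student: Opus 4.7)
The plan is to follow the strategy of Lemma \ref{L07}, replacing the full gradient $\nabla u$ by the tangential gradient $\nabla_{x'}u$ and the Neumann classes $\mathcal S_\Lambda$ by the transmission classes $\mathcal S^{tr}_\Lambda$. After normalizing $\|\nabla_{x'}u\|_{L^\infty(K\cap B_1)}=1$ and fixing an interior ball $B_\delta(z)\subset K\cap B_1$, I run the following dichotomy. If $|\nabla_{x'}u(z)|^2\le 1-\mu$ for a small universal $\mu$, then the interior $C^{1,\alpha}$ regularity in tangential directions from \cite{DFS1} extends this inequality to a small ball $B_{c(\mu)}(z)$; by Lemma \ref{nabu2}, $1-|\nabla_{x'}u|^2$ is a nonnegative function in $\overline{\mathcal S}^{tr}_\Lambda$, and the weak Harnack inequality (preceding Lemma \ref{nabu2}) pushes the lower bound to $1-|\nabla_{x'}u|^2\ge c'>0$ on $K\cap B_{1/2}$. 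I then take $\tau_m=0$.

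Otherwise $|\nabla_{x'}u(z)|^2>1-\mu$; let $e$ be the unit direction of $\nabla_{x'}u(z)$, so $e\in\R^{n-1}\times\{0\}$. Since $e$ is tangent to $\{x_n=0\}$, the directional derivative $u_e$ is continuous across the transmission hyperplane in the interior of $K$ and belongs to $\mathcal S_\Lambda$ there, so the interior Harnack inequality applied to the nonnegative function $1-u_e$ in $K\cap B_{7/8}\cap\{d_{\p K}>\sigma\}$ gives $u_e\ge 1-C(\sigma)\mu$, hence $|\nabla_{x'}u-e|^2\le 2C(\sigma)\mu$ on this interior region. I then invoke the strict convexity argument of Lemma \ref{L06} to force $e$ to lie in $\{0\}\times\R^{m-1}\times\{0\}$. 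Decomposing $e=\tau_m+e^{(1)}$ with $\tau_m\in\{0\}\times\R^{m-1}\times\{0\}$ and $e^{(1)}\in\R^{n-m}\times\{0\}^m$, if $|e^{(1)}|>0$ I pick $x_1'\in\p K_{n-m}\cap B_{3/4}$ with $\nu_{K_{n-m}}(x_1')\cdot e^{(1)}\le -c(\delta)|e^{(1)}|$ by strict convexity of $K_{n-m}$, and at the boundary point $x_1^*=(x_1',0',\tfrac{1}{2})\in\p K\cap\{x_n>0\}$ I build the Lemma \ref{L06} quadratic barrier using the interior $C^{1,\alpha}$ linear approximation of $u$ at the nearby interior point $x_1^*+\sigma'\nu_1$ (as substitute for the global linear approximation of Lemma \ref{L06}, which is unavailable here). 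The resulting contact occurs on $\p K\cap B_\rho(x_1^*)$ once $\rho$ is chosen less than $\tfrac{1}{2}$, and $\nabla P\cdot\nu(x_1^*)=e^{(1)}\cdot\nu_{K_{n-m}}(x_1')+O(\eps+\sigma'+\sqrt{\mu})<0$ contradicts the Neumann condition. Here it is crucial that $\nu_1\in\R^{n-m}\times\{0\}^m$ annihilates the $e_n$-component of $\nabla P$, so the uncontrolled $u_n$-data drops out. Hence $\tau_m=e$.

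Finally, $\tilde u:=u-\tau_m\cdot x$ solves the same transmission problem since $\tau_m$ is tangent to both $\p K$ and $\{x_n=0\}$. On the interior region $|\nabla_{x'}\tilde u|^2\le 2C(\sigma)\mu$, and globally $|\nabla_{x'}\tilde u|^2\le 4$. Applying the $L^p$-form of the weak Harnack inequality (part (b) of the preceding lemma) to the subsolution $|\nabla_{x'}\tilde u|^2\in\underline{\mathcal S}^{tr}_\Lambda$ bounds $\sup_{K\cap B_{1/2}}|\nabla_{x'}\tilde u|^2$ by $C_1$ times the integral over $K\cap B_1$, which splits into an interior contribution of order $C(\sigma)\mu$ and a boundary-strip contribution of order $\sigma$; balancing $\sigma$ and $\mu$ small this yields $\|\nabla_{x'}u-\tau_m\|_{L^\infty(K\cap B_{1/2})}\le\tfrac{1}{2}$, the required decay. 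The principal obstacle is that, unlike in the Neumann setting of Lemma \ref{L07}, we have no control on $u_n$, so the barrier argument must be localized near $\p K\cap\{x_n\ne 0\}$ using interior $C^{1,\alpha}$ regularity; the fact that $\nu\perp e_n$ on $\p K$ is what makes this localized approach succeed.
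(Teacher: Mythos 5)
Your proposal has the right skeleton — normalization, the dichotomy between gradient decay and approximate linearity, the strict-convexity barrier to force the slope to be tangential, and a final Harnack push to the boundary — and your first alternative (the $|\nabla_{x'}u(z)|^2\le 1-\mu$ case, via weak Harnack on $1-|\nabla_{x'}u|^2\in\overline{\mathcal S}^{tr}_\Lambda$) and your last step (applying part (b) of the $\mathcal S^{tr}_\Lambda$ Harnack to $|\nabla_{x'}\tilde u|^2$ and splitting the integral into interior and strip contributions) are both correct and close to the paper's route, with the last step a minor variant of the paper's use of the $L^\infty$ gradient bound from Lemma \ref{nabu2}.

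The gap is in the strict-convexity barrier step. You correctly identify the obstruction — there is no control on $u_n$ — but the workaround you propose does not close it. You center the barrier at the boundary point $x_1^*\in\p K\cap\{x_n>0\}$ and try to replace the global linear approximation used in Lemma \ref{L06} by the \emph{interior} $C^{1,\alpha}$ linear approximation of $u$ at a nearby interior point $y=x_1^*+\sigma'\nu_1$. However, the interior $C^{1,\alpha}$ estimate only gives $|u(x)-u(y)-\nabla u(y)\cdot(x-y)|\lesssim d(y)^{-1-\alpha}|x-y|^{1+\alpha}$, which is of order one as soon as $|x-y|\sim d(y)=\sigma'$; it does not produce a linear approximation of $u$ across a fixed ball $B_\rho(x_1^*)$ touching $\p K$. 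Consequently you cannot guarantee that the barrier $P=L+C\eps(\cdots)$ dominates $u$ on $\p B_\rho(x_1^*)\cap\overline K$, so the first-contact argument cannot confine the touching point to the interior of $B_\rho$, and you cannot conclude it occurs on $\p K$. Writing the error as $O(\eps+\sigma'+\sqrt\mu)$ hides the fact that the $C^{1,\alpha}$ error does not stay small at scale $\rho$.

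The paper supplies exactly the missing ingredient through Lemma \ref{L510} (which your proof does not use): from $\|\nabla_{x'}u\|_{L^\infty}\le 1$ one gets, via a one-dimensional sliding argument in the interior, a \emph{global} piecewise-linear approximation $|u(x)-u(0)-(p\,x_n^++q\,x_n^-+e'\cdot x')|\le C\eps$ with $p+\gamma q=0$, valid up to $\p K$. With this in hand the Lemma \ref{L06} barrier argument goes through verbatim (the $x_n$-dependence of the approximation drops out of $\nu\cdot\nabla P$ since $\nu\perp e_n$ on $\p K$, exactly the observation you make), and the final step follows from the Lemma \ref{nabu2} gradient bound applied to $u$ minus its piecewise approximation — which still solves \eqref{tran} because $p+\gamma q=0$ and $\tau_m$ is tangential. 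To repair your proof, replace the localized interior-$C^{1,\alpha}$ construction by this Lemma \ref{L510} step; the rest of your argument can remain as is.
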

 
 Theorem \ref{T09} follows easily by applying Lemma \ref{nabu2}, then by iterating Lemma \ref{L072} and combining it with following result.
 
 \begin{lem} \label{L510} Assume that $u$ solves \eqref{tran} and $\|\nabla_{x'} u\|_{L^\infty(K\cap B_1)} \le 1$. Then, there exist constants $p$, $q$ such that
 $$|u- (u(0) + px_n^+ + q x_n^-)| \le C \quad \mbox{in} \quad \overline K \cap B_{1/2},$$
 and $p + \gamma q =0$.
  \end{lem}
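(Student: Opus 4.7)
The plan is to argue by compactness and contradiction. Suppose the conclusion fails for every universal constant: then there exist sequences $(\mathcal F_k^\pm, \gamma_k, K_k, u_k)$ satisfying the hypotheses with
$$M_k := \inf_{p + \gamma_k q = 0} \|u_k - u_k(0) - p x_n^+ - q x_n^-\|_{L^\infty(K_k \cap B_{1/2})} \longrightarrow \infty.$$
Because $K_k \cap B_{1/2}$ contains interior balls of definite size on both sides of $\{x_n = 0\}$ (using \eqref{KK} and the $e_n$-invariance of $K_k$), this infimum is coercive on the constraint line and hence attained at some $(p_k, q_k)$. Setting $v_k := M_k^{-1}(u_k - u_k(0) - p_k x_n^+ - q_k x_n^-)$, one checks that $v_k$ solves the transmission problem \eqref{tran} for the rescaled operators $\tilde{\mathcal F}_k^\pm(M) := M_k^{-1} \mathcal F_k^\pm(M_k M) \in \mathcal E(\lambda, \Lambda)$ with the same $\gamma_k$ and $K_k$. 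Moreover $v_k(0) = 0$, $\|v_k\|_{L^\infty(K_k \cap B_{1/2})} = 1$, $\|\nabla_{x'} v_k\|_{L^\infty(K_k \cap B_1)} \le M_k^{-1} \to 0$, and $\inf_{p + \gamma_k q = 0} \|v_k - p x_n^+ - q x_n^-\|_{L^\infty(K_k \cap B_{1/2})} = 1$.

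Next I would establish compactness. Combining the Harnack inequality for the class $\mathcal S^{tr}_\Lambda$ proved in this section (giving H\"older estimates up to $\p K_k$) with the interior transmission H\"older regularity from \cite{DFS1} (valid up to $\{x_n = 0\}$), the $v_k$ admit a uniform modulus of continuity on $K_k \cap \overline{B_{1/2}}$. Supplying these estimates requires an $L^\infty$ bound on a set slightly larger than $B_{1/2}$, which is obtained by comparison against radial barriers as in Section \ref{sec:Neumann}, leveraging the tangential Lipschitz bound $\|\nabla_{x'} v_k\| \to 0$ to propagate the $L^\infty$ bound outside $B_{1/2}$. After extracting a subsequence, $K_k \to K_\infty$, $\tilde{\mathcal F}_k^\pm \to \tilde{\mathcal F}_\infty^\pm \in \mathcal E(\lambda, \Lambda)$, $\gamma_k \to \gamma_\infty \in [\delta, \delta^{-1}]$, and $v_k \to v_\infty$ uniformly on $K_\infty \cap \overline{B_{1/2}}$.

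The limit $v_\infty$ is a viscosity solution of the limiting transmission problem by the stability argument analogous to Proposition \ref{stb}. Since $\nabla_{x'} v_k \to 0$ uniformly, $v_\infty$ depends only on $x_n$, so write $v_\infty = f(x_n)$. The interior viscosity equations force $f''(x_n) = 0$ on each side of $\{x_n = 0\}$ by uniform ellipticity (recall $\tilde{\mathcal F}_\infty^\pm(0) = 0$), hence $f(x_n) = B x_n^+ + C x_n^-$ after using $f(0) = v_\infty(0) = 0$, and the transmission condition at $0$ yields $B + \gamma_\infty C = 0$. Choosing $(p_k', q_k') \to (B, C)$ with $p_k' + \gamma_k q_k' = 0$ (possible since $\gamma_k \to \gamma_\infty$) and using uniform convergence gives
$$\|v_k - p_k' x_n^+ - q_k' x_n^-\|_{L^\infty(K_k \cap B_{1/2})} \longrightarrow 0,$$
contradicting that the normalized infimum equals $1$.

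The main obstacle is the equicontinuity step, specifically establishing a uniform modulus of continuity up to the corner set $\p K_k \cap \{x_n = 0\}$ where the Neumann and transmission singularities meet, using only an $L^\infty$ bound on $B_{1/2}$ rather than a larger ball. The interplay between the boundary H\"older estimate for $\mathcal S^{tr}_\Lambda$ and the barrier-based propagation of $L^\infty$ control, exploiting $\nabla_{x'} v_k \to 0$, is the technical heart of the argument.
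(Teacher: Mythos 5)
Your compactness argument is a genuinely different route from the paper's proof, which is a short, direct argument: set $h(x_n) := u(y',x_n)$ for a fixed interior $y' \in K'$, observe $|u - h(x_n)| \le 1$ from the tangential Lipschitz bound, and then touch $u$ at interior points of the cylinder $B'_\delta(y')\times[-1/2,1/2]$ with piecewise test functions $h(0)+px_n^+ + qx_n^- \pm C|x'-y'|^2$ to show $h$ is $O(1)$-close to a two-plane with $p+\gamma q = 0$. No limit is taken, no a priori $L^\infty$ bound on $u$ is needed, and the argument is purely local.

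Your proposal has a genuine gap precisely at the step you flag as ``the technical heart,'' and I do not think the fix you sketch works. To extract a uniform limit of $v_k$ on $\overline K_k \cap \overline{B_{1/2}}$ you need an $L^\infty$ bound on a strictly larger set, say $K_k\cap B_{5/8}$. But your normalization only gives $\|v_k\|_{L^\infty(K_k\cap B_{1/2})}=1$, and the tangential gradient bound $\|\nabla_{x'}v_k\|\to 0$ controls the slice-to-slice variation but gives no a priori control on the growth of the one-dimensional profile $g_k(x_n):=v_k(y',x_n)$ for $|x_n|$ slightly past $1/2$. The barriers of Section \ref{sec:Neumann} require $L^\infty$ data on the boundary of the comparison region, which is exactly what is missing here, and the Harnack/H\"older machinery of the $\mathcal S^{tr}_\Lambda$ class only transfers oscillation from larger to smaller balls, not the other way. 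In the model case of the Laplacian one can rescue the propagation by differentiating the equation twice (so that $\partial^2_{x_n}v = -\Delta_{x'}v$ is small), but for a general fully nonlinear $\mathcal F$ this second-order control is not available, and the only way I see to establish that $g_k$ is near-affine outside $B_{1/2}$ is to run essentially the same viscosity touching argument in the annulus that the paper runs inside $B_{1/2}$. In other words, the compactness detour defers the core difficulty rather than resolving it; if you carry it through honestly you end up re-proving the paper's direct lemma as a sub-step. I would recommend either (a) proving the profile estimate $|h(x_n)-h(0)-px_n^+-qx_n^-|\le C$ directly by the touching argument the paper uses (which makes the compactness unnecessary), or (b) at minimum, spelling out precisely how the barrier propagates the $L^\infty$ bound in the fully nonlinear case, since the current one-sentence sketch does not go through.

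Everything after the compactness step --- passing to the limit, stability of the viscosity formulation, the limiting profile being piecewise linear with $B+\gamma_\infty C = 0$, and the contradiction --- is fine, assuming compactness is granted.
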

 
 \begin{proof}[Proof of Lemma \ref{L510}] The hypothesis implies that $|u(x) - h(x_n)| \le 1$ for some one-dimensional function $h(x_n)$. Then, the viscosity definition in the interior of $K$ implies that
 $$h(x_n)= h(0) + px_n^+ + q x_n^- + O(1),$$
 for some constants $p$, $q$ with $p + \gamma q =0$. Indeed, otherwise we can touch $u$ by below/above at some interior point in $B'_\delta(y') \times [- \frac 12, \frac 12]$ with appropriate test functions of the form
 $$ h(0) + px_n^+ + q x_n^- \pm C |x'-y'|^2,$$
 where $y'\in K'$ with $B'_\delta(y') \subset K' \cap B_1'$.
 \end{proof}
 We are left to prove Lemma \ref{L072}.
 
 \begin{proof}[Proof of Lemma \ref{L072}]
The proof is similar to  that of Lemma \ref{L07}, and we only sketch some of the details. Notice that \eqref{tran} remains invariant under the addition of piecewise linear functions of the form 
$$ p x_n^+ + q x_n^- + \tau_m \cdot x, \quad \quad  \quad p+\gamma q=0, \quad \tau_m \in \{0\} \times \R^{m-1} \times \{0\}.$$
Assume $\|\nabla_{x'} u\|_{L^\infty(K \cap B_1)} =1$. Then, as in Lemma \ref{L06}, either $$ \|\nabla_{x'} u\|_{L^\infty(K \cap B_{1/2})}  \le 1 - c'(\mu),$$ 
in which case we are done by taking $\tau_m=0$, or by the interior Harnack inequality applied to a directional derivative $e' \cdot \nabla u \in S_\Lambda^{tr}$, we have
$$ |u(x) - u(0,x_n) - e' \cdot x'| \le \eps, $$
for some unit direction $e' \in \R^{n-1} \times \{0\}$ and $\eps$ sufficiently small. As in Lemma \ref{L510}, this implies that in $K \cap B_{7/8}$
$$  |u(x) - u(0) - (p x_n^+ + q  x_n^- + e' \cdot x')| \le C \eps, \quad p + \gamma q =0.$$
Furthermore, the uniform strict convexity of $K_{n-m}$ and the argument in the proof of Lemma \ref{L06} shows that we may take $ e' = \tau_m \in \{0\} \times \R^{m-1}\times \{0\}$ to be tangential to $\p K_{n-m}$ (after relabeling $\eps$ if necessary).
Now we apply the gradient bound of Lemma \ref{nabu2} for the difference between $u$ and its piecewise approximation and obtain
$$\|\nabla u - \tau_m\|_{L^\infty(K \cap B_{1/2})} \le C \eps \le 1/2,$$
which gives the conclusion.
\end{proof}

\section{The two-phase Neumann problem in convex cones}

In this section, $u$ solves the two-phase Neumann problem in $K \cap B_1$ given by
 \begin{equation}\label{eqn:solution}
\left\{
\begin{array}{ll}
\mathcal{F}^+(D^{2}u)=0, & \hbox{in} \quad B_1^+(u):=\{ u>0 \} \cap B_1 \cap K,\\
\  &  \\
\mathcal{F}^-(D^{2}u)=0, & \hbox{in} \quad  B_1^-(u):= \{ u \le 0 \}^\circ \cap B_1 \cap K, \\
\  &  \\
|\nabla u^+| = G(|\nabla u^-|), & \hbox{on} \quad  F(u):= \partial_{\overline K} \{u>0\}  \cap B_1,\\
\  &  \\
u_\nu=0, & \hbox{on} \quad \p K  \cap B_1 \setminus F(u),\\
\end{array}
\right. 
\end{equation}
understood in the viscosity sense of Subsection \ref{bo}.
In this section, in addition to \eqref{KK}, we assume that $K$ is a convex cone of the form $K = K_{n-m}\times \R^m$, for a cone $K_{n-m}$ included in a half-space,
 \begin{equation}\label{conco}
 K_{n-m} \subset \{x_1 \ge \delta |(x_1,...,x_{n-m})| \} \subset \R^{n-m}, \quad \mbox{for some $\delta>0$.}
 \end{equation}

Our main purpose here is to prove the Lipschitz regularity of solutions to \eqref{eqn:solution}, which in particular establishes Theorem \ref{main2}. 

\begin{thm}[Lipschitz regularity] \label{thm:Lipschitz}
Let $u$ be a solution to \eqref{eqn:solution} with $K$ a cone as in \eqref{conco}. Assume that $\mathcal F ^+=\mathcal F^-$, and
\begin{equation}\label{eps0}
G'(t)\in [1- \eps_0, 1+\eps_0], \quad\mbox{ for all large} \quad t \ge t_0,
\end{equation} 
with $\eps_0$ small depending on $n$, $\lambda,\Lambda$ and $\delta$. If $0 \in F(u)$, then $$|u(x)| \le C (\|u\|_{L^\infty(K\cap B_1)} +1) |x|,$$ for  all $x\in \overline K\cap B_1$,
with $C$ a constant that depends only on $n$, $\lambda,\Lambda$, $\delta$ and $t_0$.
\end{thm}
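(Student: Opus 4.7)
The plan is to run a linearization-compactness scheme à la \cite{DS}, using the pointwise $C^{1,\alpha}$ estimate for the transmission problem (Theorem \ref{T09}) as the linearized regularity input. The condition $G'(t)\to 1$ forces the free boundary relation, in the large-gradient regime, to degenerate into the $\gamma=1$ transmission (no-jump) condition, which by Lemma \ref{identity} reduces to a purely Neumann problem of the kind handled in Section 4. First I would divide $u$ by $A+1$, where $A:=\|u\|_{L^\infty(K\cap B_1)}$: the normalized $u$ has $L^\infty$ norm $\le 1$, and the rescaled transmission function $\tilde G(s)=G((A+1)s)/(A+1)$ still satisfies $\tilde G'(s)\in[1-\eps_0,1+\eps_0]$ for $s\ge t_0/(A+1)\le t_0$. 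It then suffices to show $|u(x)|\le C|x|$ with $C$ depending only on $n,\lambda,\Lambda,\delta,t_0$.

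\textbf{Improvement of flatness.} The core statement to prove is: there exist $\theta,\alpha\in(0,1)$, $C_\ast>0$, and $\eps_0>0$ small (depending only on $n,\lambda,\Lambda,\delta$) such that, whenever $u$ solves \eqref{eqn:solution} in $K\cap B_1$ with $0\in F(u)$ and a tangential $\tau\in\{0\}\times\R^m$ satisfies $\|u-\tau\cdot x\|_{L^\infty(K\cap B_1)}\le M$, there exists $\tau'\in\{0\}\times\R^m$ with $|\tau'-\tau|\le C_\ast M$ and $\|u-\tau'\cdot x\|_{L^\infty(K\cap B_\theta)}\le\theta^{1+\alpha}M$. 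Tangential vectors $\tau$ are exactly the right class to subtract: for $\tau\in\{0\}\times\R^m$, $\tau\cdot x$ is itself a Neumann solution ($D^2(\tau\cdot x)=0$ and $\tau\cdot\nu=0$ on $\p K$), so subtracting one preserves the structure of \eqref{eqn:solution} along $\p K$.

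\textbf{Proof of the lemma by compactness.} Argue by contradiction, producing sequences $u_k,K_k,\mathcal F_k,G_k,\tau^{(k)},M_k$ for which no admissible $\tau'$ realizes the decay at scale $\theta$. Set $w_k:=(u_k-\tau^{(k)}\cdot x)/M_k$. By the Hölder estimates up to $\p K$ from Section 4 and compactness of cones satisfying \eqref{conco}, one extracts a uniform limit $w_\infty$ on a limit cone $K$. If $|\tau^{(k)}|/M_k$ stays bounded, the $u_k$ are uniformly bounded and Hölder continuous, $u_k(0)=0$ closes the argument by standard interior estimates; so assume $|\tau^{(k)}|/M_k\to\infty$. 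Expanding $|\tau^{(k)}+M_k\nabla w_k^{\pm}|$ in the free boundary relation and using $G_k'\to 1$, dividing by $M_k$ yields the perturbed transmission $(\nabla w_k^{+}-\nabla w_k^{-})\cdot(\tau^{(k)}/|\tau^{(k)}|)=o_k(1)+O(\eps_0)$ along $F(u_k)$. Rotating so that $\tau^{(k)}/|\tau^{(k)}|\to e_n$ and combining Lemma \ref{SS} and Lemma \ref{identity} with the Harnack-type nondegeneracy of Proposition \ref{prop:Harnack} and its transmission analogue in Section 5, we identify $w_\infty$ as a solution of \eqref{tran} with $\gamma=1$ on the limit cone $K_{n-m}\times\R^{m-1}\times\R$, satisfying the Neumann condition on $\p K$. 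Theorem \ref{T09} applied with $\gamma=1$ collapses $px_n^{+}+qx_n^{-}$ to a single linear function and produces $\nu\in\{0\}\times\R^m$ with $|w_\infty(x)-\nu\cdot x|\le C|x|^{1+\alpha}$ in $\overline K\cap B_1$. Choosing $\theta$ so that $C\theta^\alpha\le 1/2$ and using uniform convergence, the vector $\tau'=\tau^{(k)}+M_k\nu$ contradicts the assumed failure of the lemma.

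\textbf{Iteration and main obstacle.} Starting from $\tau_0=0$, $M_0=\|u\|_{L^\infty}\le 1$, and iterating the lemma at dyadic scales $\theta^j$ (using scale invariance of the cone and the fact that $\tilde G$ sits closer to the identity at smaller scales), one produces a Cauchy sequence $\tau_j\in\{0\}\times\R^m$ with $|\tau_j-\tau_{j-1}|\le C_\ast\theta^{\alpha(j-1)}M_0$ and $\|u-\tau_j\cdot x\|_{L^\infty(K\cap B_{\theta^j})}\le\theta^{(1+\alpha)j}M_0$; summing the telescoping gives $|\tau_j|\le CM_0$, and for $|x|\le\theta^j$
$$|u(x)|\le|\tau_j\cdot x|+\theta^{(1+\alpha)j}M_0\le CM_0|x|.$$
Monotonicity handles intermediate scales and un-normalizing Step 1 yields $|u(x)|\le C(A+1)|x|$. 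The main obstacle is identifying $w_\infty$ as an exact $\gamma=1$ transmission solution: this requires the quantitative smallness of $\eps_0$ to absorb the nonlinear error in the free boundary expansion, a nondegeneracy argument in the $\tau^{(k)}$-direction ensuring the $F(u_k)$ do not flatten away and pass to a genuine limit hyperplane, and careful handling of the set $Z=\p K\cap\p\{u_k>0\}$ via Lemma \ref{SS} together with Lemma \ref{identity}.
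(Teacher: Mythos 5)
The central step of your scheme does not work, and the obstacle is structural rather than technical. You propose to iterate an improvement-of-flatness in which the model object subtracted from $u$ is a single tangential linear function $\tau\cdot x$, producing at the $j$-th step $\|u-\tau_j\cdot x\|_{L^\infty(K\cap B_{\theta^j})}\le\theta^{(1+\alpha)j}M_0$. If this iteration ran forever it would prove that $u$ is pointwise $C^{1,\alpha}$ at the origin with a linear tangent $\tau_\infty\cdot x$. But $0\in F(u)$, and the free boundary condition forces a gradient jump: on the two sides of $F(u)$ the slopes are $a=|\nabla u^+|$ and $b=|\nabla u^-|$ with $a=G(b)$, and the hypothesis $G'\approx 1$ controls the derivative of $G$, not its value — it is perfectly consistent with $G(t)-t$ staying bounded away from $0$ (e.g.\ $G(t)=t+\kappa$ with $\kappa$ not small). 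So a generic solution behaves near the origin like the two-plane profile $U_b(x)=a(x\cdot\omega)^+-b(x\cdot\omega)^-$ rather than a linear function, and $\|u-\tau\cdot x\|_{L^\infty(B_\rho)}$ cannot be driven to $o(\rho)$. Your improvement-of-flatness lemma is therefore false, and the blow-up argument used to prove it quietly reveals the reason. When you expand $|\tau^{(k)}+M_k\nabla w_k^\pm|$ in the free boundary relation and divide by $M_k$, the constant term $\bigl(G_k(|\tau^{(k)}|)-|\tau^{(k)}|\bigr)/M_k$ does not vanish: the consistency at $0\in F(u_k)$ only makes it $O(1)$, not $o(1)$. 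The limit $w_\infty$ therefore solves an \emph{inhomogeneous} transmission condition $w^+_n+w^-_n=c$ with $c\ne 0$ in general, not the $\gamma=1$ condition you claim, so Theorem~\ref{T09} does not apply and Lemma~\ref{identity} is irrelevant.

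The paper sidesteps all of this by making two choices you did not. First, the model objects are the two-plane solutions $U_b$ with $a=G(b)$, which satisfy the free boundary condition exactly; the comparison is $\|u-U_b\|_{L^\infty}\le\eps$ rather than $\|u-\tau\cdot x\|\le M$. Second, the linearization in Lemma~\ref{impf} uses the phase-dependent rescaling \eqref{tilde}, namely $\tilde u_k=(u_k-a_kx_n)/(a_k\eps_k)$ in the positive phase and $(u_k-b_kx_n)/(b_k\eps_k)$ in the negative phase. These two choices kill the constant term at the free boundary and produce, in the limit, the genuinely homogeneous transmission problem $a\,\tilde u_n^++bG'(b)\,\tilde u_n^-=0$ to which Theorem~\ref{T09} (with $\gamma=bG'(b)/a$, not $\gamma=1$) applies. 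Finally, the paper does not iterate an improvement-of-flatness from scale~$1$; it needs the dichotomy of Lemma~\ref{lem:dichotomy} — either $r^{-1}\|u\|_{L^\infty(B_r)}\le\tfrac12\|u\|_{L^\infty(B_1)}$ or $u$ is $\eps$-close to a $U_b$ — because the improvement of flatness is only available once the two-plane proximity has been established, and the degenerate alternative (decay of the $L^\infty$ norm) has to be followed until that happens. Your outline has no analogue of the first branch. In short, the route through tangential linear functions cannot close; you need the two-plane normalization, the phase-dependent rescaling, and the dichotomy.
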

As mentioned in the Introduction, to prove this theorem, we follow closely the proof of Theorem 1.1 in \cite{DS}, adapting where necessary to obtain estimates up to the boundary of the cone $K$. The proofs here only sketch the arguments, and we refer the reader to \cite{DS} for the precise details.

 The first step is to establish the H\"older regularity of solutions.

\begin{lem} \label{lem:Holder} Assume that $u$ satisfies  \eqref{eqn:solution}. Then, $u\in C^{0,\alpha}(B_{1/2}\cap K)$ for some $\alpha>0$ that depends on $n$, $\lambda,\Lambda$, $\delta$, and
$$ \|u\|_{C^{0,\alpha}(B_{1/2}\cap K)} \le M,$$
with $M$ depending on $\|u\|_{L^\infty(B_1\cap K}$ and $G(1)$. Moreover, if $G(t)/t \in [\delta,\delta^{-1}]$ for $t \ge 1$, then the dependence of $M$ on $\|u\|_{L^\infty(B_1\cap K)}$ is linear and we have the estimate
$$ \|u\|_{C^{0,\alpha}(B_{1/2}\cap K)} \le C (1+ \|u \|_{L^\infty(B_1\cap K)}),$$
with $C$ depending only on $n$, $\lambda,\Lambda$, $\delta$.
\end{lem}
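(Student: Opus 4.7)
The plan is to adapt Steps 2 and 3 of the proof of Theorem \ref{main} to the present boundary setting. The key geometric observation, already exploited in Lemma \ref{cal} and Proposition \ref{prop:Harnack}, is that radial barriers $\phi$ of the form \eqref{Fsub} centered at an interior point $z \in K$ have $-\nabla \phi$ pointing toward $z$, hence into the tangent cone $K_x$ at every $x \in \p K$. Consequently such barriers are admissible both as Neumann test functions on $\p K \setminus F(u)$ and as free boundary test functions at points of $Z = \p K \cap F(u)$ in the formulation of Subsection \ref{bo}.

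First I would reduce the H\"older estimate to the following pointwise decay statement: for any $x_0 \in \overline K \cap B_{1/2}$ with $u(x_0) = \sigma > 0$ and $d := \mathrm{dist}_{\overline K}(x_0, F(u))$, one has $\sigma \le C d^\alpha$ for a universal $\alpha \in (0,1)$, with the symmetric statement when $\sigma < 0$. Combining this bound with the boundary H\"older estimate for $\mathcal{F}^\pm$-harmonic functions (Lemma \ref{cal}) yields the claimed Hölder estimate on $B_{1/2} \cap K$. To prove the pointwise bound I would argue as in Step 2 of Theorem \ref{main}: the functions $(u - \sigma/2)^+$ and $u^-$ both lie in $\underline{\mathcal{S}}_\Lambda(K \cap B_1)$ and have disjoint supports, so the weak Harnack inequality of Proposition \ref{prop:Harnack}, which holds up to $\p K$, applied in dyadic balls $B_{2^{-k}}(x_0) \cap K$ yields at each scale the alternative that either $\|u^-\|_{L^\infty}$ or $\|(u-\sigma/2)^+\|_{L^\infty}$ decreases by a universal factor. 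Iterating over the scales $2^{-k} \ge d$, either the alternative for $(u-\sigma/2)^+$ triggers often enough to give $\sigma \le C d^\alpha$ directly, or we obtain the estimate $\|u^-\|_{L^\infty(B_{2d}(x_0) \cap K)} \le C d^{3/2}$.

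In the latter alternative I would close the argument by contradicting the free boundary condition via a radial barrier comparison. Interior Harnack gives $u^+ \ge c \sigma$ in $B_{d/2}(x_0)$, so a multiple $a\phi$ with $a \approx \sigma/d$ lies below $u^+$ on the annulus $B_d(x_0) \setminus B_{d/2}(x_0)$; the small upper bound on $u^-$ together with the maximum principle then forces $u \ge -b\phi$ with $b \lesssim d^{1/2}$ in $B_{2d}(x_0) \setminus B_d(x_0)$. This produces a touching test function $a\phi^+ - b\phi^-$ at some point of $F(u) \cap \p B_d(x_0)$, and since $a \to \infty$ and $b \to 0$ as $d \to 0$, one has $a > G(b)$ for $d$ small, contradicting the subsolution property of $u$ on $F(u)$. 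The main technical obstacle is that this touching point may lie on $\p K$, but this is precisely the scenario handled by the admissibility of the barriers on $Z$ noted above, so the contradiction is reached uniformly in the position of the touching point.

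Finally, the linear dependence of the constant $M$ on $\|u\|_{L^\infty}$ in the case $G(t)/t \in [\delta, \delta^{-1}]$ for $t \ge 1$ follows by rescaling. Applying the first part of the lemma to $v := u / (1 + \|u\|_{L^\infty(B_1 \cap K)})$, which solves the same type of problem with $\mathcal{F}^\pm$ replaced by operators in the same class and the transmission function replaced by $\tilde{G}(s) = G(Ms)/M$ (with $M := 1 + \|u\|_{L^\infty}$), one checks that $\tilde{G}(s)/s \in [\delta, \delta^{-1}]$ for $s \ge 1$ and that $\tilde{G}(1)$ is universally bounded. The resulting $C^{0,\alpha}$ bound for $v$ is universal, and multiplying back by $M$ yields the linear dependence.
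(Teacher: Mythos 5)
Your proof follows the paper's argument for Lemma \ref{lem:Holder} essentially verbatim: pointwise decay $|u(x_0)| \lesssim \operatorname{dist}(x_0, F(u))^\alpha$ via the boundary weak Harnack inequality of Proposition \ref{prop:Harnack} applied alternately to the two phases in dyadic balls, a radial barrier contradiction at the free boundary (exploiting that $-\nabla\phi$ points toward the interior center, so the barrier is admissible on $\p K$ and at $Z$), and rescaling for the linear dependence. One small correction: touching $u$ by below at $F(u)$ with $a > G(b)$ contradicts the \emph{supersolution} property in Definition \ref{VSFB}, not the subsolution property; also note that since $u$ here is a full viscosity solution (unlike the Perron approximants in Step 2 of Theorem \ref{main}), the paper works directly with $u^+$ rather than $(u-\sigma/2)^+$, though your variant is equally valid.
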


\begin{proof} The proof follows from Theorem 2.2 in \cite{DS}, or the arguments in Section 3. We sketch the argument for completeness. 

We will use the following version of the weak Harnack inequality, which is a straightforward consequence of Proposition \ref{prop:Harnack} part a):

\smallskip

Let $u \in \underline{\mathcal S}_\Lambda (K \cap B_1)$, $u \ge 0$, and $x_0 \in \overline K\cap B_{1/2}$ and $r\le \frac 12$. If
$$ \frac{|\{u=0\} \cap (B_{r}(x_0) \cap K)|}{|B_{r}(x_0) \cap K|} \ge \mu,$$
then
\begin{equation} \label{eqn:Holder1} \|u\|_{L^\infty(B_{r/2}(x_0) \cap K)} \le (1-c(\mu)) \|u\|_{L^\infty(B_{r}(x_0) \cap K)},\end{equation}
with $c(\mu)>0$ depending on the universal constants, $\delta$ and $\mu$. Moreover, we may take$c(\mu) \to 1$ as $\mu \to 1$.

\smallskip

 We want to show that $\|u\|_{C^{0,\alpha}(B_{1/2}\cap K)} \le M$, for some large constant $M$ depending on $\|u\|_{L^\infty(B_1\cap K)}$ and $G(1)$. 

Let $x_0 \in \overline K \cap B_{1/2}$ and let $r$ denote its distance to the free boundary. It suffices to show that 
\begin{equation}\label{600}|u(x_0)| \le M' r^\alpha,
\end{equation}
as then the $C^\alpha$ bound follows by applying the interior H\"older estimates (see Lemma \ref{cal}) in balls tangent to $F(u)$ that are included in the positive or the negative phase. 

Assume that $u(x_0)>0$. In order to prove \eqref{600}, we apply the weak Harnack inequality above for either $u^+$ or $u^-$ in each dyadic ball centered at $x_0$, depending on whether or not the density of $\{u \le 0\}$ is greater than $1-\mu$, where $\mu$ is chosen such that the constant in \eqref{eqn:Holder1} satisfies $c(\mu)= \frac 78$. If in half of the cases from the ball of radius $1/2$ up to the ball of radius comparable to $r$ we apply the Harnack inequality for $u^+$ then we obtain \eqref{600} for some small $\alpha$. Otherwise $u^-$ satisfies the inequality 
$$\|u^-\|_{L^\infty(B_{2r}(x_0) \cap K)} \le M_0 r^{3/2}.$$
Now we can use the free boundary condition to obtain the desired upper bound for $u(x_0)$. 
Assume by contradiction that $u(x_0) \ge M' r^\alpha$, which implies that $r$ is small as long as we take $M'$ large. 
Then the interior Harnack inequality implies that $u \ge c u(x_0)$ in $B_{r/2}(x_0)$.
 We compare $u$ in $(B_{2r}(x_0) \setminus B_{r/2}(x_0)) \cap \overline K$ with the function
 $$ a\psi^+ - b \psi^-, \quad \quad \psi:=  \gamma^{-1} r^{\gamma+1}\left(|x-x_0|^{-\gamma} - r^{-\gamma}\right).$$
 The function $\psi$ was chosen such that $\mathcal  M^-_\Lambda (D^2 \psi) >0$, and $|\nabla \psi|=1$ on $\p B_r(x_0)$.
  The constants $a$ and $b$ are chosen such that 
  $$u \ge a\psi \quad \mbox{ on} \quad  \p B_{r/2}(x_0), \quad \mbox{and} \quad u \ge - u^- \ge - b \psi^- \quad \mbox{ on} \quad  \p B_{2r}(x_0),$$
 and hence can be chosen to satisfy $a \sim M' r^{\alpha-1}$ and $ b \sim M_0 r^{1/2}.$ Then the maximum principle implies that $u \ge a\psi^+ - b \psi^-$ and the two functions touch at some point on $F(u) \cap \p B_r(x_0)$. This is a contradiction since $a\psi^+ - b \psi^-$ is an admissible test function as the Neumann boundary conditions are satisfied along $\p K$ while on $\p B_r(x_0)$ we have $a > G(1)>G(b)$ provided that $M'$ is chosen sufficiently large.

 Finally, we may use scaling to deduce the linear dependence of $M$ on $\|u\|_{L^\infty}$ if $G(t)/t \in [\delta,\delta^{-1}]$ for $t \ge 1$. Indeed, after dividing by $1+ \|u \|_{L^\infty(B_1\cap K)}$, we reduce to the case $\|u \|_{L^\infty(B_1\cap K)} \le 1$ and $G(1) \in [\delta, \delta^{-1}]$.

\end{proof}

Next, we prove a compactness lemma for sequences of solutions to \eqref{eqn:solution} with $L^{\infty}(K\cap B_1)$ norms tending to infinity.
\begin{lem}[Compactness] \label{lem:compactness} Assume that $u_k$ is a sequence of solutions to \eqref{eqn:solution} with
$$ \mathcal F_k^+=\mathcal F_k^-=\mathcal F_k, \quad \mbox{and} \quad G_k(t)/t \in [1-\eps_k, 1+ \eps_k]  \quad \mbox{for} \quad t\ge t_k.$$ 
If $\eps_k \to 0$ and $\|u_k\|_{L^\infty(K\cap B_1)}/t_k \to \infty$, then the rescaled functions
$$\tilde u_k:= \frac{u_k}{ \|u_k\|_{L^\infty(K\cap B_1)}},$$ 
converge (up to extracting a subsequence) uniformly locally to a solution $u^*$ to the Neumann problem  \begin{equation}\label{NP*}\mathcal{F}^*(D^2u^*) = 0 \quad \text{in $K\cap B_1$,} \quad  u^*_{\nu} = 0 \quad \text{on $\pa K\cap B_1$,}\end{equation} for some $\mathcal F^* \in \mathcal E(\lambda, \Lambda).$
\end{lem}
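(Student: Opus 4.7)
The plan is to rescale by $M_k := \|u_k\|_{L^\infty(K\cap B_1)}$, use the uniform H\"older estimate of Lemma \ref{lem:Holder} to extract a convergent subsequence, and then identify the limit via the stability statement in Corollary \ref{cormain}.

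First I would verify what rescaled problem $\tilde u_k = u_k/M_k$ satisfies. The operators $\tilde{\mathcal F}_k(N) := M_k^{-1}\mathcal F_k(M_k N)$ remain in $\mathcal E(\lambda,\Lambda)$ by the observation at the end of Section 2.1, and the transmission function becomes $\tilde G_k(t) := M_k^{-1}G_k(M_k t)$. The Neumann condition $u_\nu=0$ and the homogeneous free boundary relation are preserved. Next I would show that $\tilde G_k \to \mathrm{id}$ locally uniformly on $[0,\infty)$: for any fixed $t>0$, the hypothesis $M_k/t_k \to \infty$ gives $M_k t \ge t_k$ for $k$ large, and then integrating $G_k'(s)\in[1-\eps_k,1+\eps_k]$ on $[t_k,M_k t]$ together with $G_k(t_k)/M_k \le (1+\eps_k)t_k/M_k \to 0$ yields $|\tilde G_k(t)-t| \to 0$. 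In particular, for $t\ge 1$ and $k$ large, $\tilde G_k(t)/t \in [1/2,3/2]$.

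Second, since $\|\tilde u_k\|_{L^\infty(K\cap B_1)}=1$ and $\tilde G_k(t)/t\in [1/2,3/2]$ for $t\ge1$, the second part of Lemma \ref{lem:Holder} applies uniformly in $k$, giving
\[\|\tilde u_k\|_{C^{0,\alpha}(B_{1/2}\cap K)} \le C,\]
with $C$, $\alpha$ independent of $k$. Arzel\`a--Ascoli then yields a subsequence $\tilde u_k \to u^*$ uniformly on compact subsets of $\overline K\cap B_1$. A diagonal extraction, together with the compactness of $\mathcal E(\lambda,\Lambda)$ under pointwise convergence on a countable dense subset of $\mathcal S^{n\times n}$, produces a further subsequence along which $\tilde{\mathcal F}_k \to \mathcal F^* \in \mathcal E(\lambda,\Lambda)$ uniformly on compact subsets of $\mathcal S^{n\times n}$.

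Finally I would invoke Corollary \ref{cormain}: the $\tilde u_k$ solve the two-phase Neumann problem \eqref{fbn1} with operators $\tilde{\mathcal F}_k^+ = \tilde{\mathcal F}_k^- = \tilde{\mathcal F}_k$ and transmission functions $\tilde G_k$; since the limiting transmission is the identity and the two limiting operators coincide with $\mathcal F^*$, Corollary \ref{cormain} gives precisely that $u^*$ solves the Neumann problem \eqref{NP*}. The main conceptual point to verify carefully is the applicability of this corollary — i.e. that the partial stability in Lemma \ref{SS} upgrades to full stability in the present limit. This holds here because the limiting $G(t)=t$ satisfies $G(0)=0$, which as noted in the paragraph after Lemma \ref{SS} removes the only obstruction (comparison with supersolution test functions having $b=0$). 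Hence the limit problem has no surviving free-boundary condition and $u^*$ satisfies both the interior equation across $\{u^*=0\}$ and the Neumann condition on $\partial K\cap B_1$, as required.
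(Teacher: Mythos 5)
Your proof follows essentially the same route as the paper's three-line argument: rescale by $M_k=\|u_k\|_{L^\infty}$, note the rescaled operators stay in $\mathcal E(\lambda,\Lambda)$ and the rescaled transmission function converges to the identity, use Lemma \ref{lem:Holder} for uniform H\"older bounds and extract subsequences, then identify the limit via Corollary \ref{cormain} (whose applicability you correctly trace to the $G(0)=0$ observation after Lemma \ref{SS}). One small slip in your argument: to show $\tilde G_k\to\mathrm{id}$ you integrate a bound $G_k'(s)\in[1-\eps_k,1+\eps_k]$, but that is the hypothesis of Theorem \ref{thm:Lipschitz}, not of this lemma — the lemma only assumes $G_k(t)/t\in[1-\eps_k,1+\eps_k]$ for $t\ge t_k$, and $G_k$ need not be differentiable. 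The actual hypothesis gives the convergence even more directly: for fixed $t>0$ and $k$ large enough that $M_k t\ge t_k$, one has $\tilde G_k(t)=\bigl(G_k(M_k t)/(M_k t)\bigr)\cdot t\in[(1-\eps_k)t,(1+\eps_k)t]$, so $|\tilde G_k(t)-t|\le\eps_k t\to 0$ locally uniformly. With that correction the proof is sound and matches the paper's.
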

\begin{proof}
Setting $C_k = \|u_k\|_{L^\infty(K\cap B_1)}$, we define $$\tilde{\mathcal{F}}_k = \tfrac{1}{C_k}\mathcal{F}_k(C_k\cdot), \quad \quad  \tilde{{G}}_k(t) =\tfrac{1}{C_k}G_k(C_kt).$$ Then, $\tilde{u}_k$ solve the two-phase Neumann problem \eqref{eqn:solution} in the viscosity sense, with $\mathcal{F}^{\pm} = \tilde{\mathcal{F}}_k$ and $G = \tilde{G}_k$. After passing to a subsequence (and using the regularity from Lemma \ref{lem:Holder}) we have $\tilde{u}_k\to u^*$, $\tilde{\mathcal{F}}_k \to\mathcal{F}^*$, and $\tilde{{G}}_k\to G^*$, with $G^*(t) = t$, uniformly on compacts. Now the conclusion follows from Corollary \ref{cormain}. 
\end{proof}

A key step in the proof of Theorem \ref{thm:Lipschitz} is to show that if the $L^{\infty}(K\cap B_1)$ norm of a solution is sufficiently large, then either $u$ decays linearly as we approach a point on $F(u)\cap  \pa K$, or it is close to a two-plane solution.
\begin{lem} [Dichotomy] \label{lem:dichotomy} 
Assume we are in the setting of Theorem \ref{thm:Lipschitz}. Then, given $\eps>0$, there exist constants $r$, $\eps_0$ (see \eqref{eps0}) depending on $\eps$ and $n$, $\lambda, \Lambda$, $\delta$, and a large constant $M$ depending also on $t_0$ such that if  $\|u\|_{L^\infty(K\cap B_1)} \ge M$, then either

i) $$ \frac 1 r \|u\|_{L^\infty(K \cap B_r)} \le \frac 1 2 \|u\|_{L^\infty(K\cap B_1)},$$
or

ii) $$ \frac 1 r \| \tilde u- U_{b}\|_{L^\infty(K \cap B_r)}\leq \eps, \quad \quad  \tilde u=u/\|u\|_{L^\infty(K\cap B_1)}.
$$
Here $U_b$ denotes a two-plane solution $$U_{b}(x) := a (x\cdot \omega)^+ - b(x\cdot \omega)^-, \quad |\omega|=1, \quad \omega \in \{0\}\times \R^m,$$
for the rescaled problem \eqref{eqn:solution} that $\tilde u$ satisfies, and
$$a=\tilde G(b), \quad \quad b \in [\tfrac 14, C], \quad \quad \tilde G'(t) \in [1-\eps,1+\eps] \quad \mbox{for} \quad t \ge \tfrac 18,$$
with $C$ depending on $n$, $\lambda$, $\Lambda$, $\delta$, and with $\tilde{G}(t) = G(\|u\|_{L^{\infty}}t)/\|u\|_{L^{\infty}}$.

\end{lem}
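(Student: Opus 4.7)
The plan is to run a compactness-contradiction argument, funneling the hypothesis $G'\to 1$ at infinity through the rescaling $\tilde u = u/\|u\|_{L^\infty}$ into the boundary $C^{1,\alpha}$ theory of Section \ref{sec:Neumann}. Suppose the dichotomy fails for some $\eps>0$. Then for any prescribed $r, \eps_0, M$ there is a counterexample, so I extract a sequence $u_k$ of solutions of \eqref{eqn:solution} with $\mathcal F_k^+=\mathcal F_k^-$, transmission functions $G_k$ satisfying $G_k'(t)\in[1-\eps_{0,k},1+\eps_{0,k}]$ for $t\ge t_0$ with $\eps_{0,k}\to 0$, and $C_k:=\|u_k\|_{L^\infty(K\cap B_1)}\to\infty$, such that neither (i) nor (ii) holds at a scale $r$ which I will choose small at the end.

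Setting $\tilde u_k=u_k/C_k$, Lemma \ref{lem:compactness} produces, along a subsequence, uniform convergence $\tilde u_k\to u^*$ on compact subsets of $\overline K\cap B_1$, where $u^*$ solves the Neumann problem \eqref{NP*} for some $\mathcal F^*\in\mathcal E(\lambda,\Lambda)$; in parallel the rescaled $\tilde G_k(t)=G_k(C_kt)/C_k$ converges to the identity uniformly on sets bounded away from zero. Since $0\in F(u_k)$ forces $u_k(0)=0$ by continuity, $u^*(0)=0$, and Theorem \ref{T08} supplies a tangential vector $\tau_m\in\{0\}\times\R^m$ with
\[
|u^*(x)-\tau_m\cdot x|\le C|x|^{1+\alpha},\qquad x\in\overline K\cap B_{1/2},
\]
with $|\tau_m|$ universally bounded. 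The failure of (i) passes to the limit as $\|u^*\|_{L^\infty(K\cap B_r)}\ge r/2$, and combined with the expansion this forces $|\tau_m|\ge 1/2-Cr^\alpha\ge 1/4$ once $r$ is taken small.

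Next I would manufacture the two-plane solution. Take $b:=|\tau_m|$ and $\omega:=\tau_m/|\tau_m|\in\{0\}\times\R^m$, and set
\[
U_b(x):=\tilde G_k(b)(x\cdot\omega)^+ - b(x\cdot\omega)^-,
\]
which is a genuine two-plane solution of the rescaled problem (with $\mathcal F^*$ and $\tilde G_k$): the Neumann condition on $\partial K$ is automatic because $\omega$ is tangential, and the free-boundary relation $a=\tilde G_k(b)$ is built in. By the bounds on $|\tau_m|$ one has $b\in[1/4,C]$, and $\tilde G_k'(t)\in[1-\eps,1+\eps]$ on $[1/8,\infty)$ for large $k$. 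Using $\tau_m\cdot x = b(x\cdot\omega)^+ - b(x\cdot\omega)^-$ to bridge, the triangle inequality gives
\[
\|\tilde u_k - U_b\|_{L^\infty(K\cap B_r)}\le \|\tilde u_k-u^*\|_{L^\infty(K\cap B_r)} + Cr^{1+\alpha} + |\tilde G_k(b)-b|\,r,
\]
whose right-hand side, divided by $r$, can be driven below $\eps$ by first choosing $r$ small (absorbing $Cr^\alpha$) and then $k$ large (absorbing the uniform-convergence and transmission-convergence errors). This contradicts the failure of alternative (ii) and closes the argument.

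The main technical obstacle is guaranteeing that the compactness limit $u^*$ solves the \emph{pure} Neumann problem on the full cone, rather than a transmission problem of the type studied in Section 5 across a persistent free boundary $\{u^*=0\}$. This is exactly where the smallness hypothesis $\eps_0\to 0$ enters, via Corollary \ref{cormain}: it forces $\tilde G_k\to\mathrm{id}$ uniformly on compacts and collapses the two-phase free-boundary condition into a no-jump condition on the gradient. Once this collapse is in hand, Theorem \ref{T08} at the corner point of $\partial K$ does the heavy lifting, upgrading the non-decay of the rescaled $L^\infty$ norm into a nontrivial tangential linear slope, which is the precise datum needed to write down the matching two-plane solution.
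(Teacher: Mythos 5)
Your proof is correct and follows essentially the same route as the paper: a compactness–contradiction argument in which Lemma~\ref{lem:compactness} produces a limit $u^*$ solving the pure Neumann problem~\eqref{NP*}, Theorem~\ref{T08} supplies the expansion $|u^*(x)-\tau_m\cdot x|\le C|x|^{1+\alpha}$ with $\tau_m$ tangential, and the two-plane competitor $U_b$ is built with $b=|\tau_m|$, $\omega=\tau_m/|\tau_m|$, $a=\tilde G_k(b)$. The only cosmetic difference is that the paper runs an explicit two-case split on $|\tau_m|\lessgtr 1/4$, whereas you fold the first case away by deriving $|\tau_m|\ge 1/4$ directly from the failure of alternative~(i); these are logically equivalent.
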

\begin{proof}
We use an analogous proof to Proposition 2.4 in \cite{DS}. The proof is by compactness. Fix constants $r$, $C$ to be specified below. Suppose that there exist a sequence $\eps_k \to 0$ and solutions $u_k$, with $M_k = t_0 \eps_k^{-1}$, $\eps_0=\eps_k$ for which neither option holds. Applying Lemma \ref{lem:compactness}, we obtain a limit $u^*$ for the $\tilde u_k= u_k/\|u_k\|_{L^{\infty}}$, which solves the Neumann problem \eqref{NP*}.

Applying Theorem \ref{T08} to $u^*$, we obtain\begin{align} \label{eqn:dichotomy1}
|u^*(x) - \tau_m\cdot x| \leq C|x|^{1+\alpha},
\end{align}
in $K \cap B_1$, with $\tau_m\in \{0\}\times \R^m$, $|\tau_m|\leq C$. We now split into two cases, depending on the size of $|\tau_m|$. 

\smallskip

\textit{Case 1.} $|\tau_m| \leq \tfrac{1}{4}$.

Here, in $K\cap B_r$, we have $$\tfrac{1}{r}|u^*| \leq \tfrac{1}{4} + Cr^\alpha \leq \tfrac{1}{3},$$ by choosing $r$ sufficiently small. Therefore, the first option holds for $u_k$ with $k$ sufficiently large, giving a contradiction. (Note that if $m = 0$, then $\tau_m=0$, and so we are automatically in this case, and there is no dichotomy.)

\smallskip

\textit{Case 2.} $|\tau_m| > \tfrac{1}{4}$.

Then, $$\tfrac{1}{r}|u^* - x\cdot\tau_m| \leq Cr^{\gamma},$$ in $K\cap B_r$ and we choose $r$ sufficiently small so that $Cr^{\gamma} \leq \tfrac{1}{2}\eps$. 

We pick $\omega = \tau_m/|\tau_m|$, $b = |\tau_m|$ and $a = \tilde G_k(b)$, with
$$\tilde{G}_k(t) = G_k(\|u_k\|_{L^{\infty}}t)/\|u_k\|_{L^{\infty}}.$$ Using that $\tilde{G}_{k}(t)$ converges to the identity on compacts, the second option holds for $u_k$ for $k$ sufficiently large, again giving a contradiction.
\end{proof}

If $u$ satisfies the second estimate for a sufficiently small $\eps$, then it satisfies a similar conclusion as that of Theorem \ref{T09}. Precisely, the following estimate holds:

\begin{prop}\label{prop:improvement}
Suppose that $\tilde u$ satisfies the second alternative of Lemma \ref{lem:dichotomy} for a sufficiently small $\eps>0$ depending on $n$, $\lambda$, $\Lambda$, $\delta$. 

Then, there exist a unit vector $\omega_0 \in \{0\}\times \R^m$, and positive constants $a_0$, $b_0$ with $a_0 = \tilde G(b_0)$, $b_0 \in [\frac 18, 2C]$ such that
\begin{align*}
    |\tilde u(x) - (a_0(x\cdot \omega_0)^+ - b_0(x\cdot\omega_0)^-)| \leq C\eps |x|^{1+\alpha},
\end{align*}
in $K\cap B_{r}$. Here $C$, $\alpha$ are constants depending on $n$, $\lambda$, $\Lambda$, $\delta$. 
\end{prop}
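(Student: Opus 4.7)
The plan is to argue by contradiction and compactness, mirroring the linearization strategy of \cite{DS} but using the boundary transmission regularity from Theorem \ref{T09} in place of the interior result. Suppose the conclusion fails: there exist a radius (which, after relabeling, we may take to be the $r$ from Lemma \ref{lem:dichotomy}), a sequence $\eps_k \to 0$, solutions $\tilde u_k$, and two-plane solutions $U_{b_k}(x) = a_k(x\cdot\omega_k)^+ - b_k(x\cdot\omega_k)^-$ with $a_k = \tilde G_k(b_k)$, $b_k \in [\tfrac14, C]$, $\omega_k \in \{0\}\times\R^m$, and $\tilde G_k'(t)\in[1-\eps_k,1+\eps_k]$ for $t\ge \tfrac18$, such that $\frac{1}{r}\|\tilde u_k - U_{b_k}\|_{L^\infty(K\cap B_r)} \le \eps_k$, but the $C^{1,\alpha}$-conclusion fails for every candidate two-plane solution. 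After rescaling by $r$ and rotating in the $\R^m$-subspace (which preserves the structure $K_{n-m}\times\R^m$ and the class $\{0\}\times\R^m$), we may assume $\omega_k = e_n$ and that the hypothesis holds in $K\cap B_1$ with small $\eps_k$.

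Form the linearized functions $v_k := (\tilde u_k - U_{b_k})/\eps_k$, which are uniformly bounded by $1$ in $K\cap B_1$. The two key estimates to establish are: (a) uniform Hölder continuity of $v_k$ up to $\partial K$ and across the interface $\{x_n=0\}$, and (b) that $v_k$ converges (along a subsequence) to a limit $v^*$ which solves the transmission-Neumann problem \eqref{tran} in $K\cap B_{1/2}$ for some $\mathcal F^*\in\mathcal E(\lambda,\Lambda)$. The interior subsolution/supersolution properties in $\{x_n>0\}\cap K$ and $\{x_n<0\}\cap K$ follow because $U_{b_k}$ is linear on each half (so $\mathcal F_k(D^2 U_{b_k})=0$ trivially), and the difference of solutions lies in $\mathcal S_\Lambda$. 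The Neumann condition $v_\nu=0$ on $\partial K\cap B_{1/2}\setminus\{x_n=0\}$ follows by stability (Proposition \ref{stb}), using that $U_{b_k}$ is invariant in the $K_{n-m}$-directions so $(U_{b_k})_\nu=0$ on $\partial K$. The transmission condition is the heart of the argument: linearizing $|\nabla \tilde u_k^+|=\tilde G_k(|\nabla \tilde u_k^-|)$ around the two-plane solution gives $a_k \partial_n v_k|_{x_n>0} = \tilde G_k'(b_k)\, b_k\, \partial_n v_k|_{x_n<0} + O(\eps_k)$; since $\tilde G_k'(b_k)\to 1$ and $a_k = \tilde G_k(b_k) = b_k + O(\eps_k)$ (so $a_k,b_k$ both converge to a common limit $b^*$), the limiting condition is continuity of the normal derivative, i.e., \eqref{tran} with $\gamma=1$. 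Verifying this rigorously in viscosity form (including at corner points on $\partial K\cap \{x_n=0\}$) is the main technical obstacle; I would do it by a standard touching/contradiction argument at a hypothetical contact point of a test function satisfying the strict versions of \eqref{psi1}--\eqref{psi2}, producing contact with $\tilde u_k$ that violates the free boundary condition for large $k$.

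Once $v^*$ is identified as a solution to \eqref{tran} with $\gamma=1$, note that $v^*(0)=0$ since $\tilde u_k(0) = U_{b_k}(0) = 0$ for all $k$ (because $0\in F(\tilde u_k)$). Apply Theorem \ref{T09} to $v^*$: there exist $p,q\in\R$ and $\tau_m\in\{0\}\times\R^{m-1}\times\{0\}$ with
\[
|v^*(x) - (p x_n^+ + q x_n^- + \tau_m\cdot x)| \le C |x|^{1+\alpha}, \quad x\in K\cap B_{1/2}.
\]
By Lemma \ref{L510}, the transmission condition $\gamma=1$ forces $p + q=0$.

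Finally, use the expansion of $v^*$ to construct a new two-plane solution that improves the approximation. Set $b_0 := b_k + \eps_k q^*$ for some $q^*$ close to $q$ (chosen so that $a_0 := \tilde G_k(b_0)$ matches $a_k + \eps_k p$ to sufficient order, which is possible since $\tilde G_k'\approx 1$ and $p+q=0$), and rotate $e_n$ inside $\{0\}\times\R^m$ to $\omega_0 = (e_n + \eps_k \tau_m /b_k)/|e_n + \eps_k\tau_m/b_k|$, which lies in $\{0\}\times\R^m$ since $\tau_m$ does. A direct computation, using that $a_k - b_k = O(\eps_k)$ so the $O(\eps_k)$ mismatch between the two sides is compatible with the tangential correction $\tau_m$, shows
\[
|U_{b_k}(x) + \eps_k(p x_n^+ + q x_n^- + \tau_m\cdot x) - U_{b_0}(x)| \le C\eps_k^2 |x|,
\]
so that combining this with the $C|x|^{1+\alpha}$ error of $v^*$ yields $|\tilde u_k - U_{b_0}| \le C\eps_k |x|^{1+\alpha}$ on $K\cap B_{r}$ for small $r$. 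This contradicts the standing hypothesis that no such improvement exists, completing the proof. The most delicate step is the linearization of the free boundary condition at corner points on $\partial K\cap F(\tilde u_k)$, which is where the Neumann geometry and the transmission structure interact and where the careful boundary theory from Section \ref{sec:Neumann} is indispensable.
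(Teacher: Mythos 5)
Your overall roadmap (contradiction, compactness, linearize around the two-plane solution, apply the transmission-Neumann $C^{1,\alpha}$ result of Theorem~\ref{T09}, then rebuild an improved two-plane solution from the $(p,q,\tau_m)$ expansion) is aligned with the paper's strategy. However, there is a structural gap in how you try to get the full pointwise $C^{1,\alpha}$ estimate, and a related error in the identification of $\gamma$.

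The gap: you try to obtain the conclusion in a single compactness step, but that cannot work. Proposition~\ref{prop:improvement} is a pointwise $C^{1,\alpha}$ estimate, uniform over all scales $|x|\le r$. Your compactness argument produces a limit $v^*$ of the linearized functions $v_k$, and Theorem~\ref{T09} gives $|v^*(x)-(px_n^++qx_n^-+\tau_m\cdot x)|\le C|x|^{1+\alpha}$. But the convergence $v_k\to v^*$ is only uniform, with some rate $\delta_k\to0$ unrelated to $|x|^{1+\alpha}$; transferring the estimate back to $\tilde u_k$ gives $|\tilde u_k(x)-U_{b_k}(x)-\eps_k(px_n^++qx_n^-+\tau_m\cdot x)|\le \eps_k(\delta_k+C|x|^{1+\alpha})$, where the additive $\eps_k\delta_k$ dominates near $x=0$. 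So this argument only yields an improved flatness estimate at a \emph{single} fixed scale $\rho$ (choose $k$ so that $\delta_k\le\rho^{1+\alpha}$), not a $C^{1,\alpha}$ bound down to $x=0$. The paper resolves this by first isolating a single-scale improvement-of-flatness lemma (Lemma~\ref{impf}), and then proving Proposition~\ref{prop:improvement} by iterating it indefinitely. You must either prove Lemma~\ref{impf} and iterate, or be explicit about the iteration inside your compactness argument; as written, the final contradiction does not follow from one pass.

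A related point on $\gamma$: you assert the limiting transmission condition has $\gamma=1$, relying on $\tilde G_k'\to 1$, and then invoke ``$p+q=0$'' from Lemma~\ref{L510}. This only makes sense within your (flawed) one-shot compactness with $\eps_k\to0$. In the paper's single-scale improvement lemma, $\eps$ is not sent to zero for a fixed scale and the limiting transmission condition carries $\gamma = bG'(b)/a$ (with $a=G(b)$), which is close to but not equal to $1$; Lemma~\ref{impf} and Lemma~\ref{L510} are stated for general $\gamma\in[\delta,\delta^{-1}]$ and give $p+\gamma q=0$, not $p+q=0$. Your scalar normalization $v_k:=(\tilde u_k-U_{b_k})/\eps_k$ is also different from the paper's piecewise normalization (dividing by $a_k\eps_k$ on the positive phase and $b_k\eps_k$ on the negative phase, as in \eqref{tilde}); the piecewise choice is what makes the limiting transmission condition take exactly the form $a\,\tilde u_n^+ + bG'(b)\,\tilde u_n^-=0$ and matches Section 5 cleanly. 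Finally, the ``main technical obstacle'' you flag, verifying the transmission condition in viscosity form at corner points of $\p K\cap\{x_n=0\}$, is precisely where the paper invests its effort in Step 2 of Lemma~\ref{impf}'s proof, constructing the explicit radial barrier $\Gamma$ and its rescalings $\Gamma_k$; a bare appeal to ``a standard touching argument'' is not sufficient there because the test functions must simultaneously satisfy the curvature, transmission, and Neumann constraints.
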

 The conclusion implies that $|\tilde u(x)| \le C |x|$ in $K\cap B_{r}$, which gives 
 \begin{equation}\label{lasteq}
|u(x)| \le C \|u\|_{L^\infty(K \cap B_1)} |x| \quad \quad \forall \, x \in K \cap B_r,
\end{equation}
 and that $F(u)$ intersects $\p K$ orthogonally at the origin.
 
The proof of Proposition \ref{prop:improvement} follows from iterating the general {\it improvement of flatness} result given in Lemma \ref{impf} below.  
We first complete the proof of Theorem $\ref{thm:Lipschitz}$.

\begin{proof1}{Theorem \ref{thm:Lipschitz}}
The argument given in \cite{DS} continues to hold, with the first estimate in Lemma \ref{lem:dichotomy} and Proposition \ref{prop:improvement} above used in place of Proposition 2.4 from \cite{DS}. Precisely, let 
$$a(s):=s^{-1}\|u\|_{L^{\infty}(K\cap B_s)},$$
and for the Lipschitz regularity of $u$ at the origin we need to show that 
\begin{align} \label{eqn:Lipschitz1}
    a(r^k) \leq C\max\{\|u\|_{L^{\infty}(K\cap B_1)},M\}, \quad \quad \forall \, k \ge 0,
\end{align}
 with $r$ and $M$ as in Lemma \ref{lem:dichotomy}, and $C$ a large constant.
 
 We prove \eqref{eqn:Lipschitz1} by iterating Lemma \ref{lem:dichotomy}. For $C$ chosen greater than $1$, \eqref{eqn:Lipschitz1} holds for $k=0$. 

 If $a(r^k) \le M$ then we have $$a(r^{k+1}) \le C_0M,$$ for $C_0 = r^{-1}$. 
 
If $a(r^k) \ge M$, then we apply Lemma \ref{lem:dichotomy} to the rescaling $$u_k(x) = u(r^kx)/r^k.$$ Notice that the cone $K$ and the bounds on the function $G$ remain invariant under this rescaling. If the first alternative holds for $u_k$, then $$a(r^{k+1}) \le \frac 12 a(r^k),$$
and we continue with the iteration. 

Finally, if we end up in the second alternative for some value $k=k_0$, then we stop the iteration and apply \eqref{lasteq} to conclude
$$a(r^l) \le C_1 a(r^{k_0}), \quad \forall \, \, l \ge k_0,$$ 
for some $C_1$ large. 

Now \eqref{eqn:Lipschitz1} follows easily by choosing $C$ large depending on $C_0$ and $C_1$.
\end{proof1}

Proposition \ref{prop:improvement} follows from applying the following lemma indefinitely.
\begin{lem}[Improvement of flatness] \label{impf}
Let $u$ be a solution of \eqref{eqn:solution}, and assume that in a compact interval $I \subset (0,\infty)$, $G$ has the properties
$$ G(t), G'(t) \in [\delta, \delta^{-1}], \quad \quad |G'(t_1)-G'(t_2)| \le \eps_0 + \delta^{-1}|t_1-t_2|^\delta.$$
Suppose further that $0 \in F(u)$ and $u$ satisfies the flatness assumption
\begin{align*}
    U_b(x_n-\eps)  \leq u(x) \leq U_b(x_n+\eps) \quad\text{in }B_1\cap K, 
\end{align*}
with $U_b$ a two-plane solution with $(b-\delta, b + \delta) \subset I$.

There exist constants $\eps_0$, $r$, $C_0$ depending only on $n$, $\lambda$, $\Lambda$, $\delta$, $I$, such that if $\eps\leq \eps_0$ then
\begin{align*}
    U_{b'}(x\cdot \omega -r\eps/2)  \leq u(x) \leq U_{b'}(x\cdot \omega+r\eps/2) \quad\text{in }B_r\cap K,
\end{align*}
with $\omega \in \{0\}\times \R^m$, $|\omega|=1$, $|\omega-e_n|<C_0\eps$, $|b'-b|<C_0 \eps$.
\end{lem}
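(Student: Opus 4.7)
\begin{proof1}{Lemma \ref{impf} (sketch of approach)}
The plan is the standard compactness-linearization scheme, using Theorem \ref{T09} as the improved regularity for the linearized limit. Suppose by contradiction that the conclusion fails: there exist $\eps_k \to 0$, convex cones $K_k = K_{n-m,k}\times\R^m$ satisfying \eqref{KK}-\eqref{conco}, operators $\mathcal F_k \in \mathcal E(\lambda,\Lambda)$, functions $G_k$ as in the statement, parameters $b_k\in I$ (which after passing to a subsequence converge to some $b^*$), and solutions $u_k$ of \eqref{eqn:solution} with
$$ U_{b_k}(x_n - \eps_k) \le u_k(x) \le U_{b_k}(x_n + \eps_k) \quad \mbox{in } K_k\cap B_1,$$
but no choice of $(\omega, b')$ in the prescribed range yields the improved flatness in $K_k\cap B_r$ for the value $r$ to be determined.

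First I would introduce the linearizing rescaling
$$\tilde u_k(x) := \begin{cases} (u_k(x) - a_k x_n)/\eps_k, & x\in B_1^+(u_k),\\ (u_k(x) - b_k x_n)/\eps_k, & x\in B_1^-(u_k), \end{cases}$$
where $a_k = G_k(b_k)$, and extend $\tilde u_k$ to the free boundary by continuity. The flatness hypothesis gives $\|\tilde u_k\|_{L^\infty}\le C$ and forces $F(u_k)$ to lie in the strip $|x_n|\le \eps_k$. The next step is a partial Harnack inequality for $\tilde u_k$ proved exactly as in \cite{DS}: arguing in dyadic balls centered at interior points and using comparison with the radial sub/supersolutions from \eqref{Fsub} (which remain admissible test functions on $\p K_k$ because their gradients point radially, and are in particular acceptable for the Neumann condition on $\p K_k$), one obtains a uniform, $\eps$-independent oscillation decay for $\tilde u_k$ on compact subsets of $\overline{K_k}\cap B_{1/2}$. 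By Arzela-Ascoli, up to a subsequence, $\tilde u_k$ converges locally uniformly (in the Hausdorff sense on graphs) to a limit $\tilde u_\infty \in C(\overline{K_\infty}\cap B_{1/2})$, while $K_k\to K_\infty = K_{n-m,\infty}\times \R^m$ (still a cone in the class \eqref{conco}), $\mathcal F_k\to \mathcal F_\infty$, and $G_k$ locally converges to $G_\infty$ with $G'_\infty(b^*)$ well-defined in $[\delta,\delta^{-1}]$.

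Next I would identify $\tilde u_\infty$ as a viscosity solution of the transmission-Neumann problem \eqref{tran} in $K_\infty\cap B_{1/2}$, for the linearized operator $\mathcal L^\pm := D\mathcal F_\infty(0)$ and with transmission coefficient $\gamma := G'_\infty(b^*)$. The interior equations on $\{x_n > 0\}$ and $\{x_n < 0\}$ follow from stability in the rescalings $\mathcal F_{k,\eps_k}$. The Neumann condition on $\p K_\infty\cap \{x_n\neq 0\}$ passes to the limit by Proposition \ref{stb} applied on each side of $\{x_n = 0\}$ separately. The transmission condition at $\{x_n = 0\}$ is obtained via the standard two-plane linearization: one checks that a piecewise $C^2$ test function $\psi$ with $\psi_n^+ + \gamma \psi_n^- < 0$ cannot touch $\tilde u_\infty$ by above, since otherwise $U_{b_k} + \eps_k \psi$ (suitably perturbed) would touch $u_k$ by above at a free boundary point and violate the subsolution property of $u_k$ along $F(u_k)$, the expansion
$$|\nabla u_k^+| - G_k(|\nabla u_k^-|) = \eps_k\bigl(\psi_n^+ + G'_k(b_k)\psi_n^-\bigr) + o(\eps_k)$$
delivering the strict inequality needed. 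The boundary variant on $\p K_\infty\cap\{x_n=0\}$ is handled analogously, combining the free-boundary subsolution property and the Neumann boundary property, as set up in Subsection \ref{bo}. The main obstacle in the whole argument lies precisely here: to obtain the full viscosity transmission condition at the limit, and in particular the comparison with touching functions having $b=0$ at $\p K$, one needs the nondegeneracy of $u_k^\pm$ along $F(u_k)$, which is available because $b_k\ge \delta$ (so both phases are nondegenerate near the free boundary, by the two-sided flatness).

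Once $\tilde u_\infty$ is known to solve \eqref{tran} with $\gamma\in[\delta,\delta^{-1}]$ in the cone $K_\infty = K_{n-m,\infty}\times\R^m$, Theorem \ref{T09} applies at the origin and yields
$$ |\tilde u_\infty(x) - \tilde u_\infty(0) - (p x_n^+ + q x_n^- + \tau_m\cdot x)|\le C|x|^{1+\alpha},\qquad x\in \overline{K_\infty}\cap B_{1/2},$$
with $p + \gamma q = 0$ and $\tau_m\in\{0\}\times\R^{m-1}\times\{0\}$. Fixing $r$ small so that $C r^{1+\alpha}\le r/4$, and setting
$$ b' := b_k + \eps_k q_k,\qquad \omega := (e_n + \eps_k\tau_m/b_k)/|\cdot|,$$
where $q_k, p_k, \tau_{m,k}$ are the approximate coefficients recovered from $\tilde u_k$ at scale $r$, one verifies that the transmission relation $p_k + G'_k(b_k)q_k = o(1)$ is exactly the condition $a_k + \eps_k p_k = G_k(b_k + \eps_k q_k) + o(\eps_k)$, which ensures that $U_{b'}$ is a valid two-plane solution for the rescaled free-boundary problem. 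Unwinding the definition of $\tilde u_k$, the bound on $\tilde u_\infty$ translates to
$$ U_{b'}(x\cdot\omega - r\eps_k/2)\le u_k(x)\le U_{b'}(x\cdot\omega + r\eps_k/2)\quad\mbox{in } K_k\cap B_r,$$
for all large $k$, contradicting the failure hypothesis. This closes the argument and produces the constants $\eps_0, r, C_0$ depending only on $n,\lambda,\Lambda,\delta,I$.
\end{proof1}
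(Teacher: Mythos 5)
Your overall approach — compactness, a partial Harnack inequality to extract a uniformly convergent subsequence, identification of the limit as a solution of the transmission-Neumann problem, applying Theorem \ref{T09} for a pointwise $C^{1,\alpha}$ expansion, and unwinding — is the same scheme the paper follows. But your linearizing rescaling has a genuine defect that breaks the argument.

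You divide by $\eps_k$ alone; the paper divides by $a_k\eps_k$ on $B_1^+(u_k)$ and by $b_k\eps_k$ on $B_1^-(u_k)$. With your normalization $\tilde u_k$ does \emph{not} extend continuously across $F(u_k)$: writing the free boundary as $\{x_n = \eps_k g_k(x')\}$ with $|g_k|\le 1$, the one-sided values of $\tilde u_k$ on $F(u_k)$ are $-a_k g_k(x')$ from the positive side and $-b_k g_k(x')$ from the negative side, which differ whenever $a_k\neq b_k$ and $g_k\neq 0$. So ``extend $\tilde u_k$ to the free boundary by continuity'' is not well-posed, the limit $\tilde u_\infty$ is not in the class considered in Section 5, and Theorem \ref{T09} cannot be invoked. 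The same defect resurfaces in the unwinding step: with your normalization the positive phase of $u_k$ has gradient direction $\propto e_n + \eps_k\tau_m/a_k + O(\eps_k^2)$ while the negative phase gives $\propto e_n + \eps_k\tau_m/b_k + O(\eps_k^2)$, a discrepancy of order $\eps_k$. No single unit vector $\omega$ can then sandwich both phases with the stated slack $r\eps_k/2$ in $B_r$, since the mismatch $\eps_k|\tau_m|\,|1/a_k - 1/b_k|\,r$ is generally not $\le r\eps_k/2$. With the paper's normalization both phases produce $\omega \propto e_n + \eps_k\tau_m + O(\eps_k^2)$, which is exactly what makes a single $\omega$ work.

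There is also a sign slip in your conclusion: with your normalization the negative-phase slope is $b_k - \eps_k q$ (since $\partial_{x_n}\tilde u_\infty = -q$ on $\{x_n<0\}$), so $b'$ should be $b_k - \eps_k q_k$ rather than $b_k + \eps_k q_k$; with the paper's normalization one gets $b' = b_k(1-\eps_k q)$, which is the formula in the text. The fix is standard — adopt the paper's two-sided normalization from equation \eqref{tilde}, which forces continuity of $\tilde u_k$ across $F(u_k)$ and a consistent $\omega$ — but as written the proof does not close.
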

\begin{proof}
    This lemma is the extension of the non-degenerate improvement of flatness Lemma 5.1 in \cite{DFS1} to the boundary of the cone $K$. The same strategy of proof works here. For completeness, we sketch here the main changes for our setting.

    The proof proceeds by contradiction, by assuming a sequence $\eps_k \le \eps_{0,k}\to 0$, solutions $u_k$, operators $\mathcal{F}_k\in \mathcal{E}(\lambda,\Lambda)$, functions $G_k$, and constants $b_k$ satisfying the hypotheses of the lemma but not the conclusion. Notice that the properties of $G$ imply that for any fixed $t$,
    \begin{equation}\label{gex}
    G_k( b_k+ \eps_k t) = G_k(b_k) + G_k'(b_k) \eps_k t + o(\eps_k).
    \end{equation}

\smallskip

    \textit{Step 1.} The first step is a compactness argument. Setting $a_k = G_k(b_k)$, we define $\tilde{u}_k$ by
    \begin{equation}\label{tilde}
    \tilde{u}_k(x) = 
    \begin{cases}
        \displaystyle{\frac{u_k(x)-a_kx_n}{a_k \eps_k}},& \quad x\in B_1^+(u_k)\cup F(u_k) \\
        
        \ \\
        
         \displaystyle{\frac{u_k(x)-b_kx_n}{b_k\eps_k}},& \quad x\in B_1^-(u_k).
    \end{cases}
    \end{equation}
    The flatness hypothesis then ensures that $-1\leq \tilde{u}_k\leq 1$ in $B_1\cap K$, and that $F(u_k)$ converges to $B_1\cap\{x_n=0\}\cap K$. As in Lemma 5.1 in \cite{DFS1}, we want to conclude that (up to a subsequence) 
    $$\tilde{u}_k \to  \tilde{u} \quad \mbox{uniformly in} \quad \overline K \cap B_{1/2}.$$

     In \cite{DFS1} this follows from a non-degenerate Harnack inequality, see Theorem 4.1 in \cite{DFS1}. The same proof carries through in our setting. It uses the classical Harnack inequality which by Proposition \ref{prop:Harnack} holds in the Neumann boundary condition case as well, and barriers function of the type 
     $$a\psi^+ - b \psi^- \quad \mbox{ with} \quad \psi=x_n \pm \eps w,$$ with $w$ defined in the annulus $A = B_{3/4}(\bar{x})\backslash \bar{B}_{1/20}(\bar{x})$ by,
    \begin{align*}
        w(x) = c(|x-\bar{x}|^{-\gamma} - (3/4)^{-\gamma}), \quad \quad \bar x \in K \cap B_{1/4}^+.
    \end{align*}
       The constant $\gamma$ is chosen suitably large so that $\mathcal{M}^{-}_{\frac \lambda n, \Lambda}(D^2w)>0$. 
       
       As shown in the previous sections, these barriers remain suitable for the Neumann boundary condition on $\p K$. This is because the cone $K$ is invariant in the $e_n$ direction and $\nabla w(x)$ has the direction of $\bar{x}-x$, hence $$\nabla \psi(x)=e_n \pm \eps \nabla w(x), \quad \quad x \in \p K,$$ points inside/outside the tangent cone $K_x$. Then the proof of Theorem 4.1 in \cite{DFS1} applies in our setting.

  From our hypotheses we may also assume (after passing to a subsequence) the convergence of the operators on compact sets
    $$\mathcal{F}_k^{+} = \frac{1}{a_k\eps_k}\mathcal{F}_k(a_k\eps_k\cdot) \to \tilde {\mathcal F}^+, \quad \quad \mathcal{F}_k^{-} = \frac{1}{b_k\eps_k}\mathcal{F}_k(b_k\eps_k\cdot) \to \tilde {\mathcal F}^-,$$
    and
    $$G_k \to G, \quad G_k'\to G' \quad \mbox{uniformly in $I$}, \quad \quad b_k \to b, \quad a_k \to a=G(b).$$
   
\smallskip

    \textit{Step 2.} The next step is to show that the limiting solution $\tilde{u}$ from Step 1 satisfies the transmission problem studied in Section 5,
    $$
    \begin{cases}
             \mathcal{F}^{\pm}(D^2\tilde{u}) = 0 &\text{ in } B^{\pm}_{1/2}\cap K\cap \{x_n\neq0\} \\
             
             \  \\
             
             a \, \tilde{u}_n^+ + b G'(b)\, \tilde{u}_n^- = 0 &\text{ on } B_{1/2}\cap K \cap\{x_n=0\},
    \end{cases}$$
    together with the Neumann boundary conditions on $\pa K$.
    
    The fact that $\tilde u$ satisfies the transmission problem in the interior of $K$ was established in Lemma 5.1 in \cite{DFS1}. It remains to check the Neumann boundary conditions on $\p K$. On $\p K \cap \{x_n \ne 0\}$ this is a direct consequence of the stability property Proposition \ref{stb}. In view of Definition \ref{def5.1}, we establish the property on $\p K \cap \{x_n = 0\}$ if we show that $\tilde u$ cannot be touched by below say at $0$ by a piecewise quadratic polynomial $P$ of the form
\begin{equation}\label{Pq}
    P(x) = A+px_n^+ + qx_n^- + B  \cdot Q (x-y),
\end{equation}
with $A \in \R$, 
$$Q(x) := \frac{1}{2}(\gamma x_n^2 - |x'|^2), \quad y = (y',0), \quad y' \in K',$$ and 
\begin{equation}\label{gex2}
\mathcal M^-_\Lambda(D^2 Q)>0, \quad B>0, \quad a \, p+ b G'(b) \, q>0.
\end{equation}
   Let us assume by contradiction that $\tilde u$ is touched strictly by below at $0$ by such a quadratic polynomial $P$.

    The proof follows precisely as in the interior of $K$ in Lemma 5.1 in \cite {DFS1}. For completion we provide some of the details. 
    
    Denote by
    $$\Gamma (x)= \frac{1}{\gamma-1} \left(( |x'|^2 + (x_n-1)^2 )^{-\frac{\gamma-1}{2}} -1 \right), $$
  the radial subsolution with axis at $e_n$, which has the property that 
   $$\Gamma(x) = x_n + Q (x) + O(|x|^3),$$
   and
   $$ \mathcal M^-_{\frac \lambda n, \Lambda}(D^2 \Gamma) >0, \quad \{ \Gamma=0\}= \p B_1(e_n), \quad |\nabla \Gamma|=1 \quad \mbox{on} \quad \{\Gamma=0\}.$$
    Let $\Gamma_k$ be the following dilation and translation of $\Gamma$
    $$ \Gamma_k(x)= \frac{1}{B \eps_k} \Gamma (B \eps_k (x -y + A \eps_k e_n)),$$
    and consider the test function
    $$\phi_k := a(1+p \eps_k) \cdot \Gamma_k^+ - b(1- q \eps_k) \cdot \Gamma_k^-.$$
    We remark that $\phi_k$ is an admissible test function for the subsolution property since 
    
    1) $\nabla \Gamma_k(x)$ points in the direction of $\bar y - x$ with
    $$ \bar y= y + (\tfrac{1}{B \eps _k}-A \eps_k) e_n \quad \in K ,$$
    
    2) $\mathcal M^-_{\frac \lambda n, \Lambda}(D^2 \Gamma_k) >0$;
    
    3) $|\nabla \Gamma_k|=1$ on $\Gamma_k=0$ hence, by using \eqref{gex} and \eqref{gex2},
    $$|\nabla \phi_k^+| =a(1+p \eps_k) > G_k(b(1-q \eps_k)) = G_k(|\nabla \phi_k^-|),$$
    for all large $k$.
    
    Since
    $$\Gamma_k(x)= x_n + \eps_k (A + B \cdot Q(x-y)) + O(\eps_k^2),$$
     when we compute the corresponding function $\tilde \phi_k$ associated with $\phi_k$ (see \eqref{tilde}) we find
    $$ \tilde \phi_k = P + O(\eps_k),$$
    with $P$ as in \eqref{Pq}. This means that we can add a small constant to $\tilde \phi_k$ so that the new function touches $\tilde u_k$ by below at some point close to $0$. In turn, this means that a small translation of $\phi_k$ touches $u_k$ by below and we reach a contradiction for $k$ sufficiently large.      
     \smallskip

\textit{Step 3.} The final step in the proof of this lemma is to use the regularity properties of the limiting function $\tilde{u}$ to obtain a contradiction. Since $\tilde{u}(0) = 0$ and $|\tilde{u}|\leq 1$ in $K \cap B_1$, from Theorem \ref{T09}, we have
\begin{align*}
    |\tilde{u}(x) - (p x_n^+ + q x_n^-+\tau_m\cdot x)| \leq C|x|^{1+\beta},
\end{align*}
in $B_{\delta}\cap K$, and
$$|p|, |q|, |\tau_m| \le C, \quad ap + G'(b)b q =0, \quad \tau_m \in \{0\}\times \R^m.$$
Here $\tilde{p}$ and $\tilde{q}$ satisfy $\tilde{\alpha}G'(\tilde{\alpha}) \tilde{p} = \tilde{\beta}\tilde{q}$, and $\tau_m\cdot e_j=0$ for $j=n$ and $j\leq n-m$. Since by Step 1, $\tilde{u}_k$ converges uniformly to $\tilde{u}$, we find
$$ |\tilde u_k -  (p x_n^+ + q x_n^-+\tau_m\cdot x)| \leq \tfrac 1 4 r, \quad \mbox{in} \quad \overline K \cap B_r,$$
with $r$ small, universal.
 Setting
\begin{align*}
    b_k' = b_k(1-\eps_k q), \quad \omega_k = \frac{1}{\sqrt{1+\eps_k^2|\tau_m|^2}}(e_n + \eps_k\tau_m),
\end{align*}
it is not difficult to check that (see the final step of Lemma 5.1 in \cite{DFS1}) 
\begin{align*}
U_{b_k'}(x\cdot \omega_k- \tfrac {\eps_k}{ 2} \, r) \leq u_k(x) \leq U_{b_k'}(x\cdot \omega_k+ \tfrac{\eps_k}{2} r) \quad \quad  \text{ in }B_{r}\cap K,
\end{align*}
for sufficiently large $k$, giving a contradiction and completing the proof.
\end{proof}

\end{document}